\tikzstyle{none}=[inner sep=0pt]
\tikzstyle{rn}=[circle,fill=Red,draw=Black,line width=0.8 pt]
\tikzstyle{gn}=[circle,fill=Lime,draw=Black,line width=0.8 pt]
\tikzstyle{yn}=[circle,fill=Yellow,draw=Black,line width=0.8 pt]
\tikzstyle{simple}=[-,draw=Black,line width=2.000]
\tikzstyle{arrow}=[-,draw=Black,postaction={decorate},decoration={markings,mark=at position .5 with {\arrow{>}}},line width=2.000]
\tikzstyle{tick}=[-,draw=Black,postaction={decorate},decoration={markings,mark=at position .5 with {\draw (0,-0.1) -- (0,0.1);}},line width=2.000]
\tikzstyle{new style 0}=[draw=black, fill=white, shape=circle, minimum size=8mm]
\tikzstyle{new edge style 0}=[draw=black, fill=white]
\tikzstyle{dashed_border}=[fill=white, draw=black, shape=ellipse, dashed, tikzit shape=ellipse, minimum size=7mm]
\tikzstyle{rotated90}=[rotate=90]
\tikzstyle{rotated45}=[rotate=45]
\tikzstyle{medium_circ}=[fill=white, draw=black, shape=circle, minimum size=9mm]
\tikzstyle{shared_medium_circ}=[fill=white, draw=black, shape=circle, minimum size=9mm, minimum size=17mm, fill=gray, fill opacity=0.5, text opacity=1]
\tikzstyle{rotate-45}=[rotate=-45]
\tikzstyle{arrow}=[->]
\tikzstyle{border}=[-, dashed]
\tikzstyle{f}=[-, draw=none, fill=blue, semitransparent]
\tikzstyle{cooler_arrow}=[{|->}]
\theoremstyle{thmstyleone}%
\newtheorem{theorem}{Theorem}%
\newtheorem{proposition}[theorem]{Proposition}%
\newtheorem{lemma}[theorem]{Lemma}%
\theoremstyle{thmstyletwo}%
\newtheorem{remark}{Remark}%
\theoremstyle{thmstylethree}%
\newtheorem{definition}{Definition}%
\begin{document}

\title[Article Title]{Optimization on the Extended Tensor-Train Manifold with Shared Factors}

\author*[1]{\fnm{Alexander} \sur{Molozhavenko}}\email{amolojavenko@hse.ru}

\author[1]{\fnm{Maxim} \sur{Rakhuba}}

\affil[1]{\orgname{HSE University}}

\abstract{This paper studies tensors that admit decomposition in the Extended Tensor Train (ETT) format, with a key focus on the case where some decomposition factors are constrained to be equal.
This factor sharing introduces additional challenges, as it breaks the multilinear structure of the decomposition.
Nevertheless, we show that Riemannian optimization methods can naturally handle such constraints and prove that the underlying manifold is indeed smooth.
We develop efficient algorithms for key Riemannian optimization components, including a retraction operation based on quasi-optimal approximation in the new format, as well as tangent space projection using automatic differentiation.
Finally, we demonstrate the practical effectiveness of our approach through tensor approximation tasks and multidimensional eigenvalue problem.}

\keywords{Fixed Tensor Train rank manifold, Fixed multilinear rank manifold, Shared factor,  Riemannian optimization, Extended Tensor Train}

\pacs[MSC Classification]{53Z50,65F15,15A23,15A69}

\maketitle

\section{Introduction}\label{sec:intro}
Tensor structures play a fundamental role in modern scientific computing and machine learning applications. 
Among various approaches for handling multidimensional tensors, tensor decompositions have emerged as particularly effective tools. 
A widely used decomposition is the Tensor Train (TT) format \citep{oseledets2011tensor}, which represents a tensor as a chain of smaller three-dimensional tensors called TT-cores. 
An important modification, the Extended Tensor Train (ETT) decomposition, further separates physical indices from the TT-cores into distinct factors. 
From a structural perspective, the ETT representation can be viewed as a special case of the Hierarchical Tucker (HT) decomposition \citep{hackbusch2009new}, with specific constraints on its factorization pattern. 
The ETT decomposition proves especially useful when working with tensors exhibiting large mode sizes, offering enhanced computational efficiency.

This work focuses on a constrained version of the decomposition where we enforce equality between selected factors. We refer to this new structure as the Shared-Factor Extended Tensor Train (SF-ETT) decomposition. We demonstrate empirically that factor sharing enables more compact representations and greater computational efficiency in suitable applications. Beyond symmetric tensors, this approach reduces parameter counts when approximating tensorized (quantized) representations~\citep{approxmattt,khoromskij2011d} and other functions exhibiting similar behavior across modes, outperforming non-shared decompositions.

On the other hand, the factor equality constraint introduces algorithmic challenges. 
Most significantly, it breaks the multilinearity of the tensor format, which is fundamental to many efficient decomposition algorithms. 
For instance, the alternating least squares (ALS) approach~\citep{holtz2012alternating} typically fixes all cores except one, resulting in a linear tensor space. 
This allows for solving standard low-dimensional least squares problems to update individual cores. 
However, this property no longer holds in the shared-factor scenario.

\paragraph{Contributions} To address the aforementioned challenges, we leverage another fundamental property of tensor decompositions -- their structure as smooth manifolds. This geometric perspective enables the application of Riemannian optimization techniques. In particular, in this work, we specifically prove that the shared-factor structure maintains the smooth manifold property -- that is, the set of fixed-rank SF-ETT tensors constitutes a smooth manifold.

We derive efficient algorithms for the essential components of Riemannian optimization in the SF-ETT framework. Specifically, we develop the SF-ETT-SVD algorithm, which constructs  approximations in the SF-ETT format through an efficient SVD-based procedure and is applicable for any number of shared factors. We prove that these approximations satisfy quasi-optimality bounds, making them suitable for use as retractions in Riemannian optimization.
Another crucial component is the tangent space projection, an essential operation for computing Riemannian gradients and implementing vector transport. We derive efficient projection formulas and demonstrate how to implement it within automatic differentiation frameworks. This approach ensures both generality and broad applicability across various functionals and practical scenarios.

The resulting framework is implemented as an open-source \texttt{pytorch} \href{https://github.com/SuperCrabLover/SF-ETT-Manifold.git}{package} that naturally supports GPU parallelization and automatic differentiation. The framework is tested on several examples, including function approximation and multidimensional eigenvalue problem.

\paragraph{Outline}

Section~\ref{sec:notation} introduces the essential tensor notation and formats that are used later for the SF-ETT decomposition.
In Section~\ref{sub:sfett_format}, we introduce the SF-ETT decomposition and define the concept of SF-ETT rank. Building on this, Section~\ref{sub:sfett_svd_rounding} presents Algorithm~\ref{alg:sfett_rounding}, which constructs a quasi-optimal SF-ETT approximation via matrix truncations.

To enable efficient optimization over SF-ETT representations, we develop a Riemannian framework. Namely, Section~\ref{sub:sfett_manifold_smoothness} characterizes the smooth manifold of tensors with fixed SF-ETT rank. Section~\ref{sub:sfett_tangent_space} derives the associated tangent space structure, while Section~\ref{sub:sfett_tangent_space_proj} provides explicit formulas for the projection operator onto this space. Section~\ref{sec:gradient_based_optimization} consolidates these components into a Riemannian optimization framework, including closed-form expressions for the Riemannian gradient, numerically stable retraction and an automatic differentiation (autodiff)-based implementation for calculation of vector transport and Riemannian gradient.

Finally, in Section~\ref{sec:num_exp}, we demonstrate the efficacy of our framework through numerical experiments. Section~\ref{sub:grid_functions} presents its application to the approximation of functions on uniform grids, and Section~\ref{sub:eigenvalue_problem} showcases its use for solving eigenvalue problems.

\section{Related work}\label{sec:rel_work}
    The smooth manifold structure is inherent in various tensor decompositions, such as the Tucker decomposition~\citep{tucker1963implications},  TT~\citep{holtz2012manifolds}, and the more general tree-based HT tensor format~\citep{uschmajew2013geometry}.  
    However, this property does not hold for the CP decomposition~\citep{hitchcock1927expression} or, more generally, for tensor networks containing loops~\citep{gao2024riemannian} such as the Tensor Chain (Tensor Ring) format~\citep{khoromskij2011d,zhao2016tensor}. For more details concerning tensor formats see \citep{KHOROMSKIJ20121,kolda2009tensor,grasedyck2013literature}.
    For tensor formats that lead to smooth structures one may use the Riemannian optimization techniques~\citep{absil2008optimization,boumal2023introduction}, which lead to efficient and potentially parallelizable methods.
    
    Tensor-based Riemannian optimization approaches proved useful in different applications.
    For example, they have been used for matrix and tensor completion~\citep{kressner2014low}, linear systems~\citep{kressner2016preconditioned} as well as for the multidimensional eigenvalue problems~\citep{rakhuba2019low}.
    Low-rank Riemannian optimization has also recently emerged in machine learning applications, see~\citep{peshekhonov2024training} for factorization models and~\citep{bogachev2025riemannlora} for low-rank adaptation in large language models.
    The need for Riemannian optimization frameworks that support GPU along with automatic differentiation has led to the development of \texttt{t3f}, \texttt{ttax}, \texttt{tntorch} software packages \citep{ttax2020, t3f2017, tntorch}.
    Classic python implementation of Riemannian tools is also available in the general-purpose TT package \texttt{ttpy}~\citep{ttpy2012}.
    
    The idea of sharing factors was considered in the RESCAL model~\citep{nickel2011three}, a relaxed variant of DEDICOM~\citep{harshman1978models}, which enforces equal factors in a Tucker-2 decomposition through a heuristic approach.
	We also mention the work \citep{obukhov2020t}, which
    used shared basis for TT cores and \citep{peshekhonov2024training}, which considered shared factors in the Tucker decomposition context. 
	The authors of \citep{balavzevic2019tucker} seek for 
	the solution of the knowledge graph completion problem with shared factors and the gradient-based approach. 
    In all these works, sharedness has demonstrated significant utility in reducing the total number of parameters while effectively extracting key features from modes with analogous semantic or structural roles.

\section{Preliminaries}\label{sec:notation}
	The primary object of consideration in this work is a concept of tensor as multidimensional  
	array of real numbers. Let $ \mathcal{X} \in \mathbb{R}^{n_1\times \dotsc\times n_d}$ be a $d$-dimensional 
	tensor then by
	\begin{equation*}
		\mathcal{X}_{i_1, \dotsc, i_d} = \mathcal{X}(i_1, \dotsc, i_d) \in \mathbb{R}, \quad 
		i_j \in \overline{1, n_j}, \quad j \in \overline{1, d},
	\end{equation*}
	we denote its element in the position $i_1, \dotsc, i_d$.
    The number of elements in a tensor grows exponentially with its dimension $d$, motivating the use of tensor decompositions to achieve substantial memory reduction.
    The Frobenius tensor norm  and the scalar tensor product are defined as follows 
	\begin{equation*}
		\|\mathcal{X}\|^2_F = \sum_{i_1, \dots, i_d}^{n_1, \dots, n_d} \mathcal{X}_{i_1, \dots, i_d}^2, \quad
		\langle \mathcal{X}, \mathcal{Y} \rangle = \sum_{i_1, \dots, i_d}^{n_1, \dots, n_d} 
		\mathcal{X}_{i_1, \dots, i_d}\mathcal{Y}_{i_1, \dots, i_d}.
	\end{equation*}
	The Kronecker product is denoted as $\otimes_K$, the Moore-Penrose pseudo-inverse of matrix~$A$ is denoted as $A^\dagger$, and
	the Hadamard product (elementwise) is denoted as~$\odot$.
    
    Before presenting specific tensor formats, we first discuss various methods for reorganizing tensor elements into matrix form, as the properties of tensor decompositions fundamentally depend on these matrix representations.

\subsection{Tensor matricizations and unfoldings}
	Let us consider two ways of transforming a tensor into a matrix which shed light on multiple
	properties of  tensor decompositions. The $\mu$-th matricization $\mathcal{X}_{(\mu)} \in \mathbb{R}^{n_\mu \times \prod n_j / n_\mu }$ \citep{steinlechner2016riemannian} is a matrix with elements
	\begin{equation}\label{eq:matricization}
		\left( {\mathcal{X}_{(\mu)}} \right)_{i_\mu, i_{\neq \mu}} =  \mathcal{X}_{i_1, \dotsc, i_\mu, \dotsc, i_d},
	\end{equation}
	where
	\begin{equation*}
		i_{\neq \mu} = \sum_{\nu = 1, \nu\neq \mu}^{d}\left(  i_\nu \prod_{\tau =1, \tau \neq\mu}^{\nu - 1} n_\tau\right).
	\end{equation*}
	This way of mapping a tensor into a matrix is used in the concept of Tucker format defined below.
	Another way is a construction of the $\mu$-th unfolding of a tensor $\mathcal{X}$ denoted as
	$\mathcal{X}^{<\mu>} \in \mathbb{R}^{(n_1n_2 \dots n_\mu) \times (n_{\mu +1} \dots n_d)}$ 
	\citep{steinlechner2016riemannian}. The $\mu$-th unfolding is a matrix with elements
	\begin{equation}\label{eq:unfolding}
		\left( {\mathcal{X}^{<\mu>}} \right)_{i_{\text{row}}, i_{\text{col}}} =  \mathcal{X}_{i_1, \dotsc, i_d},
	\end{equation}
	where
	\begin{equation*}
		i_{\text{row}} = \sum_{\nu = 1}^{\mu}\left(i_\nu \prod_{\tau =1}^{\nu - 1} n_\tau \right), \quad
		i_{\text{col}} = \sum_{\nu = \mu + 1}^{d}\left(i_\nu \prod_{\tau =\mu + 1}^{\nu - 1} n_\tau\right).
	\end{equation*}
	This transformation is connected to the concept of the TT format that are also described below.
	Working with tensors in such a format implies working with $3$-dimensional tensors (TT-cores), so it is convenient 
	to use specific notation for them. Let $W \in \mathbb{R}^{n_1\times n_2\times n_3}$ be a $3$-dimensional tensor.
	Then the unfoldings $W^{<2>}$ and $W^{<1>}$ are called respectively left and right unfoldings of $W$:
	denoted as  
	\begin{equation*}
		L(W) = W^{<2>} \in \mathbb{R}^{n_1 n_2 \times n_3}, \quad R(W) = W^{<1>} \in \mathbb{R}^{n_1 \times n_2 n_3}.
	\end{equation*}
	The tensor $W$ is called left or right orthogonal if respectively
	\begin{equation*}
		L(W)^\top L(W) = I_{n_3}, \quad\text{or}\quad R(W)R(W)^\top = I_{n_1}.
	\end{equation*}

\subsection{Tensor formats}%
\label{sec:tensor_formats}
	This section is devoted to the definition of tensors format that we ues in the paper and their basic properties. 
    Tensor formats have 
	simple and useful graphical representations that are called tensor diagrams, see~Figure~\ref{fig:tens_diags}.
    A node with $d$ legs represents a $d$-dimensional tensor.
    Contraction of two tensors along an index is represented by two nodes with a common leg.
    To also denote orthogonality, we use half-filled nodes: contracting a tensor with itself along nodes from a white-filled region must produce the identity tensor.
	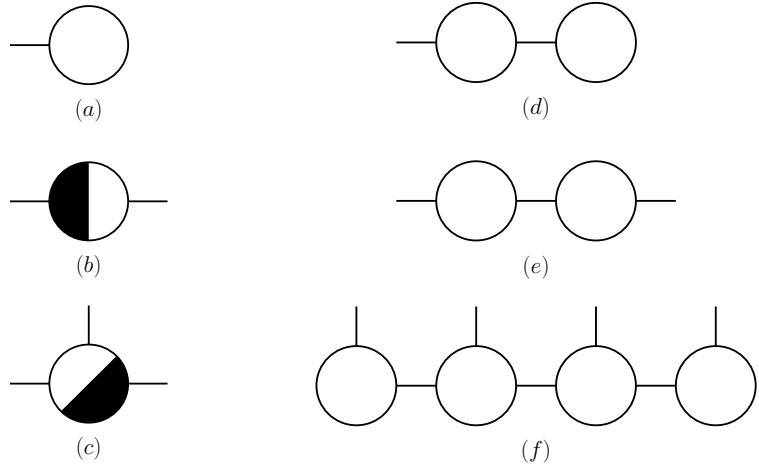
\begin{figure}[h!tp]
		\centering
		\begin{minipage}{.45\textwidth}
			\centering
			\resizebox{0.37\textwidth}{!}{
                \begin{circuitikz}
                	\draw [line width=2pt] (11.5,16.5) circle (1.5cm);
                	\draw [line width=2pt] (8.5,16.5) .. controls (9.25,16.5) and (9.25,16.5) .. (10,16.5);
                	\node [font=\Huge] at (11.5,14) {$(a)$};
                
                	\draw [line width=2pt] (11.5,10.5) circle (1.5cm);
                	\draw [line width=2pt] (8.5,10.5) .. controls (9.25,10.5) and (9.25,10.5) .. (10,10.5);
                	\draw [line width=2pt] (13,10.5) .. controls (13.75,10.5) and (13.75,10.5) .. (14.5,10.5);
                	\node [font=\Huge] at (11.5,8) {$(b)$};
                	\fill [black] (11.5,10.5) -- ++(270:1.5cm) arc (270:90:1.5cm) -- cycle;

                	\draw [line width=2pt] (11.5,3.5) circle (1.5cm);
                	\draw [line width=2pt] (10,3.5) .. controls (9.25,3.5) and (9.25,3.5) .. (8.5,3.5);
                	\draw [line width=2pt] (11.5,5) .. controls (11.5,5.75) and (11.5,5.75) .. (11.5,6.5);
                	\draw [line width=2pt] (13,3.5) .. controls (13.75,3.5) and (13.75,3.5) .. (14.5,3.5);
                	\node [font=\Huge] at (11.5,1) {$(c)$};
                	\fill [black] (11.5,3.5) -- ++(45:1.5cm) arc (45:-135:1.5cm) -- cycle;
                \end{circuitikz}
            }%
		\end{minipage}%
		\begin{minipage}{0.45\textwidth}
			\centering
			\resizebox{1\textwidth}{!}{
                \begin{circuitikz}
                	\draw [line width=2pt] (9.25,16.5) circle (1.5cm);
                	\draw [line width=2pt] (13.75,16.5) circle (1.5cm);
                	\draw [line width=2pt] (6.25,16.5) .. controls (7,16.5) and (7,16.5) .. (7.75,16.5);
                	\draw [line width=2pt] (10.75,16.5) .. controls (11.5,16.5) and (11.5,16.5) .. (12.25,16.5);
                	\node [font=\Huge] at (11.5,14) {$(d)$};
                
                	\draw [line width=2pt] (9.25,10.5) circle (1.5cm);
                	\draw [line width=2pt] (13.75,10.5) circle (1.5cm);
                	\draw [line width=2pt] (6.25,10.5) .. controls (7,10.5) and (7,10.5) .. (7.75,10.5);
                	\draw [line width=2pt] (10.75,10.5) .. controls (11.5,10.5) and (11.5,10.5) .. (12.25,10.5);
                	\draw [line width=2pt] (15.25,10.5) .. controls (16,10.5) and (16,10.5) .. (16.75,10.5);
                	\node [font=\Huge] at (11.5,8) {$(e)$};
                
                	\draw [line width=2pt] (4.75,3.5) circle (1.5cm);
                	\draw [line width=2pt] (4.75,5) .. controls (4.75,5.75) and (4.75,5.75) .. (4.75,6.5);
                	\draw [line width=2pt] (6.25,3.5) .. controls (7,3.5) and (7,3.5) .. (7.75,3.5);
                	\draw [line width=2pt] (9.25,3.5) circle (1.5cm);
                	\draw [line width=2pt] (9.25,5) .. controls (9.25,5.75) and (9.25,5.75) .. (9.25,6.5);
                	\draw [line width=2pt] (10.75,3.5) .. controls (11.5,3.5) and (11.5,3.5) .. (12.25,3.5);
                	\draw [line width=2pt] (13.75,3.5) circle (1.5cm);
                	\draw [line width=2pt] (13.75,5) .. controls (13.75,5.75) and (13.75,5.75) .. (13.75,6.5);
                	\draw [line width=2pt] (15.25,3.5) .. controls (16,3.5) and (16,3.5) .. (16.75,3.5);
                	\draw [line width=2pt] (18.25,3.5) circle (1.5cm);
                	\draw [line width=2pt] (18.25,5) .. controls (18.25,5.75) and (18.25,5.75) .. (18.25,6.5);
                	\node [font=\Huge] at (11.5,1) {$(f)$};
                \end{circuitikz}
            }%
		\end{minipage}
		\caption{Tensor diagrams: $(a)$ --- a vector, $(b)$ --- a matrix with orthonormal columns,
			$(c)$~---~ a left-orthogonal tensor,
		$(d)$ --- a matrix-vector product, $(e)$~---~a matrix-matrix product, $(f)$ --- the TT decomposition}
		\label{fig:tens_diags}
	\end{figure}
	\subsubsection{Tucker format}\label{sub:tucker_format}
		The Tucker format~\citep{tucker1963implications} of a $d$-dimensional  
		tensor $ \mathcal{X} \in \mathbb{R}^{n_1\times \dotsc\times n_d}$ with rank parameters
		$(r^t_1, \dotsc, r^t_d)$ is its representation as a contraction 
		of $d$ Tucker factors $U^{(i)} \in \mathbb{R}^{n_i \times r^t_i}, i \in \overline{1,d}$ 
		with a Tucker core $\mathcal{G} \in \mathbb{R}^{r^t_1\times \dotsc\times r^t_d}$:
		\begin{equation}\label{eq:tucker_dec}
			\mathcal{X}_{i_1, \dotsc, i_d}	= 
			\sum_{j_1, \dotsc, j_d}^{r^t_1, \dotsc, r^t_d} 
			G_{j_1, \dotsc, j_d}U^{(1)}_{i_1, j_1}\dotsc U^{(d)}_{i_d, j_d},
		\end{equation}
		see the corresponding diagram in Figure~\ref{fig:tucker_format}.
		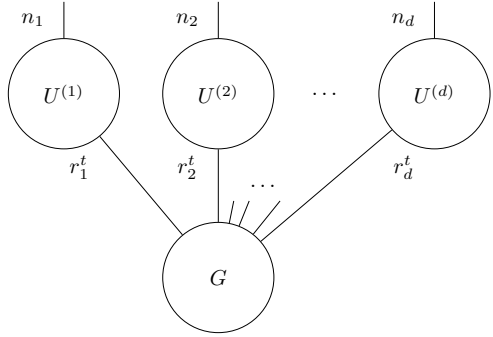
\begin{figure}[h!tp]
			\centering
			\resizebox{0.5\textwidth}{!}{
                \begin{tikzpicture}
                	\begin{pgfonlayer}{nodelayer}
                		\node [style={medium_circ}, minimum size=18mm] (0) at (-1.5, -2) {$G$};
                		\node [style={medium_circ}, minimum size=18mm] (1) at (-4, 1) {$U^{(1)}$};
                		\node [style={medium_circ}, minimum size=18mm] (2) at (-1.5, 1) {$U^{(2)}$};
                		\node [style={medium_circ}, minimum size=18mm] (4) at (2, 1) {$U^{(d)}$};
                		\node [style=none] (8) at (-4, 2.5) {};
                		\node [style=none] (9) at (-1.5, 2.5) {};
                		\node [style=none] (13) at (2, 2.5) {};
                		\node [style=none] (14) at (0.25, 1) {$\dotsc$};
                		\node [style=none] (15) at (-2, -0.20) {$r^t_2$};
                		\node [style=none] (16) at (-3.75, -0.20) {$r^t_1$};
                		\node [style=none] (18) at (1.5, -0.2) {$r^t_d$};
                		\node [style=none] (19) at (-4.5, 2.2) {$n_1$};
                		\node [style=none] (20) at (-2, 2.2) {$n_2$};
                		\node [style=none] (22) at (1.5, 2.2) {$n_{d}$};
                		\node [style=none] (23) at (-1, -0.75) {};
                		\node [style=none] (24) at (-0.75, -0.5) {$\dotsc$};
                		\node [style=none] (25) at (-1.25, -0.75) {};
                		\node [style=none] (27) at (-0.5, -0.75) {};
                	\end{pgfonlayer}
                	\begin{pgfonlayer}{edgelayer}
                		\draw (0) to (1);
                		\draw (0) to (2);
                		\draw (0) to (4);
                		\draw (1) to (8.center);
                		\draw (2) to (9.center);
                		\draw (13.center) to (4);
                		\draw (0) to (23.center);
                		\draw (0) to (25.center);
                		\draw (0) to (27.center);
                	\end{pgfonlayer}
                \end{tikzpicture}
            }
			\caption{Tucker decomposition as a tensor diagram}
			\label{fig:tucker_format}
		\end{figure}

		There exists a tuple of parameters $(r^t_1, \dotsc, r^t_{d})$ with simultaneously attained minimal possible 
		$r^t_i$. Such tuple is called the Tucker rank of a tensor $\mathcal{X}$ and the minimal 
		$r^t_i$ can be represented as a rank of the corresponding matricization 
		$\mathcal{X}_{(i)}$ of the tensor $\mathcal{X}$ defined in \eqref{eq:matricization}
		\begin{equation*}
			r^t_i = \mathrm{rank}\,(\mathcal{X}_{(i)}), \quad i \in \overline{1, d}.
		\end{equation*}
		The Tucker decomposition~\eqref{eq:tucker_dec} is further formally denoted as
		\begin{equation}\label{eq:tucker_dec_cool_notation}
			\mathcal{X}	= \left\llbracket \mathcal{G}; U^{(1)}, \dotsc, U^{(d)}\right\rrbracket.
		\end{equation}
		Note that the dimension of the Tucker core equals to the dimension of
		the tensor. As a result, this format suffers from the exponential growth of the number of parameters -- manifestation of the the curse of dimensionality.

	\subsubsection{Tensor Train format}\label{sub:tt_format}
		The TT decomposition~\citep{oseledets2011tensor,oseledets2009breaking}   of 
		a tensor $ \mathcal{X} \in \mathbb{R}^{n_1\times \dotsc\times n_d}$ with the rank
		parameters $(r^{tt}_1, \dotsc, r^{tt}_{d-1})$  
		is defined as:
		\begin{equation}\label{eq:tt_dec}
			 \mathcal{X}_{i_1, \dotsc, i_d}	=  
			  W^{(1)}(i_1) W^{(2)}(i_2)\dotsc W^{(d)}(i_d),
		\end{equation}
		where 
		\begin{equation*}
			W^{(j)}(i_j) \in \mathbb{R}^{r_{j - 1}^{tt} \times r^{tt}_j}, \quad i_j \in \overline{1, n_j}, 
			\quad j \in \overline{1, d},\quad
			r^{tt}_0 = r^{tt}_{d} = 1.
		\end{equation*}
		The corresponding tensor diagram is presented in 
		Figure~\ref{fig:tt_format}.
		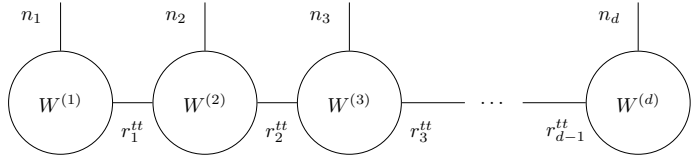
\begin{figure}[h!tp]
			\centering
			\resizebox{0.7\textwidth}{!}{
                \begin{tikzpicture}
                	\begin{pgfonlayer}{nodelayer}
                		\node [style={medium_circ}, minimum size=18mm] (0) at (-6, -0.5) {$W^{(1)}$};
                		\node [style={medium_circ}, minimum size=18mm] (1) at (-3.5, -0.5) {$W^{(2)}$};
                		\node [style={medium_circ}, minimum size=18mm] (2) at (-1, -0.5) {$W^{(3)}$};
                		\node [style=none] (3) at (1.5, -0.5) {$\dotsc$};
                		\node [style={medium_circ}, minimum size=18mm] (5) at (4, -0.5) {$W^{(d)}$};
                		\node [style=none] (6) at (1, -0.5) {};
                		\node [style=none] (7) at (2, -0.5) {};
                		\node [style=none] (8) at (-6, 1.25) {};
                		\node [style=none] (9) at (-3.5, 1.25) {};
                		\node [style=none] (10) at (-1, 1.25) {};
                		\node [style=none] (12) at (4, 1.25) {};
                		\node [style=none] (13) at (-6.5, 1) {$n_1$};
                		\node [style=none] (14) at (-4, 1) {$n_2$};
                		\node [style=none] (15) at (-1.5, 1) {$n_3$};
                		\node [style=none] (17) at (3.5, 1) {$n_d$};
                		\node [style=none] (18) at (-4.75, -1) {$r^{tt}_1$};
                		\node [style=none] (19) at (-2.25, -1) {$r^{tt}_2$};
                		\node [style=none] (20) at (0.25, -1) {$r^{tt}_3$};
                		\node [style=none] (21) at (2.75, -1) {$r^{tt}_{d-1}$};
                	\end{pgfonlayer}
                	\begin{pgfonlayer}{edgelayer}
                		\draw (0) to (1);
                		\draw (1) to (2);
                		\draw (2) to (6.center);
                		\draw (5) to (12.center);
                		\draw (2) to (10.center);
                		\draw (1) to (9.center);
                		\draw (0) to (8.center);
                		\draw (5) to (7.center);
                	\end{pgfonlayer}
                \end{tikzpicture}
            }
			\caption{TT decomposition as a tensor diagram}
			\label{fig:tt_format}
		\end{figure}

		There exists a tuple of rank parameters $(r^{tt}_1, \dotsc, r^{tt}_{d - 1})$ with simultaneously attainable minimal possible $r^{tt}_i$. Such a
		tuple is called the TT-rank of a tensor $\mathcal{X}$. The minimal possible values $r^{tt}_i$ can be expressed as a 
		rank of corresponding unfolding $\mathcal{X}^{<i>}$ of tensor $\mathcal{X}$ defined in \eqref{eq:unfolding}:
		\begin{equation*}
			r^{tt}_i = \mathrm{rank}\,\left(\mathcal{X}^{<i>}\right), \quad i \in \overline{1, d - 1}.
		\end{equation*}
		The TT decomposition will be further denoted as
		\begin{equation}\label{eq:tt_dec_cool_notation}
			\mathcal{X} = \mathcal{T}\left(W^{(1)}, \dotsc, W^{(d)}\right).
		\end{equation}
		For the TT decomposition, one can define auxiliary useful tensors \citep{lubich2015time}: 
		\begin{align}
			\mathcal{X}_{\le \mu} &= \mathcal{T}\left(W^{(1)}, \dotsc, W^{(\mu)}\right)^{<\mu>},\quad  
			\mathcal{X}_{\le \mu}
			\in \mathbb{R}^{n_1 \dotsm n_\mu \times r_\mu}\label{eq:unf_less_mu}, \\
			\mathcal{X}_{\ge \mu} &= {\mathcal{T}\left(W^{(\mu)},\dotsc,W^{(d)}\right)^{<1>}}^{\top}, 
			\quad \mathcal{X}_{\ge \mu}
			\in \mathbb{R}^{n_\mu \dotsm n_d \times r_{\mu - 1}}\label{eq:unf_more_mu}, \\
			\mathcal{X}_{\neq \mu} &= \mathcal{X}_{\ge \mu + 1} \otimes_K I_{n_\mu}  
			\otimes_K\mathcal{X}_{\le \mu - 1}\nonumber.
		\end{align}
		In contrast to the Tucker decomposition, when the ranks are bounded, we observe that the TT format has polynomial growth of the total number of parameters, breaking
		 the curse of dimensionality.
		
	\subsubsection{Extended Tensor Train format}\label{sub:extended_tt_format}
		The ETT decomposition format of a tensor 
		$\mathcal{X} \in \mathbb{R}^{n_1\times \dotsc\times n_d}$ with the rank
		parameters $(r^{tt}_1, \dotsc, r^{tt}_{d-1}, r^{t}_1 \dotsc, r^{t}_d)$ can be viewed as a combination the Tucker and the ETT formats.
		In particular, it is a contraction of a Tucker core represented in  the TT format with $d$ Tucker factors
		$U^{(i)} \in \mathbb{R}^{n_i \times r_i^t}$:
		\begin{equation}\label{eq:notation_ett_format}
			\mathcal{X} = \left\llbracket \mathcal{T} \left(W^{(1)}, \dotsc, W^{(d)}\right); 
			U^{(1)}, \dotsc, U^{(d)}\right\rrbracket.
		\end{equation}
		One can also notice that The ETT format is a specific case of the HT 
		decomposition \citep{uschmajew2013geometry}. 
		The tensor diagram for \eqref{eq:notation_ett_format} is presented in  
		Figure~\ref{fig:ett_format}.
		\begin{figure}[h!tp]
			\centering
			\resizebox{0.7\textwidth}{!}{
                \begin{tikzpicture}
                	\begin{pgfonlayer}{nodelayer}
                		\node [style={medium_circ}, minimum size=15mm] (0) at (-6, -0.5) {$W^{(1)}$};
                		\node [style={medium_circ}, minimum size=15mm] (1) at (-3.5, -0.5) {$W^{(2)}$};
                		\node [style={medium_circ}, minimum size=15mm] (2) at (-1, -0.5) {$W^{(3)}$};
                		\node [style=none] (3) at (1.5, -0.5) {$\dotsc$};
                		\node [style={medium_circ}, minimum size=15mm] (5) at (4, -0.5) {$W^{(d)}$};
                		\node [style=none] (6) at (1, -0.5) {};
                		\node [style=none] (7) at (2, -0.5) {};
                		\node [style=none] (8) at (-6, 2) {};
                		\node [style=none] (9) at (-3.5, 2) {};
                		\node [style=none] (10) at (-1, 2) {};
                		\node [style=none] (12) at (4, 2) {};
                		\node [style=none] (13) at (-6.5, 0.75) {$r^{t}_1$};
                		\node [style=none] (14) at (-4, 0.75) {$r^{t}_2$};
                		\node [style=none] (15) at (-1.5, 0.75) {$r^{t}_3$};
                		\node [style=none] (18) at (-4.75, -1) {$r^{tt}_1$};
                		\node [style=none] (19) at (-2.25, -1) {$r^{tt}_2$};
                		\node [style=none] (20) at (0.25, -1) {$r^{tt}_3$};
                		\node [style=none] (21) at (2.75, -1) {$r^{tt}_{d-1}$};
                		\node [style={medium_circ}, minimum size=15mm] (22) at (-6, 2) {$U^{(1)}$};
                		\node [style=none] (23) at (3.5, 0.75) {$r^{t}_d$};
                		\node [style={medium_circ}, minimum size=15mm] (24) at (-3.5, 2) {$U^{(2)}$};
                		\node [style={medium_circ}, minimum size=15mm] (25) at (-1, 2) {$U^{(3)}$};
                		\node [style={medium_circ}, minimum size=15mm] (26) at (4, 2) {$U^{(d)}$};
                		\node [style=none] (27) at (-6, 3.75) {};
                		\node [style=none] (28) at (-3.5, 3.75) {};
                		\node [style=none] (30) at (-1, 3.75) {};
                		\node [style=none] (31) at (4, 3.75) {};
                		\node [style=none] (32) at (-6.5, 3.25) {$n_1$};
                		\node [style=none] (34) at (-4, 3.25) {$n_2$};
                		\node [style=none] (35) at (-1.5, 3.25) {$n_3$};
                		\node [style=none] (36) at (3.5, 3.25) {$n_d$};
                	\end{pgfonlayer}
                	\begin{pgfonlayer}{edgelayer}
                		\draw (0) to (1);
                		\draw (1) to (2);
                		\draw (2) to (6.center);
                		\draw (5) to (12.center);
                		\draw (2) to (10.center);
                		\draw (1) to (9.center);
                		\draw (0) to (8.center);
                		\draw (5) to (7.center);
                		\draw (22) to (27.center);
                		\draw (24) to (28.center);
                		\draw (25) to (30.center);
                		\draw (26) to (31.center);
                	\end{pgfonlayer}
                \end{tikzpicture}
            }
			\caption{ETT decomposition as a tensor diagram}
			\label{fig:ett_format}
		\end{figure}
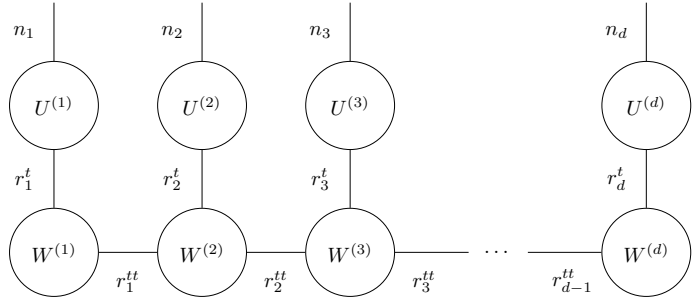

		Similarly, the rank values $(r^{tt}_1, \dotsc, r^{tt}_{d-1}, r^{t}_1 \dotsc, r^{t}_d)$ with the minimal possible values are:
		\begin{align*}
			& r^{tt}_i = \mathrm{rank}\,\left( A^{<i>} \right), \quad i \in \overline{1, d - 1}, \\
			& r^{t}_j = \mathrm{rank}\,\left( A_{(j)} \right), \quad j \in \overline{1, d}.
		\end{align*}

	\subsubsection{Shared Factor Tucker decomposition}\label{sub:sftucker_format}
		The SF-Tucker decomposition \citep{peshekhonov2024training} of a $d$-dimensional ($d = d_t + d_s$)
		tensor $ \mathcal{X} \in \mathbb{R}^{n_1\times \dotsc\times n_{d_t}\times n_{s}\times \dotsc\times n_s}$ 
		with the rank parameters 
		$(r^t_1, \dotsc, r^t_{d_t}, r^t_{s})$ is its representation as a contraction of $d_t$ Tucker factors 
		$U^{(i)} \in \mathbb{R}^{n_i \times r^t_i}, i \in \overline{1,d_t}$, one shared Tucker factor 
		$U\in \mathbb{R}^{n_s \times r^t_s}$ 
		(repeated $d_s$ times) and the Tucker core 
		$\mathcal{G}\in \mathbb{R}^{r^t_1\times \dotsc\times r^t_{d_t}\times r^t_{s}\times \dotsc\times r^t_s}$:
		\begin{equation}\label{eq:sftucker_format_cool_notation}
			\mathcal{X}	= \left\llbracket \mathcal{G}; U^{(1)}, \dotsc, U^{(d_t)}, U, \dotsc, U\right\rrbracket,
		\end{equation}
		the corresponding tensor diagram is in 
		Figure~\ref{fig:sftucker_format}.
		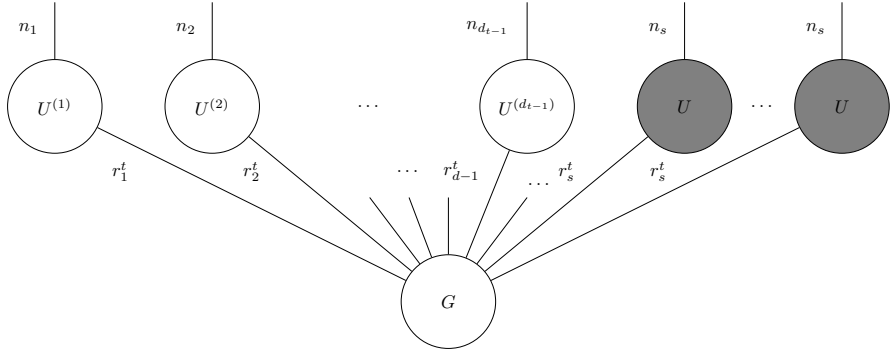
\begin{figure}[htpb]
			\centering
			\resizebox{0.9\textwidth}{!}{
                \begin{tikzpicture}
                	\begin{pgfonlayer}{nodelayer}
                		\node [style={medium_circ}, minimum size=18mm] (0) at (3.5, -2.75) {$G$};
                		\node [style={medium_circ}, minimum size=18mm] (1) at (-4, 1) {$U^{(1)}$};
                		\node [style={medium_circ}, minimum size=18mm] (2) at (-1, 1) {$U^{(2)}$};
                		\node [style={medium_circ}, minimum size=18mm] (4) at (5, 1) {$U^{(d_{t-1})}$};
                		\node [style=none] (8) at (-4, 3) {};
                		\node [style=none] (9) at (-1, 3) {};
                		\node [style=none] (13) at (5, 3) {};
                		\node [style=none] (14) at (2, 1) {$\dotsc$};
                		\node [style=none] (15) at (-0.25, -0.25) {$r^t_2$};
                		\node [style=none] (16) at (-2.75, -0.25) {$r^t_1$};
                		\node [style=none] (18) at (3.75, -0.25) {$r^t_{d-1}$};
                		\node [style=none] (19) at (-4.5, 2.5) {$n_1$};
                		\node [style=none] (20) at (-1.5, 2.5) {$n_2$};
                		\node [style=none] (22) at (4.25, 2.5) {$n_{d_{t-1}}$};
                		\node [style=none] (23) at (2, -0.75) {};
                		\node [style=none] (24) at (2.75, -0.25) {$\dotsc$};
                		\node [style=none] (25) at (2.75, -0.75) {};
                		\node [style=none] (27) at (3.5, -0.75) {};
                		\node [style={shared_medium_circ}, minimum size=18mm] (28) at (11, 1) {$U$};
                		\node [style=none] (29) at (11, 3) {};
                		\node [style={shared_medium_circ}, minimum size=18mm] (31) at (8, 1) {$U$};
                		\node [style=none] (32) at (8, 3) {};
                		\node [style=none] (33) at (7.5, 2.5) {$n_s$};
                		\node [style=none] (34) at (10.5, 2.5) {$n_s$};
                		\node [style=none] (35) at (7.5, -0.25) {$r^t_s$};
                		\node [style=none] (36) at (5.75, -0.25) {$r^t_s$};
                		\node [style=none] (37) at (9.5, 1) {$\dotsc$};
                		\node [style=none] (38) at (5, -0.75) {};
                		\node [style=none] (39) at (5.25, -0.5) {$\dotsc$};
                	\end{pgfonlayer}
                	\begin{pgfonlayer}{edgelayer}
                		\draw (0) to (1);
                		\draw (0) to (2);
                		\draw (0) to (4);
                		\draw (1) to (8.center);
                		\draw (2) to (9.center);
                		\draw (13.center) to (4);
                		\draw (0) to (23.center);
                		\draw (0) to (25.center);
                		\draw (0) to (27.center);
                		\draw (29.center) to (28);
                		\draw (32.center) to (31);
                		\draw (0) to (31);
                		\draw (0) to (28);
                		\draw (0) to (38.center);
                	\end{pgfonlayer}
                \end{tikzpicture}
            }
			\caption{SF-Tucker decomposition as a tensor diagram}
			\label{fig:sftucker_format}
		\end{figure}

		The rank is defined similarly to the Tucker case, but with the additional constraint of equality for some of the modes. As a result, we have a tuple of $d_t$ rank parameters and the additional shared one: $(r^{t}_1, \dotsc, r^{t}_{d_t}, r^{t}_s)$ with minimal $r^{t}_i$. The ranks
		$r^{t}_i, i \in \overline{1, d_t}$ coincide with the respective Tucker ranks, while the shared rank $r^{t}_s$ can be found as follows:
		\begin{equation}\label{eq:shared_sftucker_rank}
			r^{t}_s = \mathrm{rank}\,
			\left( \mathcal{X}_{(d_{t} + 1)} | \mathcal{X}_{(d_{t} + 2)} | \dots | \mathcal{X}_{(d)} \right).
		\end{equation}

\section{Shared-Factor Extended Tensor Train decomposition}\label{sub:sfett_format}
In this section, we present the new SF-ETT decomposition.
		For a $d$-dimensional tensor 
		$\mathcal{X} \in \mathbb{R}^{n_1\times \dotsc\times n_{d_t}\times n_{s}\times \dotsc\times n_s}$, its SF-ETT decomposition reads as:
		\begin{equation}\label{eq:sfett_format}
			\mathcal{X}	= \left\llbracket 
			\mathcal{T}\left(W^{(1)}, \dotsc, W^{(d)}\right); U^{(1)}, \dotsc, U^{(d_t)}, U, \dotsc, U
			\right\rrbracket,
		\end{equation}
		where $W^{(i)} \in \mathbb{R}^{r^{\mathrm{tt}}_{i - 1}\times r^{\mathrm{t}}_{i}\times r^{\mathrm{tt}}_{i}}$, 
		$i \in \overline{1, d_t}$, $W^{(i)} \in 
		\mathbb{R}^{r^{\mathrm{tt}}_{i - 1}\times r^{\mathrm{t}}_{s}\times r^{\mathrm{tt}}_{i}}$,
		$i \in \overline{d_t + 1, d}$
		($r^{\mathrm{tt}}_{0} = r^{\mathrm{tt}}_{d} = 1$), 
		$U^{(j)} \in \mathbb{R}^{n_j \times r^{\mathrm{t}}_{j}}, j \in \overline{1,d_t}$, 
		and $U\in \mathbb{R}^{n_s \times  r^{\mathrm{t}}_{s}}$ -- the shared factor.
		The tensor diagram for \eqref{eq:sfett_format} is presented in 
		Figure~\ref{fig:sfett_format}.
		\begin{figure}[h!]
			\centering
			\resizebox{0.9\textwidth}{!}{
                \begin{tikzpicture}
                	\begin{pgfonlayer}{nodelayer}
                		\node [style={medium_circ}, minimum size=20mm] (0) at (-7, -0.5) {$W^{(1)}$};
                		\node [style={medium_circ}, minimum size=20mm] (2) at (0, -0.5) {$W^{(d_t)}$};
                		\node [style=none] (3) at (7, -0.5) {$\dotsc$};
                		\node [style={medium_circ}, minimum size=20mm] (5) at (10.5, -0.5) {$W^{(d)}$};
                		\node [style=none] (7) at (7.5, -0.5) {};
                		\node [style=none] (13) at (-7.5, 1.5) {$r^{t}_1$};
                		\node [style=none] (15) at (-0.5, 1.5) {$r^{t}_{d_t}$};
                		\node [style=none] (18) at (-5.25, -1) {$r^{tt}_1$};
                		\node [style=none] (19) at (-1.75, -1) {$r^{tt}_{d_t -1}$};
                		\node [style=none] (20) at (1.75, -1) {$r^{tt}_{d_t}$};
                		\node [style=none] (21) at (9, -1) {$r^{tt}_{d-1}$};
                		\node [style={medium_circ}, minimum size=20mm] (22) at (-7, 3) {$U^{(1)}$};
                		\node [style=none] (23) at (10, 1.5) {$r^{t}_s$};
                		\node [style={medium_circ}, minimum size=20mm] (25) at (0, 3) {$U^{(d_t)}$};
                		\node [style={shared_medium_circ}, minimum size=20mm] (26) at (10.5, 3) {$U$};
                		\node [style=none] (27) at (-7, 5.25) {};
                		\node [style=none] (30) at (0, 5.25) {};
                		\node [style=none] (31) at (10.5, 5.25) {};
                		\node [style=none] (32) at (-7.5, 4.5) {$n_1$};
                		\node [style=none] (35) at (-0.5, 4.5) {$n_{d_t}$};
                		\node [style=none] (36) at (10, 4.5) {$n_s$};
                		\node [style=none] (37) at (-3.5, -0.5) {$\dotsc$};
                		\node [style=none] (38) at (-4, -0.5) {};
                		\node [style=none] (39) at (-3, -0.5) {};
                		\node [style={medium_circ}, minimum size=20mm] (40) at (3.5, -0.5) {$W^{(d_{t}+1)}$};
                		\node [style=none] (41) at (6.5, -0.5) {};
                		\node [style=none] (42) at (3, 1.5) {$r^{t}_s$};
                		\node [style=none] (43) at (5.25, -1) {$r^{tt}_{d_{t+1}}$};
                		\node [style={shared_medium_circ}, minimum size=20mm] (44) at (3.5, 3) {$U$};
                		\node [style=none] (45) at (3.5, 5.25) {};
                		\node [style=none] (46) at (3, 4.5) {$n_{s}$};
                	\end{pgfonlayer}
                	\begin{pgfonlayer}{edgelayer}
                		\draw (5) to (7.center);
                		\draw (22) to (27.center);
                		\draw (25) to (30.center);
                		\draw (26) to (31.center);
                		\draw (0) to (38.center);
                		\draw (39.center) to (2);
                		\draw (40) to (41.center);
                		\draw (44) to (45.center);
                		\draw (2) to (40);
                		\draw (40) to (44);
                		\draw (5) to (26);
                		\draw (0) to (22);
                		\draw (2) to (25);
                	\end{pgfonlayer}
                \end{tikzpicture}
            }
			\caption{SF-ETT decomposition as a tensor diagram}
			\label{fig:sfett_format}
		\end{figure}
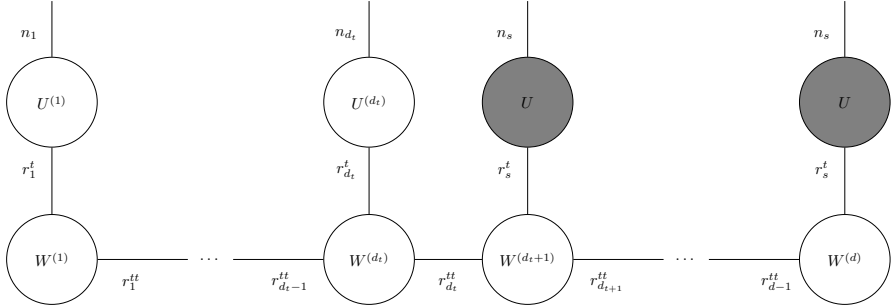

\subsection{SF-ETT ranks}
        Now we need to define the rank for this format.
		By analogy to the other tensor formats, we have that all rank parameters can be minimized independently. Formal verification of this fact is given in the proof the Theorem~\ref{theorem:sfett_ranks}. 
        Thus, the tuple $(r^{\mathrm{tt}}_{1}, \dotsc, r^{\mathrm{tt}}_{d - 1}, r^{\mathrm{t}}_{1}, 
		\dots r^{\mathrm{t}}_{d_t}, r^{\mathrm{t}}_{s})$
		is called the SF-ETT rank if it consists of minimal possible values.
        The exact values in terms of unfolding matrices is given in the theorem below.

		\begin{theorem}\label{theorem:sfett_ranks}
			The SF-ETT rank 
			$(r^{\mathrm{tt}}_{1}, \dotsc, r^{\mathrm{tt}}_{d - 1}, r^{\mathrm{t}}_{1}, 
			\dots r^{\mathrm{t}}_{d_t}, r^{\mathrm{t}}_{s})$ 
			of a tensor $\mathcal{X}$ is given by
			\begin{align}\label{eq:sfett_ranks}
				& r^{\mathrm{tt}}_{i} = \mathrm{rank}\,\left( \mathcal{X}^{<i>} \right), 
				\quad \forall i \in \overline{1,d - 1},\nonumber\\
				& r^{\mathrm{t}}_{i} = \mathrm{rank}\,\left( \mathcal{X}_{(i)} \right),  
				\quad \forall i \in \overline{1,d_t},\nonumber\\
				& r^{\mathrm{t}}_{s} = \mathrm{rank}\,\left(
				\begin{bmatrix}
					\mathcal{X}_{(d_t + 1)} &
					 \mathcal{X}_{(d_t + 2)} & \dots & \mathcal{X}_{(d)}\nonumber
				\end{bmatrix}\right).
			\end{align}
		\end{theorem}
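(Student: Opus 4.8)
The plan is to prove \eqref{eq:sfett_ranks} in two stages. First I would show that for \emph{every} admissible SF-ETT representation \eqref{eq:sfett_format} each rank parameter is bounded below by the corresponding matrix rank on the right-hand side of \eqref{eq:sfett_ranks}. Then I would exhibit a \emph{single} decomposition attaining all three families of bounds at once; this simultaneously proves that the stated values are the minima and that the parameters can be minimized independently.

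\emph{Lower bounds.} Fix an arbitrary SF-ETT decomposition of $\mathcal{X}$ with core $\mathcal{C} = \mathcal{T}(W^{(1)}, \dots, W^{(d)})$. Every unfolding and matricization of $\mathcal{X}$ factors through this representation. Writing $P_{\le i}$ for the Kronecker product of the Tucker factors over modes $1, \dots, i$ and $P_{>i}$ for the Kronecker product over modes $i+1, \dots, d$ (the shared factor $U$ appearing once per shared mode), one has $\mathcal{X}^{<i>} = P_{\le i}\, \mathcal{C}^{<i>}\, P_{>i}^\top$. Since $\mathrm{rank}\,(\mathcal{C}^{<i>}) \le r^{\mathrm{tt}}_i$ by the TT structure of the core and matrix multiplication cannot increase rank, we obtain $\mathrm{rank}\,(\mathcal{X}^{<i>}) \le r^{\mathrm{tt}}_i$. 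For a non-shared mode $i \le d_t$ the matricization factors as $\mathcal{X}_{(i)} = U^{(i)} M_i$ for some $M_i$, so $\mathrm{rank}\,(\mathcal{X}_{(i)}) \le r^{\mathrm{t}}_i$. For every shared mode $k > d_t$ we have $\mathcal{X}_{(k)} = U M_k$, hence all these matricizations share the column space $\mathrm{col}\,(U)$; the stacked matrix therefore satisfies $\mathrm{rank}\,\left[ \mathcal{X}_{(d_t+1)} \mid \dots \mid \mathcal{X}_{(d)} \right] \le \mathrm{rank}\,(U) \le r^{\mathrm{t}}_s$.

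\emph{Attaining construction.} Choose $U^{(i)}$, $i \in \overline{1,d_t}$, with orthonormal columns spanning $\mathrm{col}\,(\mathcal{X}_{(i)})$, and choose $U$ with orthonormal columns spanning the column space of $\left[ \mathcal{X}_{(d_t+1)} \mid \dots \mid \mathcal{X}_{(d)} \right]$; the numbers of columns are then exactly the right-hand sides of \eqref{eq:sfett_ranks}. Define the core by projection, $\mathcal{C} = \left\llbracket \mathcal{X};\, (U^{(1)})^\top, \dots, (U^{(d_t)})^\top, U^\top, \dots, U^\top \right\rrbracket$, and let $\mathcal{T}(W^{(1)}, \dots, W^{(d)})$ be a minimal-TT-rank decomposition of $\mathcal{C}$. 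Because each factor has orthonormal columns spanning the relevant space, the projectors $U^{(i)}(U^{(i)})^\top$ and $UU^\top$ act as the identity there, so reconstructing via \eqref{eq:sfett_format} returns $\mathcal{X}$ exactly. The non-shared and shared Tucker ranks are attained by the choice of factors. For the TT ranks I would reuse $\mathcal{X}^{<i>} = P_{\le i}\, \mathcal{C}^{<i>}\, P_{>i}^\top$: now $P_{\le i}$ and $P_{>i}$ are Kronecker products of matrices with orthonormal columns, hence have orthonormal columns themselves and, in particular, full column rank. Multiplying $\mathcal{C}^{<i>}$ by a full-column-rank matrix on the left and a full-row-rank matrix on the right preserves rank, so $\mathrm{rank}\,(\mathcal{C}^{<i>}) = \mathrm{rank}\,(\mathcal{X}^{<i>})$ and the minimal TT rank of the core equals $\mathrm{rank}\,(\mathcal{X}^{<i>})$. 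As all three families of ranks are attained by this one decomposition, they are minimized simultaneously.

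I expect the shared factor to be the main obstacle. The crux is that a single $U$ must simultaneously span the column spaces of \emph{all} shared-mode matricizations, which is exactly what forces the stacked-matrix rank in \eqref{eq:sfett_ranks}; moreover one must verify that reusing the same $U$ across modes does not inflate the TT ranks. This last point works precisely because the Kronecker product of the repeated orthonormal $U$ still has orthonormal columns and therefore preserves the unfolding ranks.
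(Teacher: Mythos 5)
Your proposal is correct and follows essentially the same route as the paper: lower bounds via skeleton-type factorizations of the matricizations and unfoldings, followed by an attaining construction with orthonormal factors spanning $\mathrm{col}\,(\mathcal{X}_{(i)})$ and the stacked shared matricizations, a projection-defined core, and a minimal-rank TT decomposition of that core whose unfolding ranks coincide with those of $\mathcal{X}$. The only difference is cosmetic: you prove the shared-rank lower bound directly (all shared matricizations share $\mathrm{col}\,(U)$), whereas the paper cites the analogous SF-Tucker result.
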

		\begin{proof}
			Indeed, let $\mathcal{X}$ tensor be represented in the SF-ETT format
			\begin{equation*}
				\mathcal{X} = \left\llbracket \mathcal{G}; 
				U^{(1)}, \dotsc, U^{(d_t)}, U, \dotsc, U\right\rrbracket, \quad 
				\mathcal{G} = \mathcal{T}\left( W^{(1)}, \dotsc, W^{(d)} \right).
			\end{equation*}
			Due to the fact that SF-ETT format is a specific case of the Tucker decomposition, the 
			following equation holds
			\begin{equation*}
				\mathcal{X}_{(k)} = U^{(k)} \mathcal{G}_{(k)} \left( U   
					\otimes_K \dotsm \otimes_K U^{(k + 1)} \otimes_K U^{(k - 1)}\otimes_K\dotsm \otimes_K U^{(1)}
				 \right)^\top,
			\end{equation*}
			which is a skeleton decomposition. This means that 
			\begin{equation*}
				\mathrm{rank}\, \left( \mathcal{X}_{(k)} \right) \le r_{k}^t.
			\end{equation*}
			The shared Tucker rank formula arises from the same proposition in \citep{peshekhonov2024training}. 
			As a result, one may take $U^{(i)} \in \mathbb{R}^{n_i \times r^t_i}$ 
			as basis columns in $\mathrm{Im}\,\left( \mathcal{X}_{(i)}\right)$ for 
			every $i \in \overline{1, d_t}$	and $U \in \mathbb{R}^{n \times r^t_s}$ as basis columns
			in 	$\mathrm{Im}\left(
				\begin{bmatrix} \mathcal{X}_{(d_t + 1)} & \mathcal{X}_{(d_t + 2)} & \dots & \mathcal{X}_{(d)}
			\end{bmatrix}\right)$ then the Tucker core can be calculated as follows:
			\begin{equation*}
				\mathcal{G} = \left\llbracket \mathcal{X}; {U^{(1)}}^\top, \dotsc,{U^{(d_t)}}^\top, 
				U^\top, \dotsc, U^\top 
				\right\rrbracket \in \mathbb{R}^{r^t_1 \times 
				\dotsm \times r^t_{d_t} \times r^t_s \times \dotsm \times r^t_s}.
			\end{equation*}
			On the other hand, the formula for the $k$-th unfolding in the ETT format is
			\begin{equation}\label{eq:proof_sfett_ranks_1}
				 \mathcal{X}^{<k>} = 
				\left[\left(U^{(k)} \otimes_K \dotsm \otimes_K U^{(1)}\right)
				\mathcal{G}_{ \le k} \right] \\
				 \left[\mathcal{G}_{ \ge k + 1}^\top
				\left(\left(U^{(d)}\right)^\top \otimes_K  \dotsm \otimes_K \left(U^{(k + 1)}\right)^\top\right)\right],
			\end{equation}
			where $U^{(k)} = U, k > d_t$ and explicit formulas 
			for $\mathcal{G}_{ \le \mu}, \mathcal{G}_{ \ge \mu }$ written 
			in \eqref{eq:unf_less_mu}, \eqref{eq:unf_more_mu}.
			It is a skeleton decomposition for the unfolding. 
            Moreover from \eqref{eq:proof_sfett_ranks_1},
			due to the full column rank of $U^{(i)}$ one can notice that  
			\begin{equation*}
				\mathrm{rank}\, \left( \mathcal{X}^{<k>} \right) = \mathrm{rank}\, \left( \mathcal{G}^{<k>} \right)
				 \le r_{k}^{tt}.
			\end{equation*}
			Decomposing the core $\mathcal{G}$ into the TT decomposition with the TT rank equal to 
			$(\mathcal{X}^{<1>}, \dotsc, \mathcal{X}^{<d-1>})$~\citep{oseledets2010tt}, one may obtain  the
			SF-ETT decomposition with the parameters that achieve the proposed lower bounds.
		\end{proof}

		The SF-ETT decomposition is not unique and can be stored in orthogonalized form.  
		A decomposition of $\mathcal{X}$ that consists 
		of orthogonal Tucker factors $ U, U^{(i)}, i \in \overline{1, d_t}$
		and $\mu$-orthogonal TT decomposition of Tucker core is denoted as  a $\mu$-orthogonal SF-ETT form
		\begin{equation}\label{eq:sfett_format_ortho}
			\begin{aligned}
				&\mathcal{X} = \left\llbracket \mathcal{G}; U^{(1)}, \dotsc U^{(d_t)}, U, \dotsc, U \right\rrbracket,\\
				&\mathcal{G} = \mathcal{T}
				\left( W^{(1)}_L, \dotsc, W^{(\mu - 1)}_L, \overline{W}^{(\mu)}, W^{(\mu + 1)}_R, \dotsc, W^{(d)}_R \right),
			\end{aligned}
		\end{equation}
		where $W_L^{(i)}, i \in \overline{1, \mu - 1}$ are left orthogonal, 
		$W_R^{(i)}, i \in \overline{\mu + 1, d}$ are right orthogonal
		and $\overline{W}^{(\mu)}$ is unconstrained. This can be achieved via applying successive 
		QR decomposition operations to Tucker factors and TT cores.
		The tensor diagram for the $\mu$-orthogonal SF-ETT tensor form is represented in \eqref{fig:sfett_mu_orth_format}.

		\begin{figure}[h!tp]
			\centering
			\resizebox{0.9\textwidth}{!}{
                \begin{tikzpicture}
                	\begin{pgfonlayer}{nodelayer}
                		\node [style={medium_circ}, minimum size=20mm] (0) at (-7, -0.5) {};
                		\fill [black] (-7, -0.5) -- ++(45:10mm) arc (45:-135:10mm) -- cycle;
                		\node [style={medium_circ}, minimum size=20mm] (2) at (0, -0.5) {};
                		\node [style=none] (3) at (10.5, -0.5) {$\dotsc$};
                		\node [style={medium_circ}, minimum size=20mm] (5) at (14, -0.5) {};
                		\fill [black] (14, -0.5) -- ++(135:10mm) arc (135:315:10mm) -- cycle;
                		\node [style={medium_circ}, minimum size=20mm] (2) at (0, -0.5) {};
                		\fill [black] (0, -0.5) -- ++(45:10mm) arc (45:-135:10mm) -- cycle;
                		\node [style=none] (7) at (11, -0.5) {};
                		\node [style=none] (13) at (-7.5, 1.5) {$r^{t}_1$};
                		\node [style=none] (15) at (-0.5, 1.5) {$r^{t}_{\mu - 1}$};
                		\node [style=none] (18) at (-5.5, -1) {$r^{tt}_1$};
                		\node [style=none] (19) at (-1.5, -1) {$r^{tt}_{\mu -2}$};
                		\node [style=none] (20) at (1.5, -1) {$r^{tt}_{\mu - 1}$};
                		\node [style=none] (21) at (12.5, -1) {$r^{tt}_{d-1}$};
                		\node [style={medium_circ}, minimum size=20mm] (22) at (-7, 3) {};
                		\fill [black] (-7, 3) -- ++(180:10mm) arc (180:360:10mm) -- cycle;
                		\node [style=none] (23) at (13.5, 1.5) {$r^{t}_s$};
                		\node [style={medium_circ}, minimum size=20mm] (25) at (0, 3) {};
                		\fill [black] (0, 3) -- ++(180:10mm) arc (180:360:10mm) -- cycle;
                		\node [style={shared_medium_circ}, minimum size=20mm] (26) at (14, 3) {};
                		\fill [black] (14, 3) -- ++(180:10mm) arc (180:360:10mm) -- cycle;
                		\node [style=none] (27) at (-7, 5.25) {};
                		\node [style=none] (30) at (0, 5.25) {};
                		\node [style=none] (31) at (14, 5.25) {};
                		\node [style=none] (32) at (-7.5, 4.5) {$n_1$};
                		\node [style=none] (35) at (-0.5, 4.5) {$n_{\mu -1}$};
                		\node [style=none] (36) at (13.5, 4.5) {$n_s$};
                		\node [style=none] (37) at (-3.5, -0.5) {$\dotsc$};
                		\node [style=none] (38) at (-4, -0.5) {};
                		\node [style=none] (39) at (-3, -0.5) {};
                		\node [style={medium_circ}, minimum size=20mm] (40) at (7, -0.5) {};
                		\fill [black] (7, -0.5) -- ++(135:10mm) arc (135:315:10mm) -- cycle;
                		\node [style=none] (41) at (10, -0.5) {};
                		\node [style=none] (42) at (6.5, 1.5) {$r^{t}_{\mu + 1}$};
                		\node [style=none] (43) at (8.75, -1) {$r^{tt}_{\mu + 1}$};
                		\node [style={medium_circ}, minimum size=20mm] (44) at (7, 3) {};
                		\fill [black] (7, 3) -- ++(180:10mm) arc (180:360:10mm) -- cycle;
                		\node [style=none] (45) at (7, 5.25) {};
                		\node [style=none] (46) at (6.5, 4.5) {$n_{\mu + 1}$};
                		\node [style={medium_circ}, minimum size=20mm] (47) at (3.5, -0.5) {};
                		\node [style={medium_circ}, minimum size=20mm] (49) at (3.5, 3) {};
                		\fill [black] (3.5, 3) -- ++(180:10mm) arc (180:360:10mm) -- cycle;
                		\node [style=none] (50) at (3.5, 5.25) {};
                		\node [style=none] (51) at (5, -1) {$r^{tt}_{\mu}$};
                		\node [style=none] (52) at (2.75, 4.5) {$n_{\mu}$};
                		\node [style=none] (53) at (2.75, 1.5) {$r^{t}_{\mu}$};
                		\node [style=none] (54) at (-7, -2) {$1$};
                		\node [style=none] (55) at (0, -2) {$\mu - 1$};
                		\node [style=none] (56) at (3.5, -2) {$\mu$};
                		\node [style=none] (57) at (7, -2) {$\mu + 1$};
                		\node [style=none] (58) at (14, -2) {$d$};
                	\end{pgfonlayer}
                	\begin{pgfonlayer}{edgelayer}
                		\draw (5) to (7.center);
                		\draw (22) to (27.center);
                		\draw (25) to (30.center);
                		\draw (26) to (31.center);
                		\draw (0) to (38.center);
                		\draw (39.center) to (2);
                		\draw (40) to (41.center);
                		\draw (44) to (45.center);
                		\draw (40) to (44);
                		\draw (5) to (26);
                		\draw (0) to (22);
                		\draw (2) to (25);
                		\draw (49) to (50.center);
                		\draw (47) to (49);
                		\draw (2) to (47);
                		\draw (47) to (40);
                	\end{pgfonlayer}
                \end{tikzpicture}
            }		
			\caption{The visualization of a SF-ETT tensor in $\mu$-orthogonal form}
			\label{fig:sfett_mu_orth_format}
		\end{figure}
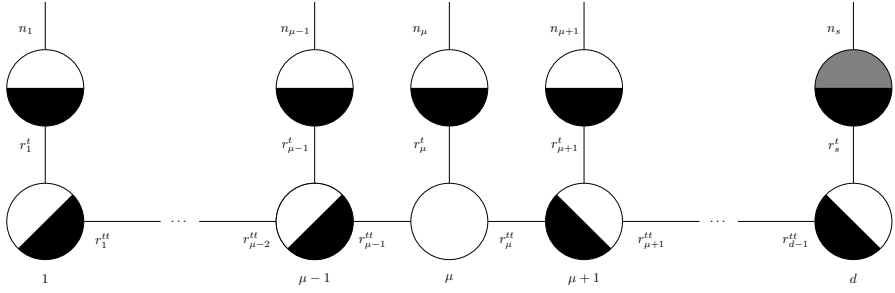

		Here and further all SF-ETT decomposed tensors should be 
		considered as $(d - 1)$-orthogonal unless otherwise stated.
		The amount of parameters to store SF-ETT decomposed tensor $\mathcal{X}$ is 
		\begin{equation}\label{eq:sfett_param_amount}
			\mathcal{O}\left(d{(r^{\mathrm{tt}})}^2 r^{\mathrm{t}}
			+ d_t r^{\mathrm{t}}n + r^{\mathrm{t}}_{s}n_{s}\right), \quad r^{\mathrm{t}} = \max_i r^{\mathrm{t}}_{i},\quad r^{\mathrm{tt}} = \max_i r^{\mathrm{tt}}_{i},\quad n = \max_{i \le d_t} n_i.
		\end{equation}
        Suppose that $d_s=d$, i.e., all the modes are shared.
        For simplicity let us also assume that all the ranks are equal to~$r$. 
        We are interested in the regime when the mode size $n_s$ is large, meaning that $n_s \gg r^2$.
        From~\eqref{eq:sfett_param_amount}, we observe that the benefit in storage for the proposed decomposition is proportional to the dimensionality $d$.
		
	\subsection{SF-ETT-SVD and rounding}\label{sub:sfett_svd_rounding}
    Next, we need a reliable algorithm to approximate a tensor in the new format.
    Fortunately, such an algorithm exists and boils down to a number of SVD decompositions. 
    We refer to this algorithm as the SF-ETT-SVD. 
    This algorithm is also quasi-optimal as follows from Theorem~\ref{theorem:sfett_hosvd}, meaning that it can be further used as a retraction in the Riemannian optimization framework.

		\begin{theorem}\label{theorem:sfett_hosvd}
			Let $\mathcal{X} \in \mathbb{R}^{n_1\times \dotsc\times n_{d_t}\times n_{s}\times \dotsc\times n_{s}}$, 
			let also $U^{(i)} \in \mathbb{R}^{n_i \times r^t_i}$ be a matrix of the first $r^t_i$ left singular vectors of 
			$\mathcal{X}_{(i)}$ and $U \in \mathbb{R}^{n_s \times r^t_s}$ of the first $r^t_s$ left singular vectors of 
			$\begin{bmatrix} \mathcal{X}_{(d_t + 1)} &\mathcal{X}_{(d_t + 2)} & \dots & \mathcal{X}_{(d)}\nonumber
			\end{bmatrix}$. Let $\mathcal{G} \in \mathbb{R}^{r^t_1 \times \dotsm r^t_s}$ such that
			\begin{equation*}
				\mathcal{G} = \left\llbracket \mathcal{X}; {U^{(1)}}^{\top}, \dotsc, {U^{(d_t)}}^{\top}, 
				U^\top, \dotsc, U^\top\right\rrbracket.
			\end{equation*}
            Let also $\mathcal{T}\left( W^{(1)}, \dotsc, W^{(d)} \right)$ be the TT-SVD~\citep{oseledets2011tensor} approximation of $\mathcal{G}$ with the TT-rank $( r^{tt}_1, \dots,  r^{tt}_{d-1})$.
			Then for $\mathbf{r} = (r^{\mathrm{tt}}_{1}, \dotsc, r^{\mathrm{tt}}_{d - 1}, r^{\mathrm{t}}_{1}, 
			\dots r^{\mathrm{t}}_{d_t}, r^{\mathrm{t}}_{s})$ the tensor
			\begin{equation*}
				P_{\mathbf{r}}^{\mathtt{sf-ett-hosvd}} (\mathcal{X}) 
				= \left\llbracket 
				\mathcal{T}\left( W^{(1)}, \dotsc, {W}^{(d)}\right); 
				U^{(1)}, \dotsc, U^{(d_t)}, U, \dotsc, U
				\right\rrbracket
			\end{equation*}
			is a quasi-optimal approximation of $\mathcal{X}$:
			\begin{equation*}
				\left\|\mathcal{X} - P_{\mathbf{r}}^{\mathtt{sf-ett-hosvd}} (\mathcal{X})\right\| \le 
				C(d) 
				\inf\limits_{\mathrm{rank}_{\mathrm{sfett}}^{d_s}(\mathcal{Y}) 
				\preceq \mathbf{r}} \| \mathcal{X} - \mathcal{Y}\|,
			\end{equation*}
			where $\preceq$ is the element-wise comparison and 
			$
				C(d) = \left( \sqrt{d} +  \sqrt{d}\sqrt{d - 1} + \sqrt{d - 1} \right).
			$
		\end{theorem}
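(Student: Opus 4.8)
The plan is to read the construction as two nested truncations and to control each one separately against an arbitrary competitor $\mathcal{Y}$ of SF-ETT rank $\preceq \mathbf{r}$, taking the infimum only at the very end. Introduce the shared-factor Tucker projector $P^U$ defined as the composition of the orthogonal mode projectors $U^{(i)}U^{(i)\top}$ on modes $i \in \overline{1,d_t}$ and $UU^\top$ on each shared mode $k \in \overline{d_t+1,d}$; these act on distinct modes, so they commute, and by construction $P^U\mathcal{X} = \llbracket \mathcal{G}; U^{(1)},\dotsc,U^{(d_t)},U,\dotsc,U\rrbracket$ with $\mathcal{G}$ the core from the statement. Since the Tucker factors have orthonormal columns, TT-SVD truncation commutes with the factor embedding, so the claimed approximant $P_{\mathbf{r}}^{\mathtt{sf-ett-hosvd}}(\mathcal{X})$ is exactly the TT-rank-$\mathbf{r}^{\mathrm{tt}}$ TT-SVD truncation of $P^U\mathcal{X}$, where $\mathbf{r}^{\mathrm{tt}}=(r^{\mathrm{tt}}_1,\dotsc,r^{\mathrm{tt}}_{d-1})$. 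Then I would split by the triangle inequality into the \emph{factor} error $\|\mathcal{X}-P^U\mathcal{X}\|$ and the \emph{core} error $\|P^U\mathcal{X}-P_{\mathbf{r}}^{\mathtt{sf-ett-hosvd}}(\mathcal{X})\|$.

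For the factor error I would run the standard commuting-projection telescoping used in the HOSVD error analysis, which yields the Pythagorean bound $\|\mathcal{X}-P^U\mathcal{X}\|^2 \le \sum_{i=1}^{d_t}\|(I-U^{(i)}U^{(i)\top})\mathcal{X}_{(i)}\|_F^2 + \sum_{k=d_t+1}^{d}\|(I-UU^\top)\mathcal{X}_{(k)}\|_F^2$. The shared summands assemble into a single block term $\|(I-UU^\top)M\|_F^2$ with $M=\begin{bmatrix}\mathcal{X}_{(d_t+1)}&\dotsc&\mathcal{X}_{(d)}\end{bmatrix}$, because horizontal concatenation commutes with the left projection and the squared Frobenius norm is additive over blocks. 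Each summand is a best low-rank truncation error (Eckart–Young–Mirsky) of the corresponding matrix, since $U^{(i)}$ and $U$ collect the leading singular vectors. Invoking Theorem~\ref{theorem:sfett_ranks}, the competitor satisfies $\mathrm{rank}\,(\mathcal{Y}_{(i)})\le r^{\mathrm{t}}_i$ and $\mathrm{rank}\,(M^{\mathcal{Y}})\le r^{\mathrm{t}}_s$, so each non-shared term is $\le\|\mathcal{X}-\mathcal{Y}\|^2$ and the shared term is $\le \|M-M^{\mathcal{Y}}\|_F^2 = d_s\|\mathcal{X}-\mathcal{Y}\|^2$. Summing gives $\|\mathcal{X}-P^U\mathcal{X}\|\le \sqrt{d}\,\|\mathcal{X}-\mathcal{Y}\|$.

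For the core error I would apply TT-SVD quasi-optimality~\citep{oseledets2011tensor} to $P^U\mathcal{X}$, obtaining $\|P^U\mathcal{X}-P_{\mathbf{r}}^{\mathtt{sf-ett-hosvd}}(\mathcal{X})\| \le \sqrt{d-1}\,\inf_{\mathrm{rank}(\mathcal{H}^{<i>})\le r^{\mathrm{tt}}_i}\|P^U\mathcal{X}-\mathcal{H}\|$. By Theorem~\ref{theorem:sfett_ranks} the competitor $\mathcal{Y}$ already obeys $\mathrm{rank}\,(\mathcal{Y}^{<i>})\le r^{\mathrm{tt}}_i$, so it is feasible for this infimum, whence $\inf\|P^U\mathcal{X}-\mathcal{H}\|\le\|P^U\mathcal{X}-\mathcal{Y}\|\le\|P^U\mathcal{X}-\mathcal{X}\|+\|\mathcal{X}-\mathcal{Y}\|\le(\sqrt{d}+1)\|\mathcal{X}-\mathcal{Y}\|$, reusing the factor bound from the previous step. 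Adding the two contributions gives $\|\mathcal{X}-P_{\mathbf{r}}^{\mathtt{sf-ett-hosvd}}(\mathcal{X})\|\le\big(\sqrt{d}+\sqrt{d}\sqrt{d-1}+\sqrt{d-1}\big)\|\mathcal{X}-\mathcal{Y}\|$, and taking the infimum over feasible $\mathcal{Y}$ yields the stated constant $C(d)$.

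The main obstacle is the shared-factor HOSVD bound: one must verify that the commuting-projection telescoping survives the repetition of the single factor $U$ across all $d_s$ shared modes, and that the per-mode shared errors reassemble \emph{exactly} into the truncation error of the concatenated matrix $M$ whose rank defines $r^{\mathrm{t}}_s$. A second delicate point is justifying that TT-SVD truncation commutes with the orthonormal factor embedding, so that the TT-SVD of $P^U\mathcal{X}$ is genuinely the stated approximant; here comparing against $\mathcal{Y}$ itself, rather than against its projection onto the chosen factor subspaces, is precisely what forces the extra $(\sqrt{d}+1)$ factor (a sharper comparison against $P^U\mathcal{Y}$ would remove the cross term, but the looser route gives exactly the advertised $C(d)$).
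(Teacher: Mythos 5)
Your proof is correct and follows essentially the same route as the paper's own argument (its second chain of inequalities, which matches the theorem's SF-HOSVD-then-TT-SVD construction): a triangle-inequality split into the shared-factor Tucker error and the TT core error, the identification of the approximant with the TT-SVD truncation of the factor-projected tensor via orthonormality, and the same reuse of the $\sqrt{d}$ bound inside the $\sqrt{d-1}$ term, giving $C(d)=\sqrt{d}+\sqrt{d}\sqrt{d-1}+\sqrt{d-1}$. The only real difference is that you derive the $\sqrt{d}$ shared-factor HOSVD quasi-optimality from scratch (commuting-projection telescoping plus Eckart--Young applied to the concatenated matricization), whereas the paper invokes this bound from the SF-Tucker literature \citep{peshekhonov2024training}; this makes your argument slightly more self-contained but does not change the structure of the proof.
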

        
		\begin{proof}
			Let $\mathcal{X} \in \mathbb{R}^{n_1\times \dotsc\times n_{d_t}, n_{s}, \dotsc, n_{s}}$ be an arbitrary tensor 
			and $P_{\mathbf{r}}^{\mathtt{sf-ett-hosvd}} (\mathcal{X})$  denoted as $P(\mathcal{X})$
			its SF-ETT approximation of rank $\mathbf{r} = (\mathbf{r}^{\mathrm{tt}}, \mathbf{r}^{\mathrm{t}})$ 
			implemented via applying SF-HOSVD to TT compressed tensor denoted as $P_\mathrm{TT}(\mathcal{X})$, then
			\begin{align*}
				&	\|\mathcal{X} - P(\mathcal{X})\| \le \|\mathcal{X} - P_{\mathrm{TT}}(\mathcal{X}) \| + 
				\|P_{\mathrm{TT}}(\mathcal{X}) - P(\mathcal{X}) \| \le  \\
				&\sqrt{d - 1} \inf_{\mathcal{Y}\colon  \mathrm{TTrank}\, (\mathcal{Y}) \preceq \mathbf{r}^{\mathrm{tt}}} 
				\| \mathcal{X} - \mathcal{Y}\| + \|P_{\mathrm{TT}}(\mathcal{X}) - P(\mathcal{X}) \| \le \\
				&\sqrt{d - 1} \inf_{\mathcal{Y}\colon  \mathrm{SFETTrank}\, (\mathcal{Y}) \preceq \mathbf{r}} 
				\| \mathcal{X} - \mathcal{Y}\|	+ \|P_{\mathrm{TT}}(\mathcal{X}) - P(\mathcal{X}) \|.
			\end{align*}
			\begin{align*}
				& \|P_{\mathrm{TT}}(\mathcal{X}) - P(\mathcal{X}) \| \le  
				\sqrt{d} \inf_{\mathcal{Y}\colon  \mathrm{SFrank}\, (\mathcal{Y}) \preceq \mathbf{r}^{\mathrm{t}}} 
				\| P_{\mathrm{TT}}(\mathcal{X}) - \mathcal{Y}\| =  \\
				&\sqrt{d} \inf_{\mathcal{Y}\colon  \mathrm{SFrank}\, (\mathcal{Y}) \preceq \mathbf{r}^{\mathrm{t}}} 
				\| P_{\mathrm{TT}}(\mathcal{X}) - \mathcal{X} + \mathcal{X} - \mathcal{Y}\| \le \\
				&\sqrt{d}\|P_{\mathrm{TT}}(\mathcal{X}) - \mathcal{X} \| + 
				\sqrt{d}\inf_{\mathcal{Y}\colon  \mathrm{SFrank}\, (\mathcal{Y}) \preceq \mathbf{r}^{\mathrm{t}}} 
				\| \mathcal{X} - \mathcal{Y}\| \le \\
				&\sqrt{d}\sqrt{d - 1} \inf_{\mathcal{Y}\colon  \mathrm{SFETTrank}\, (\mathcal{Y}) \preceq \mathbf{r}} 
				\| \mathcal{X} - \mathcal{Y}\| + 
				\sqrt{d}\inf_{\mathcal{Y}\colon  \mathrm{SFETTrank}\, (\mathcal{Y}) \preceq \mathbf{r}} 
				\| \mathcal{X} - \mathcal{Y}\|.
			\end{align*}

			Therefore
			\begin{equation*}
				\|\mathcal{X} - P(\mathcal{X})\| \le \left(\sqrt{d} + \sqrt{d}\sqrt{d - 1 } +\sqrt{d -1} \right)
				\inf_{\mathcal{Y}\colon  \mathrm{SFETTrank}\, (\mathcal{Y}) \preceq \mathbf{r}} 
				\| \mathcal{X} - \mathcal{Y}\|.
			\end{equation*}

			Now let $P(\mathcal{X}) = P_{\mathbf{r}}^{\mathtt{sf-ett-hosvd}} (\mathcal{X})$ 
			be a SF-ETT approximation of rank $\mathbf{r} = (\mathbf{r}^{\mathrm{tt}}, \mathbf{r}^{\mathrm{t}})$ 
			obtained by applying TT compression onto core 
			$\mathcal{G}$ of SF-Tucker approximation $P_{\mathrm{SF}}(\mathcal{X})$ 
			of $\mathcal{X}$  with rank equal to $\mathbf{r}^{\mathrm{t}}$, then
			\begin{align*}
				&\|\mathcal{X} - P(\mathcal{X})\| \le \|\mathcal{X} - P_{SF}(\mathcal{X}) \| + 
				\|P_{\mathrm{SF}}(\mathcal{X}) - P(\mathcal{X}) \| \le  \\
				&\sqrt{d} \inf_{\mathcal{Y}\colon  \mathrm{SFrank}\, (\mathcal{Y}) \preceq \mathbf{r}^{\mathrm{t}}} 
				\| \mathcal{X} - \mathcal{Y}\| + \|P_{\mathrm{SF}}(\mathcal{X}) - P(\mathcal{X}) \| \le \\
				&\sqrt{d} \inf_{\mathcal{Y}\colon  \mathrm{SFETTrank}\, (\mathcal{Y}) \preceq \mathbf{r}} 
				\| \mathcal{X} - \mathcal{Y}\|	+ \|P_{\mathrm{SF}}(\mathcal{X}) - P(\mathcal{X}) \|.
			\end{align*}

			Now if $P_{\mathrm{SF}} \left( \mathcal{X} \right) = \llbracket \mathcal{G}; U^{(1)}, \dotsc, U  \rrbracket$, 
			where $U^{(1)}, \dotsc, U$ --- orthogonal matrices then
			\begin{align*}
				& \|P_{\mathrm{SF}}(\mathcal{X}) - P(\mathcal{X}) \| =  
				\left\| \left\llbracket  \mathcal{G}; U^{(1)}, \dotsc, U\right\rrbracket 
				- \left\llbracket P_{\mathrm{TT}}\left(\mathcal{G}\right); U^{(1)}, \dotsc, U\right\rrbracket\right\| = \\
				&\left\| \left\llbracket  \mathcal{G}; U^{(1)}, \dotsc, U\right\rrbracket 
				-P_{\mathrm{TT}}\left(\left\llbracket \mathcal{G}; U^{(1)}, \dotsc, U\right\rrbracket\right)\right\| 
				\le \\
				&\sqrt{d - 1} \inf_{\mathcal{Y}\colon  \mathrm{TTrank}\, (\mathcal{Y}) \preceq \mathbf{r}^{\mathrm{tt}}} 
				\| P_{\mathrm{SF}}\left(\mathcal{X} \right) - \mathcal{Y}\| \le  \\
				&\sqrt{d - 1} \inf_{\mathcal{Y}\colon  \mathrm{SFETTrank}\, (\mathcal{Y}) 
				\preceq \mathbf{r}} 
				\left\| P_{\mathrm{SF}}\left(\mathcal{X} \right) 
				- \mathcal{Y}\right\|\le \sqrt{d - 1}\|P_{\mathrm{SF}}(\mathcal{X}) - \mathcal{X}\| +  \\
				&\sqrt{d - 1} \inf_{\mathcal{Y}\colon  \mathrm{SFETTrank}\, (\mathcal{Y}) 
				\preceq \mathbf{r}} 
				\left\| \mathcal{X} - \mathcal{Y}\right\| \le \\
				&(\sqrt{d} \sqrt{d - 1} + \sqrt{d - 1})\inf_{\mathcal{Y}\colon  
				\mathrm{SFETTrank}\, (\mathcal{Y}) \preceq \mathbf{r}} 
				\| \mathcal{X} - \mathcal{Y}\|.
			\end{align*}

			Therefore
			\begin{equation*}
				\|\mathcal{X} - P(\mathcal{X})\| \le \left(\sqrt{d} + \sqrt{d}\sqrt{d - 1 } +\sqrt{d -1} \right)
				\inf_{\mathcal{Y}\colon  \mathrm{SFETTrank}\, (\mathcal{Y}) \preceq \mathbf{r}} 
				\| \mathcal{X} - \mathcal{Y}\|.
			\end{equation*}

		\end{proof}

        	\begin{remark}\label{remark:sfett_hosvd_two_ways}
			The compression algorithm justified by Theorem~\ref{theorem:sfett_hosvd} can be implemented in two ways. The
			first way is to compress tensor $\mathcal{X}$ into TT decomposition
			\begin{equation*}
				\hat{\mathcal{X}} = \mathcal{T}\left(\hat{W}^{(1)}, \dotsc,\hat{W}^{(d)}\right)
			\end{equation*}
			and then apply SF-HOSVD algorithm to the tensor
			\begin{equation*}
				\hat{\mathcal{X}} = 
				\left\llbracket\mathcal{T}\left(\hat{W}^{(1)}, \dotsc,\hat{W}^{(d)}\right); I, \dotsc, I\right\rrbracket.
			\end{equation*}
			The second way is to compress the input tensor via SF-HOVSD first and then apply TT 
			compression to the core of the obtained approximation. 
			Both approaches will ensure quasi-optimal approximation with the same quasi-optimality constant.
		\end{remark}

		If one has a tensor already in the SF-ETT format, it is possible to efficiently apply SF-ETT-SVD without forming full tensors. This operation is called SF-ETT-Rounding and has asymptotic complexity
		\begin{equation*}
			\mathcal{O}\left(d (r^{\mathrm{tt}})^2 r^{\mathrm{t}}(r^{\mathrm{tt}}  + r^{\mathrm{t}}) + 
			N (r^{\mathrm{t}})^2 (d_{\mathrm{t}} + 1)\right),
		\end{equation*}
		where 
		$r^{\mathrm{t}} = \max_i r^{\mathrm{t}}_{i}$, $r^{\mathrm{tt}} = \max_j r^{\mathrm{tt}}_{j}$, $N = \max_i n_i$.

		The SF-ETT Rounding algorithm is described in Algorithm~\ref{alg:sfett_rounding} and can be logically
		split into two stages: TT and SF-Tucker roundings. The first stage is fully described in the first line:
		TT rank is rounded to the target rank via $\mathtt{TTRound}$ from \citep{oseledets2010tt}. 
		All of the other steps are devoted to the second stage.

		This stage begins with the computation of unconstrained core $\overline{W}^{(k)}$ 
		in $k$-or-thogonal representation of 
		SF-ETT tensor for every $k \in \overline{1, d}$ defined in \eqref{eq:sfett_format_ortho} (rows $2-6$).
		According to \citep{peshekhonov2024training}, the algorithm should compute SVD of 
		$\mathcal{X}_{(k)}$ if $k \le d_t$ and SVD of $\begin{bmatrix} 
			\mathcal{X}_{(d_t + 1)} &\mathcal{X}_{(d_t + 2)} & \dots & \mathcal{X}_{(d)}\nonumber
			\end{bmatrix}$. 
		The main idea is to obtain this decomposition without forming dense Tucker core 
		out of its TT format. Suppose that the tensor $\mathcal{X}$ is stored in the $k$-orthogonal format 
		defined in \eqref{eq:sfett_format_ortho} 
		\begin{equation*}
			\mathcal{X}	= \left\llbracket 
			\mathcal{T}\left( W_L^{(1)}, \dotsc, \overline{W}_L^{(k)}, \dotsc, W_R^{(k)} \right);
			U^{(1)}, \dotsc, U
			 \right\rrbracket.
		\end{equation*}
		One may write its $k$-th matricization for $k \le d_t$ as follows
		\begin{equation*}
			\mathcal{X}_{(k)} = U^{(k)} \overline{W}^{(k)}_{(2)} M^{(k)},
		\end{equation*}
		where
		\begin{multline*}
			M^{(k)} = \\
			\mathcal{G}_{\ge k+1}^\top \left( U^\top \otimes_K \dotsm \otimes_K \left(U^{(k + 1)}\right)^\top \right) 
				\otimes_K 
				\mathcal{G}_{ \le k - 1}^\top\left( \left(U^{(k - 1)}\right)^\top 
			\otimes_K \dotsm \otimes_K \left(U^{(1)}\right)^\top \right)
		\end{multline*}
		is a matrix with orthonormal rows, so 
		\begin{equation*}
			\mathtt{SVD}\left(\mathcal{X}_{(k)} \right)=U^{(k)} \mathtt{SVD}\left(\overline{W}^{(k)}_{(2)} \right) M_k.
		\end{equation*}
		This structure is also preserved for the shared Tucker rank:
		\begin{align*}
			&\begin{bmatrix} 
				\mathcal{X}_{(d_t + 1)} & \dots & \mathcal{X}_{(d)}
			\end{bmatrix} = U\begin{bmatrix} 
			 \overline{W}^{(d_t + 1)}_{(2)}   &
			\dots & 
			 \overline{W}^{(d)}_{(2)}   
			\end{bmatrix}
            \mathtt{bdiag}(M^{(d_t + 1)}, \dots, M^{(d)}).
		\end{align*}
		Due to orthogonality of $U$ and a block-diagonal matrix on the right hand side, the SVD of $\begin{bmatrix} \mathcal{X}_{(d_t + 1)} &\mathcal{X}_{(d_t + 2)} & \dots & \mathcal{X}_{(d)}
			\end{bmatrix}$ can be obtained from the SVD of $\begin{bmatrix} 
			 \overline{W}^{(d_t + 1)}_{(2)}   &
			 \overline{W}^{(d_t + 2)}_{(2)}   &
			\dots & 
			 \overline{W}^{(d)}_{(2)}   
			\end{bmatrix}$.
		The lines from $7$ to $12$ are devoted to the rounding of SF-Tucker modes corresponding 
		to the non-shared Tucker factors via the SVD truncation. 
        The rounding for the shared SF-Tucker mode is described in the lines $13$ to~$18$.

		\begin{algorithm}[tp]
			\caption{SF-ETT Rounding}
			\label{alg:sfett_rounding}
			\textbf{Require:}
			\begin{enumerate}
				\item[]$\mathcal{X} = \left\llbracket \mathcal{T}\left( W^{(1)}, \dotsc, W^{(d)}\right); 
					U^{(1)}, \dotsc, U^{(d_t)}, U, \dotsc, U\right\rrbracket$ --- $d$-orthogonal SF-ETT decomposition.
				\item[]$\mathbf{r}=
					\left(r^{\mathtt{tt}}_{1}, \dotsc, r^{\mathtt{tt}}_{d}, r^{\mathtt{t}}_{1}, 
					\dots r^{\mathtt{t}}_{d_t}, r^{\mathtt{t}}_{s}\right)
					= \left(\mathbf{r}^{\mathtt{tt}}, \mathbf{r}^{\mathtt{t}}\right)$ --- target rank.
			\end{enumerate}
			\textbf{Ensure:}
			\begin{enumerate}
				\item[]  $P_{\mathbf{r}}^{\mathtt{sf-ett-hosvd}} ( \mathcal{X} )$ --- SF-ETT with target rank.
			\end{enumerate}
			\textbf{Function:}
			\begin{algorithmic}[1]
				\State $\left[\hat{W}^{(1)},  \dotsc, \hat{W}^{(d)}\right] 
				:= \mathtt{TTRound}\left(\left[W^{(1)}_L, \dotsc, W^{(d)}\right], 
				\mathbf{r}^{\mathtt{tt}}\right)$;
				\Comment{$\mathcal{O}\left(d r^{\mathtt{t}}(r^{\mathtt{tt}})^3\right)$}
				\State $\mathtt{NonOrthCores} := \left[\hat{W}^{(1)}\right]$;
				\For {$i = 2, \dots, d$}
					\State $\left[\hat{W}^{(1)}, \dotsc, \hat{W}^{(d)}\right] := 
					\mathtt{OrthogonalizeTT}\left(\left[\hat{W}^{(1)}, \dotsc, \hat{W}^{(d)}\right], i\right)$;
					\Comment{$\mathcal{O}\left(r^{\mathtt{t}}\left(r^{\mathtt{tt}}\right)^3\right)$}
					\State $\mathtt{NonOrthCores} := [\mathtt{NonOrthCores}, \hat{W}^{(i)}]$;
				\EndFor 
				\For {$i = 1, \dots, d_t$}
					\State $\mathtt{MatricizedCore} := \mathtt{Matricize}\left(\mathtt{NonOrthCores}[i], 2\right)$;
					\State $Y, \Sigma, V^\top := \mathtt{SVD}\left(\mathtt{MatricizedCore}, r^{\mathtt{t}}_{i}\right)$;
					\Comment{$\mathcal{O}\left(\left(r^{\mathtt{t}}r^{\mathtt{tt}}\right)^2\right)$}
					\State $W^{(i)} := \left\llbracket \hat{W}^{(i)}; I, Y^\top, I \right\rrbracket$; 
					\Comment{$\mathcal{O}\left((r^\mathtt{t}r^\mathtt{tt})^2\right)$}
					\State $U^{(i)} := U^{(i)}Y$; 
					\Comment{$\mathcal{O}\left(\mathtt{N}(r^\mathtt{t})^2\right)$}
				\EndFor
				\State $\mathtt{SharedFactor} =
				\left[ \mathtt{Matricize}\left(\mathtt{NonOrthCores}[d_{t} + 1], 2\right) \right., \dotsc,$ \\
					\hspace{2.7cm}$\left.\mathtt{Matricize}(\mathtt{NonOrthCores}[d],2) \right];$
				\State $Y, \Sigma, V^\top := \mathtt{SVD}(\mathtt{SharedFactor}, r^{\mathtt{t}}_{s})$;
				\Comment{$\mathcal{O}(d_s(r^{\mathtt{t}}r^{\mathtt{tt}})^2)$}
				\State $U:= UY$; \Comment{$\mathcal{O}(N(r^t)^2)$}
				\For {$i = d_{t + 1}, \dots, d$}
					\State $W^{(i)} := \left\llbracket \hat{W}^{(i)}; I, Y^\top, I \right\rrbracket$;
					\Comment{$\mathcal{O}\left((r^{\mathtt{tt}}r^\mathtt{t})^2\right)$} 
				\EndFor
				\State\Return $\left\llbracket \mathcal{T}\left( W^{(1)}, \dotsc, W^{(d)}\right); 
					U^{(1)}, \dotsc, U^{(d_t)}, U, \dotsc, U\right\rrbracket$.
			\end{algorithmic}
		\end{algorithm}

\section{Shared Factor Extended Tensor Train manifold}\label{sec:sfett_manif}
		The set of fixed rank $\mathbf{r}^{t}$ Tucker tensors
			$\mathrm{T}\, (\mathbf{r}^{t}) 
			\subseteq \mathbb{R}^{n_1\times \dotsc\times n_{d_t}\times n_{s}\times \dotsc\times n_s}$
		forms a smooth embedded submanifold \citep{steinlechner2016riemannian}.  
        In what follows, we assume that $\mathbf{r}^{t} = (r^t_1,\dots,r^t_{d_t}, r^t_s,\dots, r^t_s)$ and introduce the notation $\mathbf{r}^{ts} = (r^t_1,\dots,r^t_{d_t}, r^t_s)$ for the SF-Tucker rank. Then the set of fixed rank $\mathbf{r}^{ts}$ SF-Tucker tensors $
			\mathrm{SFT}\, (\mathbf{r}^{ts})
			\subset \mathrm{T}\, (\mathbf{r}^{t})$
		is also a smooth manifold \citep{peshekhonov2024training}
		embedded into the linear space 
		$\mathbb{R}^{n_1\times \dotsc\times n_{d_t}\times n_{s}\times \dotsc\times n_s}$. Analogically
			$\mathrm{ETT}\, (\mathbf{r}^{tt}, \mathbf{r}^{t})
			\subset \mathrm{T}\, (\mathbf{r}^{t})$
		denotes a smooth tensor manifold of fixed 
		ETT rank $(\mathbf{r}^{tt}, \mathbf{r}^{t})$, which is the HT manifold~\citep{uschmajew2013geometry} 
		embedded into the linear space 
		$\mathbb{R}^{n_1\times \dotsc\times n_{d_t}\times n_{s}\times \dotsc\times n_s}$.
		Finally,
			$\mathrm{SFETT}\, (\mathbf{r}^{tt}, \mathbf{r}^{ts}) 
			\subset \mathrm{T}\, (\mathbf{r}^{t})$
		stands for the set of tensors of fixed SF-ETT rank 
		$(\mathbf{r}^{tt}, \mathbf{r}^{ts})$.
		\subsection{Smoothness of the SF-ETT manifold}%
		\label{sub:sfett_manifold_smoothness}
        The purpose of this section is to introduce a smooth structure on the set of all tensors that can be represented in the SF-ETT 
		format with the fixed SF-ETT rank.
		
		\begin{lemma}\label{lemma:sfett_as_intersection}
			Under the rank constraints above, the $\mathrm{SFETT}$ set represents the following intersection:
			\begin{equation*}
				\mathrm{SFETT}\, (\mathbf{r}^{tt}, \mathbf{r}^{ts}) = \mathrm{SFT}\,(\mathbf{r}^{ts})
				\cap\mathrm{ETT}\, (\mathbf{r}^{tt}, \mathbf{r}^{t}).
			\end{equation*}
		\end{lemma}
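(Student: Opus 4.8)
The plan is to prove the set equality by double inclusion, using the rank characterization of Theorem~\ref{theorem:sfett_ranks} as the main bridge between the three formats.

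For the inclusion $\mathrm{SFETT}(\mathbf{r}^{tt}, \mathbf{r}^{ts}) \subseteq \mathrm{SFT}(\mathbf{r}^{ts}) \cap \mathrm{ETT}(\mathbf{r}^{tt}, \mathbf{r}^t)$, I would take an arbitrary $\mathcal{X}$ with SF-ETT decomposition \eqref{eq:sfett_format} and exhibit it simultaneously as an SF-Tucker and an ETT tensor of the claimed ranks. Contracting the TT-cores $W^{(1)}, \dots, W^{(d)}$ into a single dense Tucker core $\mathcal{G} = \mathcal{T}(W^{(1)}, \dots, W^{(d)})$ turns \eqref{eq:sfett_format} into the SF-Tucker decomposition \eqref{eq:sftucker_format_cool_notation}, while simply forgetting that the last $d_s$ factors coincide turns it into an ordinary ETT decomposition \eqref{eq:notation_ett_format}. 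It then remains to check that the resulting ranks are exactly $\mathbf{r}^{ts}$ and $(\mathbf{r}^{tt}, \mathbf{r}^t)$. This is immediate from Theorem~\ref{theorem:sfett_ranks}, since the matricization ranks $r^t_i$, the concatenated shared rank $r^t_s$, and the unfolding ranks $r^{tt}_i$ are computed there directly from $\mathcal{X}$ and agree with the defining ranks of the SF-Tucker and ETT manifolds; recall that all three sets sit inside $\mathrm{T}(\mathbf{r}^t)$, so the individual shared-mode matricization ranks are pinned to $r^t_s$ and the ETT rank is well defined.

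For the reverse inclusion, I would start from $\mathcal{X} \in \mathrm{SFT}(\mathbf{r}^{ts}) \cap \mathrm{ETT}(\mathbf{r}^{tt}, \mathbf{r}^t)$ and reconstruct an SF-ETT decomposition of the correct rank. Membership in $\mathrm{SFT}(\mathbf{r}^{ts})$ supplies full-column-rank factors $U^{(1)}, \dots, U^{(d_t)}, U$ (with $U$ shared) and a dense Tucker core $\mathcal{G}$ of full multilinear rank with $\mathcal{X} = \llbracket \mathcal{G}; U^{(1)}, \dots, U^{(d_t)}, U, \dots, U \rrbracket$. The only missing ingredient for an SF-ETT representation is a TT decomposition of $\mathcal{G}$ of TT-rank exactly $\mathbf{r}^{tt}$. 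Here I would invoke the identity $\mathrm{rank}(\mathcal{X}^{<k>}) = \mathrm{rank}(\mathcal{G}^{<k>})$ established inside the proof of Theorem~\ref{theorem:sfett_ranks}, which holds precisely because the Tucker factors have full column rank and therefore cancel in the skeleton factorization \eqref{eq:proof_sfett_ranks_1}. Since $\mathcal{X} \in \mathrm{ETT}(\mathbf{r}^{tt}, \mathbf{r}^t)$ forces $\mathrm{rank}(\mathcal{X}^{<k>}) = r^{tt}_k$, the core $\mathcal{G}$ has TT-rank $\mathbf{r}^{tt}$; decomposing it as $\mathcal{G} = \mathcal{T}(W^{(1)}, \dots, W^{(d)})$ and substituting back yields an SF-ETT decomposition \eqref{eq:sfett_format} whose ranks, again by Theorem~\ref{theorem:sfett_ranks}, are exactly $(\mathbf{r}^{tt}, \mathbf{r}^{ts})$, so $\mathcal{X} \in \mathrm{SFETT}(\mathbf{r}^{tt}, \mathbf{r}^{ts})$.

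I expect the reverse inclusion to be the delicate step. The crux is transferring rank information between the full tensor $\mathcal{X}$ and the compressed core $\mathcal{G}$: one must argue that contracting with the full-column-rank factors neither raises nor lowers any unfolding rank, so that the TT-rank condition coming from the ETT side transfers cleanly to $\mathcal{G}$, while the sharing constraint inherited from the SF-Tucker side is preserved when the core is re-expressed in TT form. Keeping the two families of rank conditions consistent -- the concatenated shared rank from the SF-Tucker side versus the individual matricization ranks underlying the ETT and Tucker membership -- is exactly where the shared-factor structure could cause trouble, and it is what the common ambient manifold $\mathrm{T}(\mathbf{r}^t)$ is there to reconcile.
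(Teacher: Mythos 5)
Your proposal is correct, and its forward inclusion is exactly the paper's: contract the TT cores into a dense core to exhibit SF-Tucker membership, forget the sharing to exhibit ETT membership. For the reverse inclusion, however, you take a genuinely different route. The paper's argument is purely constructive and never computes a rank: using the orthonormality of the SF-Tucker factors it writes $\mathcal{X} = \left\llbracket \mathcal{X}; U^{(1)}{U^{(1)}}^\top, \dotsc, UU^\top\right\rrbracket$, substitutes the TT/ETT representation of the inner $\mathcal{X}$, and contracts each ${U^{(i)}}^\top$ (resp.\ $U^\top$) into the adjacent TT core, so that the shared factor $U$ is left outside and the regrouped expression is literally an SF-ETT decomposition. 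Your argument instead transfers the rank condition to the SF-Tucker core $\mathcal{G}$: the identity $\mathrm{rank}\,(\mathcal{X}^{<k>}) = \mathrm{rank}\,(\mathcal{G}^{<k>})$ (full-column-rank Kronecker factors cancel, as in the proof of Theorem~\ref{theorem:sfett_ranks}) pins the TT-rank of $\mathcal{G}$ to $\mathbf{r}^{tt}$, and an exact TT-SVD of $\mathcal{G}$ then reassembles the SF-ETT decomposition. The two mechanisms are equivalent in substance --- the paper's contraction step implicitly builds precisely a TT decomposition of $\mathcal{G}$ with ranks at most $r^{tt}_k$ --- but your version makes the rank bookkeeping explicit, which is a real advantage: strictly speaking, the paper's construction only exhibits a decomposition with rank parameters bounded by $(\mathbf{r}^{tt}, \mathbf{r}^{ts})$, and the fact that the minimal (i.e.\ actual) SF-ETT rank equals this tuple requires exactly the appeal to Theorem~\ref{theorem:sfett_ranks} that you spell out. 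Conversely, the paper's projector route avoids invoking TT-SVD exactness and rank identities altogether. Both treatments also share the same delicate point in the forward direction, which you handle at least as carefully as the paper does: membership in $\mathrm{ETT}\,(\mathbf{r}^{tt}, \mathbf{r}^{t})$ needs each individual shared-mode matricization rank to equal $r^t_s$, not merely the concatenated rank; this is not automatic from the SF-ETT rank definition and is covered only by the section's standing assumption that all these sets sit inside $\mathrm{T}\,(\mathbf{r}^{t})$, which you invoke explicitly while the paper's proof passes over it in silence.
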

		\begin{proof}
			Let $\mathcal{X} \in \mathrm{SFETT}\, (\mathbf{r}^{tt}, \mathbf{r}^{ts})$. 
            Then by the definition of
			the ETT format:
			\begin{equation*}
				\mathcal{X}	= 
				\left\llbracket 
					\mathcal{T} \left(W^{(1)}, \dotsc,W^{(d)}\right);
					U^{(1)}, \dotsc,  U^{(d_t)}, U, \dotsc, U
				\right\rrbracket 
				\Longrightarrow \mathcal{X} \in \mathrm{ETT}\, (\mathbf{r}^{tt}, \mathbf{r}^{t}).
			\end{equation*}
			One can also denote
			\begin{equation*}
				\mathcal{G} = \mathcal{T} \left(W^{(1)}, \dotsc,W^{(d)}\right),
			\end{equation*}
			so
			\begin{equation*}
				\mathcal{X}	= 
				\left\llbracket 
					\mathcal{G};
					U^{(1)}, \dotsc,  U^{(d_t)}, U, \dotsc, U
				\right\rrbracket \in \mathrm{SFT}\,(\mathbf{r}^{ts}).
			\end{equation*}
			This means that:
			\begin{equation*}
				\mathcal{X} \in \mathrm{SFT}\,(\mathbf{r}^{ts})
				\cap \mathrm{ETT}\, (\mathbf{r}^{tt}, \mathbf{r}^{t}).
			\end{equation*}
			Now let $\mathcal{X} \in \mathrm{SFT}\,(\mathbf{r}^{ts})$ and 
			$\mathcal{X} \in \mathrm{ETT}\, (\mathbf{r}^{tt}, \mathbf{r}^{t})$:
			\begin{align*}
					& \mathcal{X}	= 
					\left\llbracket 
						\mathcal{G};
						U^{(1)}, \dotsc,  U^{(d_t)}, U, \dotsc, U
					\right\rrbracket = \\
					& \left\llbracket 
						\mathcal{X};
						U^{(1)}{U^{(1)}}^\top, \dotsc,  U^{(d_t)}{U^{(d_t)}}^\top, UU^\top, \dotsc, UU^\top
					\right\rrbracket,
			\end{align*}
			where $U^{(1)}, \dotsc, U$ have orthonormal columns. Then by the definition of ETT
			\begin{align*}
				& \mathcal{X}	= 
				\left\llbracket 
					\mathcal{X};
					U^{(1)}{U^{(1)}}^\top, \dotsc,  U^{(d_t)}{U^{(d_t)}}^\top, UU^\top, \dotsc, UU^\top
				\right\rrbracket = \\
				& \left\llbracket 
					\mathcal{T}(W^{(1)}, \dotsc,W^{(d)});
					U^{(1)}{U^{(1)}}^\top, \dotsc,  U^{(d_t)}{U^{(d_t)}}^\top, UU^\top, \dotsc, UU^\top
				\right\rrbracket.
			\end{align*}
			Contracting the factors ${U^{(i)}}^\top$ and $U^\top$ with $W^{(i)}$, one can get
			\begin{equation*}
				\mathcal{X}	= 
				\left\llbracket 
					\mathcal{T}(\hat{W}^{(1)}, \dotsc,\hat{W}^{(d)});
					U^{(1)}, \dotsc,  U^{(d_t)}, U, \dotsc, U
				\right\rrbracket,
			\end{equation*}
			and, hence, $\mathcal{X} \in \mathrm{SFETT}\, (\mathbf{r}^{tt}, \mathbf{r}^{ts})$.
		\end{proof}
		The intersection of two smooth manifolds is not necessarily a smooth manifold. Fortunately
		there is a special type of manifold intersection that preserves smoothness.
		\begin{definition}[Transversality~\citep{lee2012transversality}]\label{def:transversality}
			Let $\mathcal{M}$ be a smooth manifold and $\mathcal{A}$, $\mathcal{B}$
			are two embedded submanifolds of $\mathcal{M}$. The $\mathcal{A} \cap \mathcal{B}$
			is a transversal intersection if  $\;\forall p \in \mathcal{A} \cap \mathcal{B}$:
			\begin{equation}
				 T_{p}{\mathcal{A}} + T_{p}{\mathcal{B}} = T_{p}{\mathcal{M}}.
			\end{equation}
		\end{definition}

		\begin{lemma}[Transversality of the intersection]\label{lemma:transversality}
			$\mathrm{SFT}\,(\mathbf{r}^{ts})$ and $\mathrm{ETT}\, (\mathbf{r}^{tt}, \mathbf{r}^{t})$ 
			intersect transversally in $\mathrm{T}\,(\mathbf{r}^{t})$, i.e., 
			$\;\forall \mathcal{X}	\in 
				\mathrm{SFT}\, (\mathbf{r}^{ts})\cap\mathrm{ETT}\, (\mathbf{r}^{tt}, \mathbf{r}^{t})$, we have
			\begin{equation}
				T_{\mathcal{X}} \mathrm{SFT}\, 
				(\mathbf{r}^{ts}) + T_{\mathcal{X}}\mathrm{ETT}\, (\mathbf{r}^{tt}, \mathbf{r}^{t})= 
				T_{\mathcal{X}}\mathrm{T}\, (\mathbf{r}^{t}).
			\end{equation}
		\end{lemma}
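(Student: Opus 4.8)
The plan is to reduce the transversality condition to an explicit description of the three tangent spaces involved, all regarded as linear subspaces of $T_{\mathcal{X}}\mathrm{T}\,(\mathbf{r}^{t})$. Since $\mathrm{SFT}\,(\mathbf{r}^{ts})$ and $\mathrm{ETT}\,(\mathbf{r}^{tt},\mathbf{r}^{t})$ are embedded submanifolds of $\mathrm{T}\,(\mathbf{r}^{t})$ by the results cited above, the inclusion $T_{\mathcal{X}}\mathrm{SFT}\,(\mathbf{r}^{ts}) + T_{\mathcal{X}}\mathrm{ETT}\,(\mathbf{r}^{tt},\mathbf{r}^{t}) \subseteq T_{\mathcal{X}}\mathrm{T}\,(\mathbf{r}^{t})$ is automatic, so only the reverse inclusion must be established. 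First I would fix an orthogonal Tucker representation $\mathcal{X} = \llbracket \mathcal{G}; U^{(1)},\dotsc,U^{(d_t)}, U,\dotsc,U\rrbracket$ and recall the standard parametrization of the Tucker tangent space: every $\tau \in T_{\mathcal{X}}\mathrm{T}\,(\mathbf{r}^{t})$ splits as a pure core variation plus independent factor variations,
\[
\tau = \llbracket \delta\mathcal{G}; U^{(1)},\dotsc,U\rrbracket + \sum_{k=1}^{d}\llbracket \mathcal{G}; U^{(1)},\dotsc,\delta U^{(k)},\dotsc\rrbracket,
\]
where $\delta\mathcal{G}$ is an arbitrary core-shaped tensor and each perturbation obeys the gauge condition ${U^{(k)}}^\top \delta U^{(k)} = 0$ (with $U^{(k)} = U$ for $k > d_t$). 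This decomposition is what I want to match against the two submanifold tangent spaces.

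Next I would write down the two submanifold tangent spaces in the same coordinates. Differentiating the SF-Tucker parametrization, whose core is unconstrained and whose $d_s$ shared factors are tied to a single matrix, shows that $T_{\mathcal{X}}\mathrm{SFT}\,(\mathbf{r}^{ts})$ contains every pure core variation $\llbracket \delta\mathcal{G}; U^{(1)},\dotsc,U\rrbracket$ with $\delta\mathcal{G}$ arbitrary (take all factor perturbations to be zero, which trivially satisfies the sharing constraint). Differentiating the ETT parametrization, whose factors are all free but whose core is constrained to the fixed-TT-rank manifold, shows that $T_{\mathcal{X}}\mathrm{ETT}\,(\mathbf{r}^{tt},\mathbf{r}^{t})$ contains every pure factor variation $\sum_{k=1}^{d}\llbracket \mathcal{G};\dotsc,\delta U^{(k)},\dotsc\rrbracket$ with independent gauged $\delta U^{(k)}$ (take the core perturbation to be $0$, which lies in the TT tangent space because it is a linear subspace). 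The key structural observation is that the two defining constraints act on complementary pieces of the Tucker tangent space: sharing restricts only the factor directions while leaving the core entirely free, whereas the fixed-TT-rank constraint restricts only the core direction while leaving every factor entirely free.

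Combining these, the first summand of $\tau$ lies in $T_{\mathcal{X}}\mathrm{SFT}\,(\mathbf{r}^{ts})$ and the second lies in $T_{\mathcal{X}}\mathrm{ETT}\,(\mathbf{r}^{tt},\mathbf{r}^{t})$, so $\tau$ belongs to their sum; this gives the reverse inclusion and hence equality. The only genuinely delicate step is the correct identification of the three tangent spaces, and in particular two points must be verified: that a variation of the \emph{single} shared factor produces the \emph{same} perturbation in every shared mode (so that sharing really constrains only factor directions and leaves the core free), and that at a point of the intersection the ETT factor directions remain unconstrained and independent even across the shared modes, matching exactly the shared-mode factor directions of the ambient Tucker tangent space. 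Once these descriptions are in hand, the transversality identity follows immediately from the complementary-constraint observation, with no dimension count required.
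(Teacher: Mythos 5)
Your proposal is correct, and it follows the same overall strategy as the paper: identify the three tangent spaces via their known parametrizations (Tucker core variation plus gauged factor variations; SF-Tucker with a single shared factor variation; ETT with a TT-tangent core variation), observe that the inclusion $T_{\mathcal{X}}\mathrm{SFT} + T_{\mathcal{X}}\mathrm{ETT} \subseteq T_{\mathcal{X}}\mathrm{T}$ is immediate, and prove the reverse inclusion by explicitly splitting an arbitrary Tucker tangent vector. The one genuine difference is the splitting itself. The paper assigns the core variation \emph{together with} the non-shared factor variations $\dot{Z}^{(1)},\dotsc,\dot{Z}^{(d_t)}$ to the SF-Tucker part, and only the shared-mode factor variations $\dot{Z}^{(d_t+1)},\dotsc,\dot{Z}^{(d)}$ to the ETT part; you instead assign the core variation alone to the SF-Tucker part and \emph{all} $d$ factor variations to the ETT part. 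Both splittings are valid for the same underlying reason — the ETT tangent space has free, mutually independent factor directions in every mode (including the shared ones), and both the Tucker and SF-Tucker tangent spaces have an unconstrained core direction — and both avoid any dimension count. Your version is slightly cleaner in that it makes the ``complementary constraints'' principle fully symmetric (sharing restricts only factors, fixed TT rank restricts only the core), whereas the paper's choice isolates exactly the directions that SF-Tucker cannot supply, namely independent perturbations across the shared modes. The two verification points you flag at the end (the shared-factor variation acting identically in all shared modes, and the independence of ETT factor directions at intersection points) are precisely what the cited parametrizations of \citep{peshekhonov2024training} and \citep{uschmajew2013geometry} provide, so your argument closes without a gap.
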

		\begin{proof}
			Let $X \in \mathrm{SFETT}\, (\mathbf{r}^{tt}, \mathbf{r}^{ts})$, then
			\begin{equation*}
				\mathcal{X}	= 
				\left\llbracket 
					\mathcal{G};
					U^{(1)}, \dotsc,  U^{(d_t)}, U, \dotsc, U
				\right\rrbracket, \quad 
				\mathcal{G} : = \mathcal{T} \left(W^{(1)}, \dotsc,W^{(d)}\right).
			\end{equation*}
            
			According to \citep{uschmajew2013geometry},
			we have
			\begin{align*}
				&	T_{\mathcal{X}} \mathrm{T} = \left\{ \xi \bigg| 
				\xi =  \left\llbracket 
							\dot{\mathcal{G}}; U^{(1)}, \dotsc,  U^{(d_t)}, U, \dotsc, U
						\right\rrbracket\right. + \\
				&\sum_{i = 1}^{d} 
						\left.\left\llbracket 
						\mathcal{G}; U^{(1)}, \dotsc,  \dot{Z}^{(i)}, \dotsc, U, \dotsc, U
					\right\rrbracket,\quad
				\left(\dot{Z}^{(i)}\right)^\top U^{(i)} = 0, \forall i \in \overline{1,d_t},\right. \\ 
				&\left.\left(\dot{Z}^{(j)}\right)^\top U = 0, 
						\forall j \in \overline{d_t + 1,d}\right\}, 
			\end{align*}
			i.e., every $\xi \in T_{\mathcal{X}} \mathrm{T}$ can be described via the tuple 
				$\left( \dot{\mathcal{G}}, \dot{Z}^{(1)}, \dotsc, \dot{Z}^{(d)}\right)$.
			The tangent space for $\mathrm{SFT}\, (\mathbf{r}^{ts})$ from
			\citep{peshekhonov2024training} can be written as
			\begin{align*}
				&	T_{\mathcal{X}} \mathrm{SFT} = \left\{ \eta \bigg| 
				\eta =  \left\llbracket 
							\dot{\mathcal{G}}; U^{(1)}, \dotsc,  U^{(d_t)}, U, \dotsc, U
						\right\rrbracket\right. + \\
				&\sum_{i = 1}^{d_t} 
						\left.\left\llbracket 
						\mathcal{G}; U^{(1)}, \dotsc,  \dot{U}^{(i)}, \dotsc, U^{(d_t)}, 
						\dotsc, U, \dotsc, U
					\right\rrbracket+\right.\\
				&\sum_{i = d_t + 1}^{d} 
						\left.\left\llbracket 
						\mathcal{G}; U^{(1)}, \dotsc,  \dot{U}^{(d_t)}, U, \dotsc, 
						\underbrace{\dot{U}}_{i},  \dotsc U
					\right\rrbracket,\right.\\
				&\left.\left(\dot{U}^{(i)}\right)^\top U^{(i)} = 0, \forall i \in \overline{1,d_t},\quad
				\dot{U}^\top U = 0\right\},
			\end{align*}
			and, hence, every $\eta \in T_{\mathcal{X}} \mathrm{SFT}$ can be described via the tuple
				$( \dot{\mathcal{G}}, \dot{U}^{(1)}, \dotsc, \dot{U}^{(d_t)}, \dot{U})$.
			The tangent space for $\mathrm{ETT}(\mathbf{r}^{tt}, \mathbf{r}^{t})$ is a particular 
			case of the tangent space for manifold of fixed HT rank described in \citep{uschmajew2013geometry}
			\begin{align*}
				&	T_{\mathcal{X}} \mathrm{ETT} = \left\{ \zeta \bigg| 
				\zeta =  \left\llbracket 
				\dot{\mathcal{T}}_{\mathcal{X}}\left(\dot{W}^{(1)}, \dotsc, \dot{W}^{(d)} \right); 
				U^{(1)}, \dotsc,  U^{(d_t)}, U, \dotsc, U
						\right\rrbracket\right. + \\
				&\sum_{i = 1}^{d} 
						\left.\left\llbracket 
						\mathcal{G}; U^{(1)}, \dotsc,  \dot{Y}^{(i)}, \dotsc, U, \dotsc, U
					\right\rrbracket, \quad
				\left(\dot{Y}^{(i)}\right)^\top U^{(i)} = 0, \forall i \in \overline{1,d_t},\right.\\
					&\left.\left(\dot{Y}^{(j)}\right)^\top U = 0, 
				\forall j \in \overline{d_t + 1,d}, \quad
				L \left(W_L^{(k)} \right)^\top L \left(\dot{W}^{(k)} \right) = 0, k \in 
				\overline{1,d - 1}\right\}, 
			\end{align*}
			so every $\zeta \in T_{\mathcal{X}} \mathrm{ETT}$ can be described via the tuple
				$\left( \dot{W}^{(1)}, \dotsc, \dot{W}^{(d)}, \dot{Y}^{(1)}, \dotsc, \dot{Y}^{(d)}\right)$.

			Now let $\eta \in T_{\mathcal{X}}\mathrm{SFT}$, $\zeta \in T_{\mathcal{X}} \mathrm{ETT}$, then 
			\begin{align*}
				& \eta + \zeta  = \left\llbracket  
			\dot{\mathcal{T}}_{\mathcal{X}}\left(\dot{W}^{(1)}, \dotsc, \dot{W}^{(d)} \right) + \dot{\mathcal{G}}; 
				U^{(1)}, \dotsc,  U^{(d_t)}, U, \dotsc, U
				\right\rrbracket + \\
				& \sum_{i = 1}^{d_t} \left\llbracket  
				\mathcal{G}; 
				U^{(1)}, \dotsc,  \dot{U}^{(i)} +\dot{Y}^{(i)} , \dotsc,  U^{(d_t)}, U, \dotsc, U
				\right\rrbracket  + \\
				& \sum_{i=d_t + 1}^{d} \left\llbracket 
				\mathcal{G}; 
				U^{(1)}, \dotsc, U^{(d_t)}, U, \dots,\underbrace{\dot{U} + \dot{Y}^{(i)}}_{i}, \dotsc, U
				\right\rrbracket
			\end{align*}
			and can be interpreted as element of $T_{\mathcal{X}} \mathrm{T}$ in terms of the following tuple
			\begin{equation*}
				\left( \dot{\mathcal{T}}_{\mathcal{X}}\left(\dot{W}^{(1)}, \dotsc, \dot{W}^{(d)} \right) + \dot{\mathcal{G}}, 
				\dot{U}^{(1)} + \dot{Y}^{(1)}, \dotsc, \dot{U} + \dot{Y}^{(d)}\right).
			\end{equation*}
			As a result, we obtain $T_{\mathcal{X}} \mathrm{ETT} + T_{\mathcal{X}} \mathrm{SFT} \subseteq T_{\mathcal{X}} \mathrm{T}$. 
            
            Let
			$\xi$ be an arbitrary element of $T_{\mathcal{X}} \mathrm{T}$ given by the tuple $
				\left( \dot{\mathcal{G}}, \dot{Z}^{(1)}, \dotsc, \dot{Z}^{(d)}\right)$, 
			then 
			$\exists \; \hat{\eta} \in T_{\mathcal{X}} \mathrm{SFT}, \hat{\zeta} \in T_{\mathcal{X}} \mathrm{ETT}: \hat{\eta} + 
			\hat{\zeta} = \xi$. For example,  $\hat{\eta}$ may be given by a tuple
			\begin{equation*}
				\left( \dot{\mathcal{G}}, \dot{Z}^{(1)}, \dotsc, \dot{Z}^{(d_t)}, \underbrace{0, \dotsc, 0}_{d - d_t}\right)
			\end{equation*}
			and $\hat{\zeta}$ by
			\begin{equation*}
				\left(\underbrace{0, \dots, 0}_{d}, \underbrace{0, \dots, 0}_{d_t}, 
				\dot{Z}^{(d_t + 1)}, \dotsc,  \dot{Z}^{(d)}\right).
			\end{equation*}
			So $T_{\mathcal{X}} \mathrm{ETT} + T_{\mathcal{X}} \mathrm{SFT} \supseteq T_{\mathcal{X}} \mathrm{T}$.
		\end{proof}
		
		Uniting the results of Lemma~\ref{lemma:sfett_as_intersection} and Lemma~\ref{lemma:transversality}, one can obtain  
		the smooth structure on $\mathrm{SFETT}\, (\mathbf{r}^{tt}, \mathbf{r}^{ts})$.
	\begin{theorem}\label{theorem:sfett_manifold_sm_emb}
		The set $\mathrm{SFETT}\, (\mathbf{r}^{tt}, \mathbf{r}^{ts})$	
		forms a smooth manifold embedded 
		 into $\mathbb{R}^{n_1\times \dotsc\times n_{d_t}\times n_{s}\times \dotsc\times n_s}$ 
		 of the dimension
		 \begin{equation*}
			 \dim \left(\mathrm{SFETT}\, (\mathbf{r}^{\mathrm{tt}}, \mathbf{r}^{ts})\right) 
			 = \dim \left(\mathrm{TT}(\mathbf{r}^{tt})\right) +
             \sum\limits_{i = 1}^{d_t} r^t_i (n_i - r^t_i) + r^t_s (n_s - r^t_s),
		 \end{equation*}
		 where $d_s = d - d_t > 0$, $r_0^{tt} = r_d^{tt} = 1$ and
		 \begin{equation*}
			\dim \left(\mathrm{TT}(\mathbf{r}^{tt})\right)	=
			\sum\limits_{i = 1}^{d_t}r_{i - 1}^{tt}r^t_i r^{tt}_i + 
            r^t_s \sum\limits_{i = d_t + 1}^{d}r_{i-1}^{tt}r_{i}^{tt}
            - \sum\limits_{i = 1}^d {(r^{tt}_i)}^2.
		 \end{equation*}
	\end{theorem}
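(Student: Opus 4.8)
The plan is to combine the two preceding lemmas with the standard transversal intersection theorem of differential topology. By Lemma~\ref{lemma:sfett_as_intersection}, the set $\mathrm{SFETT}(\mathbf{r}^{tt}, \mathbf{r}^{ts})$ is exactly the intersection of the two embedded submanifolds $\mathrm{SFT}(\mathbf{r}^{ts})$ and $\mathrm{ETT}(\mathbf{r}^{tt}, \mathbf{r}^{t})$ of the ambient smooth manifold $\mathrm{T}(\mathbf{r}^{t})$, and by Lemma~\ref{lemma:transversality} this intersection is transversal in the sense of Definition~\ref{def:transversality}. The transversality theorem (e.g.\ \citep{lee2012transversality}) then guarantees that the intersection is itself a smooth embedded submanifold of $\mathrm{T}(\mathbf{r}^{t})$; since $\mathrm{T}(\mathbf{r}^{t})$ is in turn embedded in the linear space $\mathbb{R}^{n_1\times \dotsc\times n_{d_t}\times n_s\times \dotsc\times n_s}$, composing the embeddings shows that $\mathrm{SFETT}(\mathbf{r}^{tt}, \mathbf{r}^{ts})$ is a smooth embedded submanifold of that linear space. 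This settles the qualitative part of the statement.

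For the dimension I would use the fact that a transversal intersection has tangent space $T_{\mathcal{X}}\mathrm{SFETT} = T_{\mathcal{X}}\mathrm{SFT}\cap T_{\mathcal{X}}\mathrm{ETT}$, so that codimensions add and
\begin{equation*}
\dim\mathrm{SFETT} = \dim\mathrm{SFT} + \dim\mathrm{ETT} - \dim\mathrm{T}.
\end{equation*}
Rather than invoking three separate dimension formulas, the cleanest route is to compute $\dim\bigl(T_{\mathcal{X}}\mathrm{SFT}\cap T_{\mathcal{X}}\mathrm{ETT}\bigr)$ directly from the explicit parametrizations recorded in the proof of Lemma~\ref{lemma:transversality}. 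An element lying in both tangent spaces must, on one side, carry a core perturbation of the general SF-Tucker form $\dot{\mathcal{G}}$ and, on the other side, realize that same core perturbation as a TT-tangent vector $\dot{\mathcal{T}}_{\mathcal{X}}(\dot W^{(1)},\dots,\dot W^{(d)})$; this forces the core part to range precisely over the tangent space of the TT manifold of the core, contributing $\dim(\mathrm{TT}(\mathbf{r}^{tt}))$.

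The factor perturbations are matched next. The non-shared directions $\dot U^{(i)}$, $i\in\overline{1,d_t}$, are free modulo the gauge $(\dot U^{(i)})^\top U^{(i)} = 0$ in both descriptions, and contribute the Grassmannian dimensions $\sum_{i=1}^{d_t} r^t_i(n_i - r^t_i)$. The decisive point is the shared block: in $T_{\mathcal{X}}\mathrm{SFT}$ a single matrix $\dot U$ with $\dot U^\top U = 0$ is inserted into every shared slot, whereas $T_{\mathcal{X}}\mathrm{ETT}$ allows independent perturbations $\dot Y^{(d_t+1)},\dots,\dot Y^{(d)}$; membership in the intersection forces $\dot Y^{(d_t+1)} = \dots = \dot Y^{(d)} = \dot U$, so the $d_s$ potential Grassmannian contributions collapse to the single term $r^t_s(n_s - r^t_s)$. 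Summing the three contributions reproduces the claimed formula.

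I expect the main obstacle to be exactly this bookkeeping: correctly identifying the admissible core-perturbation subspace with the TT tangent space of the core, and verifying that the sharing constraint collapses $d_s$ copies of the shared Grassmannian to a single one (so the formula carries $r^t_s(n_s - r^t_s)$ once rather than $d_s$ times, and the shared mode enters $\dim(\mathrm{TT}(\mathbf{r}^{tt}))$ through the factor $r^t_s\sum_{i=d_t+1}^{d} r^{tt}_{i-1}r^{tt}_i$). A secondary point requiring care is the justification of the tangent-space identity $T_{\mathcal{X}}(\mathcal{A}\cap\mathcal{B}) = T_{\mathcal{X}}\mathcal{A}\cap T_{\mathcal{X}}\mathcal{B}$, which holds precisely because of the transversality established in Lemma~\ref{lemma:transversality}; without that hypothesis the intersection of tangent spaces could strictly exceed the tangent space of the intersection.
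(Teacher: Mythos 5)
Your proposal is correct, and the qualitative part (smoothness and embeddedness via Lemma~\ref{lemma:sfett_as_intersection}, Lemma~\ref{lemma:transversality}, and the transversal intersection theorem) is exactly the paper's argument. Where you genuinely diverge is the dimension count. The paper applies the codimension-additivity formula
$\dim \mathrm{SFETT} = \dim \mathrm{ETT} + \dim \mathrm{SFT} - \dim \mathrm{T}$
and then substitutes three explicit dimension formulas taken from the literature (the HT/ETT manifold, the SF-Tucker manifold, and the Tucker manifold), letting the large core terms $(r_s^t)^{d_s}\prod_{i=1}^{d_t} r_i^t$ cancel algebraically. You instead use that transversality gives $T_{\mathcal{X}}(\mathcal{A}\cap\mathcal{B}) = T_{\mathcal{X}}\mathcal{A}\cap T_{\mathcal{X}}\mathcal{B}$ and compute the intersection of tangent spaces directly by matching the gauged parametrizations: the core perturbation is forced into $T_{\mathcal{G}}\mathrm{TT}(\mathbf{r}^{tt})$, the non-shared factor perturbations contribute $\sum_{i=1}^{d_t} r^t_i(n_i - r^t_i)$, and the sharing constraint collapses the $d_s$ independent perturbations $\dot{Y}^{(d_t+1)},\dotsc,\dot{Y}^{(d)}$ to a single $\dot{U}$, leaving one copy of $r^t_s(n_s - r^t_s)$. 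Your route is more self-contained (it never needs the exponentially large core terms that only appear to cancel) and it makes structurally transparent why exactly one shared Grassmannian term survives; its price is that you must justify the injectivity of the gauged parametrization of $T_{\mathcal{X}}\mathrm{T}(\mathbf{r}^t)$ -- i.e., that equality of two Tucker tangent vectors under the gauge conditions forces componentwise equality of their tuples -- a standard fact, but one you should state explicitly since the whole component-matching argument rests on it. The paper's route sidesteps this by relying on ready-made dimension formulas and pure inclusion--exclusion.
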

   		\begin{proof}
			First of all, according to
			\citep[Definition $1.37$]{robbin2011introduction}, we have that $\mathrm{ETT}\, (\mathbf{r}^{tt}, \mathbf{r}^{t})$ and
			$\mathrm{SFT}\, ( \mathbf{r}^{ts})$ are smooth manifolds embedded into the $\mathrm{T}\, (\mathbf{r}^{t})$, which in turn is embedded into 
			$\mathbb{R}^{n_1\times \dotsc\times n_{d_t}\times n_{s}\times \dotsc\times n_s}$. 		
			Then we use the fact that $\mathrm{SFETT}\, ( \mathbf{r}^{tt} , \mathbf{r}^{ts})$ is the transversal
			intersection of these manifolds in $\mathrm{T}(\mathbf{r}^t)$. 
			Hence, according to \citep{pollack1974differential} $\mathrm{SFETT}\, ( \mathbf{r}^{tt} , \mathbf{r}^{ts})$ is 
			a smooth manifold embedded in $\mathrm{T}(\mathbf{r}^t)$ with the dimension 
			\begin{equation*}
				\dim \left(\mathrm{SFETT}(\mathbf{r}^{tt}, \mathbf{r}^{ts})\right) =  
				\dim \left(\mathrm{ETT}\, (\mathbf{r}^{tt}, \mathbf{r}^{t})\right) +
				\dim \left(\mathrm{SFT}\, (\mathbf{r}^{ts})\right)- 
				\dim \left(\mathrm{T}\, (\mathbf{r}^{t})\right),
			\end{equation*}
			where 
			\begin{align*}
				& \dim \left(\mathrm{ETT}\, (\mathbf{r}^{tt}, \mathbf{r}^{ts})\right) = 
                \dim \left(\mathrm{TT}(\mathbf{r}^{tt})\right)	+ 
                \sum\limits_{i = 1}^{d_t}r_i^t (n_i - r_i^t) + 
                d_s r_s^t(n_s - r_s^t),
				\\
				& \dim \left(\mathrm{SFT}\, (\mathbf{r}^{ts})\right)  = 
                (r_s^t)^{d_s} \prod_{i=1}^{d_t}r_i^t + 
                \sum\limits_{i = 1}^{d_t} r_i^t(n_i - r_i^t) + r_s^t (n_s - r_s^t),\\
				& \dim \left(\mathrm{T}\, (\mathbf{r}^{t})\right) = 
				(r_s^t)^{d_s} \prod_{i=1}^{d_t}r_i^t + 
                \sum\limits_{i = 1}^{d_t} r_i^t(n_i - r_i^t) + d_s r_s^t (n_s - r_s^t).
			\end{align*}
   		\end{proof}
	
	\subsection{Tangent space}\label{sub:sfett_tangent_space}
		A tangent space provides a local linear approximation of a smooth manifold at a given point. It consists of tangent vectors, which can be characterized as the first derivatives of smooth curves on the manifold passing through that point:
		\begin{equation*}
			T_{\mathcal{X}} \mathrm{SFETT}\, (\mathbf{r}^{tt}, \mathbf{r}^{ts})= 
			\{ \gamma'(0) | \text{smooth curve }
				\gamma\colon \mathbb{R} \to\mathrm{SFETT}\, (\mathbf{r}^{tt}, \mathbf{r}^{ts}), 
			\gamma(0) = \mathcal{X}  \}.
		\end{equation*}
		The tangent space for the manifold of fixed SF-ETT rank at 
		\begin{equation*}
			\mathcal{X} = \left\llbracket 
			\mathcal{T}\left(W^{(1)}_L, \dotsc, W^{(d-1)}_L, \overline{W}^{(d)}\right); U^{(1)}, \dotsc, U^{(d_t)}, U, \dotsc, U
			\right\rrbracket
		\end{equation*}
		can be written as
		\begin{align*}
			T_{\mathcal{X}} &\mathrm{SFETT}\, (\mathbf{r}^{tt}, \mathbf{r}^{ts})  = \\
			&\left\{ 
				\xi  
				\bigg| \xi = 
				\left\llbracket
				\dot{\mathcal{T}}_{\mathcal{X}}\left(\dot{W}^{(1)},\dotsc,\dot{W}^{(d)}\right);
					U^{(1)}, \dotsc, U^{(d_t)}, U, \dotsc, U
				 \right\rrbracket + \right.\\ 
			&   \left. \left\llbracket\mathcal{T}\left(W^{(1)}_L, \dotsc, W^{(d-1)}_L, \overline{W}^{(d)}\right);
				{\dot{U}^{(1)}}, \dotsc, U^{(d_t)}, U, \dotsc, U\right\rrbracket + \dots + \right.\\
			& 	\left.\left\llbracket\mathcal{T}\left(W^{(1)}_L, \dotsc, W^{(d-1)}_L, \overline{W}^{(d)}\right);
				U^{(1)}, \dotsc, U^{(d_t)}, U, \dotsc, \dot{U}\right\rrbracket
			\right\},
		\end{align*}
		where 
		\begin{equation*}
			\dot{\mathcal{T}}_{\mathcal{X}}\left(\dot{W}^{(1)},\dotsc,\dot{W}^{(d)}\right) = 
			\sum_{i=1}^{d}\mathcal{T}\left(W^{(1)}_L,\dotsc,{\dot{W}}^{(i)},\dotsc,W^{(d)}_R\right),
		\end{equation*}
		$\dot{W}^{(i)} \in \mathbb{R}^{r^{\mathrm{tt}}_{i - 1}\times r^{\mathrm{t}}_{i}\times r^{\mathrm{tt}}_{i}}$, 
		$i \in \overline{1, d_t}$, $\dot{W}^{(i)} \in 
		\mathbb{R}^{r^{\mathrm{tt}}_{i - 1}\times r^{\mathrm{t}}_{s}\times r^{\mathrm{tt}}_{i}}$,
		$i \in \overline{d_t + 1, d}$
		($r^{\mathrm{tt}}_{0} = r^{\mathrm{tt}}_{d} = 1$), 
		$\dot{U}^{(j)} \in \mathbb{R}^{n_j \times r^{\mathrm{t}}_{j}}, j \in \overline{1,d_t}$, 
		$\dot{U}\in \mathbb{R}^{n_s \times  r^{\mathrm{t}}_{s}}$.
        Let $\mathcal{X}$ also be in the $(d-1)$ orthogonalized form. Then we impose additional
		gauge conditions:
    \begin{align*}\label{eq:sfett_tangent_gauge_cond}
			&\left(U^{(i)}\right)^\top\dot{U}^{(i)}  = O_{r^{\mathrm{t}}_{i}},\quad\forall i \in \overline{1, d_t},  \\
			&\left(U\right)^\top\dot{U}  = O_{r^{\mathrm{t}}_{s}},  \\
			&L \left( W^{(i)}_L \right)^\top L\left( \dot{W}^{(i)} \right) = O_{r^{\mathrm{tt}}_{i}}, \quad
			\forall i \in \overline{1, d - 1}. 
		\end{align*}
		It is also possible to store every tangent vector in SF-ETT form with at most doubled SF-ETT rank:
		\begin{equation*}\label{eq:sfett_tangent_elem_double_rank}
			\xi = \left\llbracket \mathcal{T}\left(\hat{W}^{(1)},\dotsc,\hat{W}^{(d)}\right),
			\begin{bmatrix} U^{(1)} & {\dot{U}^{(1)}} \end{bmatrix} , \dotsc, 
			\begin{bmatrix} U & {\dot{U}} \end{bmatrix}\right\rrbracket,
		\end{equation*}
		where $\mathcal{T}\left(\hat{W}^{(1)},\dotsc,\hat{W}^{(d)}\right) 
		\in \mathbb{R}^{2r_1\times \dotsc\times 2r_{d_t}\times 2r_{s}\times \dotsc\times 2r_s}$ and 
		\begin{align*}
			& \hat{W}^{(1)}\left[:r_1^t, :\right] = \begin{bmatrix} \dot{W}^{(1)} & W_L^{(1)} \end{bmatrix}, \quad
			\hat{W}^{(1)}[r_1^t:, :] = \begin{bmatrix} \overline{W}^{(1)} & 0 \end{bmatrix}; \\ 
		    & \hat{W}^{(i)}\left[:, :r_i^t, :\right] = \begin{bmatrix} W_R^{(i)} & 0 \\ \dot{W}^{(i)} & W_L^{(i)} \end{bmatrix}, \quad
			\hat{W}^{(i)}[:, r_i^t:, :] = \begin{bmatrix} 0 & 0 \\ \overline{W}^{(i)} & 0 \end{bmatrix}, 
			\quad i \in \overline{(1, d_t)};\\ 
		    & \hat{W}^{(j)}\left[:, :r_s^t, :\right] = \begin{bmatrix} W_R^{(j)} & 0 \\ \dot{W}^{(j)} & W_L^{(j)} \end{bmatrix}, \quad
			\hat{W}^{(j)}[:, r_s^t:, :] = \begin{bmatrix} 0 & 0 \\ \overline{W}^{(j)} & 0 \end{bmatrix}, 
			\quad j \in \overline{(d_t + 1, d)};\\ 
			& \hat{W}^{(d)}\left[:, :r_s^t\right] = \begin{bmatrix} W_R^{(d)} \\ \dot{W}^{(d)}  \end{bmatrix}, \quad
			\hat{W}^{(d)}[:, r_s^t:] = \begin{bmatrix} 0 \\ \overline{W}^{(d)}   \end{bmatrix}. 
		\end{align*}
		Where $\overline{W}^{(i)}, i \in \overline{1, d}$ is the $i$-th core in the $i$-th orthogonalization, where  
		$\overline{W}^{(i)}$ is the only core without the orthogonality constraint.
	\subsection{Projection to the tangent space}\label{sub:sfett_tangent_space_proj}
		The orthogonal projection to a tangent space of a smooth manifold is a crucial step in gradient-based 
		optimization algorithms on smooth manifolds. 
		This section proposes formulas for computing the orthogonal 
		projection onto the tangent space of $\mathrm{SFETT}\, (\mathbf{r}^{tt}, \mathbf{r}^{ts})$.

		Every orthogonal projection is associated with a scalar product, defined on tangent space, 
		called Riemannian metric. Further statements 
		are based on Riemannian metric coinciding with a standard scalar product in 
		$\mathbb{R}^{n_1\times \dotsc\times n_{d_t}\times n_{s}\times \dotsc\times n_s}$ on 
		$T_{\mathcal{X}} \mathrm{SFETT}\, (\mathbf{r}^{tt}, \mathbf{r}^{ts})$.
		In this case, the computation of Riemannian gradient can be simplified to 
		the computation of a projection of Euclidean gradient onto the tangent space:
		\begin{equation}\label{eq:riemannian_grad_as_proj}
			\mathrm{grad}f(\mathcal{X}) = P_{T_{\mathcal{X}} \mathrm{SFETT}\, (\mathbf{r}^{tt}, \mathbf{r}^{ts})} \left(\nabla \overline{f}(\mathcal{X})\right), 
		\end{equation}
        where $f\colon \mathrm{SFETT}\, (\mathbf{r}^{tt}, \mathbf{r}^{ts}) \to \mathbb{R}$ is a restriction of 
        some smooth function $\overline{f}\colon \mathbb{R}^{n_1 \times \dots n_{d_t} \times n_s \times \dots \times n_s} \to \mathbb{R}$ to $\mathrm{SFETT}\, (\mathbf{r}^{tt}, \mathbf{r}^{ts})$.
		Let $Z \in \mathbb{R}^{n_1\times \dotsc\times n_{d_t}\times n_{s}\times \dotsc\times n_s}$ 
		and $\mathcal{X} \in \mathrm{SFETT}\, (\mathbf{r}^{tt}, \mathbf{r}^{ts})$:
		\begin{equation*}
			\mathcal{X} = \left\llbracket \mathcal{G}; U^{(1)}, \dotsc, U^{(d_t)}, U, \dotsc, U\right\rrbracket, \quad 
			\mathcal{G} = \mathcal{T}\left(W^{(1)}, \dotsc,W^{(d-1)}, W^{(d)}\right),
		\end{equation*}
		then
		$P_{T_{\mathcal{X}} \mathrm{SFETT}\, (\mathbf{r}^{tt}, \mathbf{r}^{ts})} \left(Z \right) 
		\in T_{\mathcal{X}} \mathrm{SFETT}\, (\mathbf{r}^{tt}, \mathbf{r}^{ts})$  (or $P(Z)$ for short)
		admits the following SF-ETT decomposition:
		\begin{align*}
			P &\left(Z \right) = \\
			&\left\llbracket \dot{\mathcal{T}}_{\mathcal{X}}\left(\dot{W}^{(1)},\dotsc,\dot{W}^{(d)}\right);
			 U^{(1)}, \dotsc, U \right\rrbracket + 
			\left\llbracket \mathcal{G};
			{\dot{U}^{(1)}}, \dotsc, 
			 U\right\rrbracket+ \dots +
			\left\llbracket \mathcal{G}; U^{(1)}, \dotsc, \dot{U} \right\rrbracket,\\
			&\dot{U}^{(i)} 
			= P_{\perp}U^{(i)}
			\left\llbracket Z;{U^{(1)}}^\top,\dotsc,\underbrace{I}_{i},\dotsc,{U}^\top\right\rrbracket_{(i)}
			\mathcal{G}_{(i)}^\dagger,\quad\forall i \in \overline{1,d_t},\\
			&\dot{U} = 
			P_{\perp}U
			\left(\sum_{i=d_t+1}^{d}
			\left\llbracket Z;{U^{(1)}}^\top,\dotsc,\underbrace{I}_{i},\dotsc,{U}^\top\right\rrbracket_{(i)}
			\mathcal{G}_{(i)}^\top\right)
			\left(\sum_{i=d_t+1}^{d}\mathcal{G}_{(i)}\mathcal{G}_{(i)}^\top\right)^{-1},\\
			&\dot{W}^{(i)} 
			= 
			P_{\perp}L\left(W^{(i)}_L\right)
			\left(I_{r^{\mathrm{t}}_{i}} \otimes_K
			\mathcal{G}_{ \le i - 1}^\top\right)
			\hat{\mathcal{G}}^{<i>}
			\mathcal{G}_{ \ge i + 1},\quad\forall i \in \overline{1, d_t},\\
			&\dot{W}^{(i)} 
			= 
			P_{\perp}L\left(W^{(i)}_L\right)\left(I_{r^{\mathrm{t}}_{s}} \otimes_K
			\mathcal{G}_{ \le i - 1}^\top\right)
			\hat{\mathcal{G}}^{<i>}
			\mathcal{G}_{ \ge i + 1},\quad\forall i \in \overline{d_t + 1, d - 1},\\
			&\dot{W}^{(d)}
			= 
			\left(I_{r^{\mathrm{t}}_{s}} \otimes_K
			\mathcal{G}_{ \le d - 1}^\top\right) 
			\hat{\mathcal{G}}^{<d>},
		\end{align*}
		where
		\begin{equation*}
			P_{\perp}A = \left(I - AA^\top \right),\quad \hat{\mathcal{G}} = 
			\left\llbracket Z;{U^{(1)}}^\top,\dotsc,{U^{(d_t)}}^\top,{U}^\top,\dotsc,{U}^\top\right\rrbracket.
		\end{equation*}
		\begin{proof}
			The justification of the formulas above consists of two steps fully described in Appendix~\ref{appendix:tangent_space_proj}: the first stage is to notice that 
			tangent space can be decomposed into the sum of mutually orthogonal linear subspaces, then 
			deriving the projection stands behind deriving several projections onto these linear subspaces.
		\end{proof}	
	\subsection{Retraction}\label{sub:sfett_retraction}
		Retraction  
		\begin{equation}
			R(\cdot, \cdot)\colon \mathcal{M} \times T_{\mathcal{X}}\mathcal{M} \to \mathcal{M}		
		\end{equation}
 		for a smooth manifold
		$\mathcal{M}$ is a smooth mapping which satisfies the following two conditions \citep{absil2008optimization}:
		\begin{align}
			&R(\mathcal{X}, 0) = \mathcal{X}\label{eq:retraction_1},\\
			&\mathrm{D}_\xi R(\mathcal{X}, 0) [\eta] = \eta,\quad \forall \eta \in T_{\mathcal{X}}\mathcal{M}
			\label{eq:retraction_2}.
		\end{align}
		The retraction operation approximates exponential maps that are often difficult to compute from the computational viewpoint.
		The following theorem proposes the SF-ETT-Rounding Algorithm~\ref{alg:sfett_rounding} as a retraction for 
		$\mathrm{SFETT}\, (\mathbf{r}^{tt}, \mathbf{r}^{ts})$.
		\begin{proposition}\label{proposition:sfett_rounding_retraction}
			The mapping
			\begin{align*}
				R(\mathcal{X}, \xi) = P_{ (\mathbf{r}^{tt}, \mathbf{r}^{ts})}^{\mathtt{sf-ett-hosvd}} \left(\mathcal{X} + \xi\right),
			\end{align*}						
			where $P_{ (\mathbf{r}^{tt}, \mathbf{r}^{ts})}^{\mathtt{sf-ett-hosvd}}$ is from Theorem~\ref{theorem:sfett_hosvd} is a retraction for the manifold 
			 $\mathrm{SFETT}\, (\mathbf{r}^{tt}, \mathbf{r}^{ts})$.
		\end{proposition}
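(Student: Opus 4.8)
The plan is to recognize $R$ as a \emph{projection-like retraction} in the sense of \citep{absil2008optimization} and to reduce both defining axioms \eqref{eq:retraction_1}--\eqref{eq:retraction_2} to a single structural observation: the map $P := P_{(\mathbf{r}^{tt},\mathbf{r}^{ts})}^{\mathtt{sf-ett-hosvd}}$ is a map, smooth on a neighborhood of $\mathrm{SFETT}\,(\mathbf{r}^{tt},\mathbf{r}^{ts})$ inside the ambient space $\mathbb{R}^{n_1\times \dotsc\times n_{d_t}\times n_s\times \dotsc\times n_s}$, which sends that neighborhood into the manifold and restricts to the identity on it. Once these three properties --- correct codomain, fixed-point on the manifold, and smoothness --- are established, the two retraction axioms follow automatically by the chain rule.

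First I would verify the fixed-point property $P(\mathcal{X}) = \mathcal{X}$ for every $\mathcal{X} \in \mathrm{SFETT}\,(\mathbf{r}^{tt},\mathbf{r}^{ts})$. This is immediate from the quasi-optimality bound of Theorem~\ref{theorem:sfett_hosvd}: since $\mathcal{X}$ itself has SF-ETT rank exactly $(\mathbf{r}^{tt},\mathbf{r}^{ts})$, it is a feasible competitor in the infimum, the right-hand side vanishes, and $\|\mathcal{X} - P(\mathcal{X})\| \le C(d)\cdot 0 = 0$. That $P$ maps into the manifold is clear by construction, as its output is presented in the SF-ETT format with the prescribed rank. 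The centering axiom \eqref{eq:retraction_1} is then just $R(\mathcal{X},0) = P(\mathcal{X}) = \mathcal{X}$. For the local rigidity axiom \eqref{eq:retraction_2} I would take any $\eta \in T_{\mathcal{X}}\mathrm{SFETT}\,(\mathbf{r}^{tt},\mathbf{r}^{ts})$, realize it as $\gamma'(0)$ for a smooth curve $\gamma$ on the manifold with $\gamma(0)=\mathcal{X}$, apply $P(\gamma(t))=\gamma(t)$ from the fixed-point property, and differentiate at $t=0$ to obtain $\mathrm{D}_\xi R(\mathcal{X},0)[\eta] = \mathrm{D}P(\mathcal{X})[\eta] = \gamma'(0) = \eta$; here $\mathcal{X}+\xi$ remains in the domain of $P$ for small $\xi$ because it stays close to $\mathcal{X}$.

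The hard part will be establishing smoothness of $P$ near the manifold, which is the only nontrivial ingredient. The subtlety is that $P$ is assembled from truncated SVDs --- the factors $U^{(i)}$ are dominant left singular subspaces of the matricizations $\mathcal{X}_{(i)}$, the shared factor $U$ is the dominant subspace of the concatenation $[\mathcal{X}_{(d_t+1)} \,|\, \dotsc \,|\, \mathcal{X}_{(d)}]$, and the core is produced by a TT-SVD truncation --- and individual singular vectors do not depend smoothly on the data. The remedy is the standard spectral-gap argument: at a point $\mathcal{X}$ of exact SF-ETT rank, each of these matrices has its $r$-th singular value strictly positive while the $(r+1)$-st is zero, so a strict gap is present and, by continuity of singular values, persists on an ambient neighborhood. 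On that neighborhood the dominant singular subspaces --- equivalently the orthogonal projectors $U^{(i)}(U^{(i)})^\top$ and $UU^\top$ --- depend smoothly on the data, and because the reconstructed tensor is gauge-invariant (it depends only on these subspaces, not on the particular orthonormal bases chosen), $P(\mathcal{X})$ is a smooth function of $\mathcal{X}$.

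I would finish by noting that the same reasoning applies verbatim to the TT-SVD stage acting on the core $\mathcal{G}$, whose dominant unfolding subspaces likewise enjoy a spectral gap at points of the manifold; factor sharing plays no adverse role, as it merely forces one subspace to be reused across several slots without altering its smooth dependence on $\mathcal{X}$. Being a composition of smooth maps (SF-Tucker subspace extraction, TT-SVD truncation of the core, and contraction), $P$ is therefore smooth in a neighborhood of $\mathrm{SFETT}\,(\mathbf{r}^{tt},\mathbf{r}^{ts})$, which together with the two steps above completes the verification that $R$ is a retraction.
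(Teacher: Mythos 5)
Your proposal is correct, but it takes a genuinely different route from the paper. The paper's proof is a one-liner: it combines the quasi-optimality bound of Theorem~\ref{theorem:sfett_hosvd} with \citep[Proposition~2.3]{kressner2014low}, which encapsulates the projection-like-retraction argument of Absil and Malick --- quasi-optimality gives $\|(\mathcal{X}+\xi) - R(\mathcal{X},\xi)\| \le C(d)\,\mathrm{dist}(\mathcal{X}+\xi,\, \mathrm{SFETT})$, and since for tangent $\xi$ the distance from $\mathcal{X}+\xi$ to the manifold is $O(\|\xi\|^2)$, the truncation agrees with $\mathcal{X}+\xi$ to first order, which yields both axioms \eqref{eq:retraction_1}--\eqref{eq:retraction_2}. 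You instead use quasi-optimality only for the much weaker fixed-point property $P|_{\mathrm{SFETT}} = \mathrm{id}$, and carry the real burden through a spectral-gap smoothness argument for the SVD-based truncation followed by the chain rule applied to $P\circ\gamma = \gamma$. Each approach buys something: the paper's citation-based argument sidesteps entirely the well-definedness and smoothness of $P$ near the manifold, which is exactly where your proof has its only soft spot --- the claim that the composed output is gauge-invariant (in particular, that the TT-SVD stage is equivariant under orthogonal changes of the Tucker bases, so that $P$ depends only on the projectors $U^{(i)}{U^{(i)}}^\top$, $UU^\top$) is asserted rather than proved, and one should also note that near the manifold the truncation has rank exactly, not merely at most, $(\mathbf{r}^{tt},\mathbf{r}^{ts})$, by continuity and openness of the full-rank condition. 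Conversely, your self-contained argument explicitly delivers the smoothness of $R$ that the retraction definition demands and which the paper leaves implicit inside the cited result; filling in the equivariance lemma would make it a complete standalone alternative.
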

		\begin{proof}
			Using quasi-optimality of Algorithm~\ref{alg:sfett_rounding} and \cite[Proposition $2.3.$]{kressner2014low}, we immediately obtain the desired result.
		\end{proof}

\section{Gradient-based optimization on the manifold}\label{sec:gradient_based_optimization}
	This section is devoted to the description of  the key ingredients of the gradient-based Riemannian optimization algorithm on the manifold of the fixed SF-ETT rank. The details of their application in practice will be given further in Section~\ref{sec:num_exp}.
    
    Let a scalar-valued function $f\colon \mathrm{SFETT}\, (\mathbf{r}^{tt}, \mathbf{r}^{ts}) \to \mathbb{R}$ be a restriction of 
	 some smooth function $\overline{f}\colon~\mathbb{R}^{n_1 \times \dots \times n_{d_t}~\times n_s \times \dots \times n_s} \to \mathbb{R}$ to $\mathrm{SFETT}\, (\mathbf{r}^{tt}, \mathbf{r}^{ts})$. We aim to solve the task:
    
	\begin{equation}\label{eq:sfett_riem_task}
		\min\limits_{\mathcal{X} \in \mathrm{SFETT}\, (\mathbf{r}^{tt}, \mathbf{r}^{ts})} f(\mathcal{X}).
	\end{equation}

	A loop of the Riemannian gradient descent algorithm for solving \eqref{eq:sfett_riem_task} can be organized according to the following
    scheme \citep{absil2008optimization}. First, one computes an optimization direction $h_k$ parallel to the Riemannian anti-gradient $(-1)\cdot\mathrm{grad}f(\mathcal{X}_k)$ at the current point $\mathcal{X}_k$. Afterwards the direction is stretched by 
    a step size value $\alpha_k > 0$. 
    This optimization loop finalizes with 
    a step from $\mathcal{X}_k$ towards $\alpha_kh_k$ along $T_{\mathcal{X}_k} \mathrm{SFETT}\, (\mathbf{r}^{tt}, \mathbf{r}^{ts})$. The result of this 
    operation has to belong to $\mathrm{SFETT}\, (\mathbf{r}^{tt}, \mathbf{r}^{ts})$. Therefore one may use a retraction 
    operation to satisfy this condition:
    \begin{equation*}
		\mathcal{X}_{k + 1} = R(\mathcal{X}_{k},  \alpha_k h_k).
	\end{equation*}
    A schematic visualization of the presented optimization loop
    is shown in Figure~\ref{fig:notation_riem_opt}.
	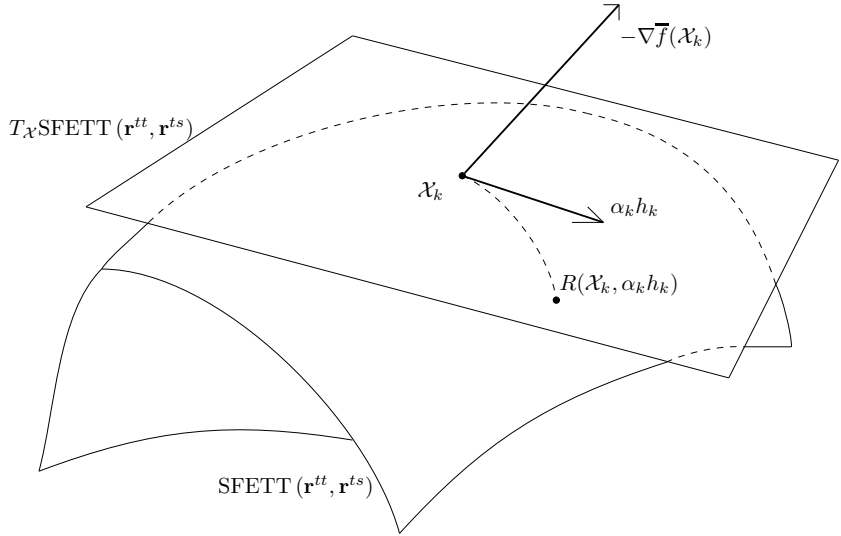
\begin{figure}[h!tp]
		\centering
		\resizebox{0.85\textwidth}{!}{
            \begin{tikzpicture}
            	\begin{pgfonlayer}{nodelayer}
            		\node [style=none] (0) at (-5.75, -2) {};
            		\node [style=none] (1) at (-4.75, 1.25) {};
            		\node [style=none] (2) at (0, -3) {};
            		\node [style=none] (3) at (3, 3.75) {};
            		\node [style=none] (4) at (-0.75, -1.5) {};
            		\node [style=none] (5) at (6.25, 0) {};
            		\node [style=none] (9) at (1, 2.75) {};
            		\node [style=none] (10) at (2.5, 0.75) {};
            		\node [style=none] (11) at (3.25, 2) {};
            		\node [style=none] (12) at (3, 2.25) {};
            		\node [style=none] (13) at (2.75, 2) {};
            		\node [style=none] (14) at (-5, 2.25) {};
            		\node [style=none] (15) at (5.25, -0.5) {};
            		\node [style=none] (16) at (-0.75, 5) {};
            		\node [style=none] (17) at (7, 3) {};
            		\node [style=none] (18) at (-4.75, 3.5) {$T_{\mathcal{X}} \mathrm{SFETT}\, (\mathbf{r}^{tt}, \mathbf{r}^{ts})$};
            		\node [style=none] (21) at (-1.65, -2.25) {$\mathrm{SFETT}\, (\mathbf{r}^{tt}, \mathbf{r}^{ts})$};
            		\node [style=none] (22) at (0.5, 2.5) {$\mathcal{X}_k$};
            		\node [style=none] (22) at (3.5, 1) {$R(\mathcal{X}_k, \alpha_k h_k)$};
            		\node [fill=black, draw=black, shape=circle, inner sep=0pt, minimum size=0.1cm] (23) at (1, 2.75) {};
            		\node [fill=black, draw=black, shape=circle, inner sep=0pt, minimum size=0.1cm] (24) at (2.5, 0.75) {};
            		\node [style=none] (25) at (-4, 2) {};
            		\node [style=none] (26) at (6, 1) {};
            		\node [style=none] (27) at (5.5, 0) {};
            		\node [style=none] (28) at (4.25, -0.25) {};
            		\node [style=none] (29) at (3.5, 5.5) {};
            		\node [style=none] (30) at (3.25, 5.5) {};
            		\node [style=none] (31) at (3.5, 5.25) {};
            		\node [style=none] (32) at (4.25, 5) {$-\nabla \overline{f}(\mathcal{X}_k)$};
            		\node [style=none] (33) at (3.75, 2.25) {$\alpha_k h_k$};
            		\node [style=none] (34) at (-2.75, 2) {};
            	\end{pgfonlayer}
            	\begin{pgfonlayer}{edgelayer}
            		\draw [in=-135, out=75, looseness=0.75] (0.center) to (1.center);
            		\draw [in=105, out=0, looseness=0.75] (1.center) to (2.center);
            		\draw [bend left=15] (0.center) to (4.center);
            		\draw [line width=0.03cm] (9.center) to (11.center);
            		\draw [bend left, looseness=0.75, dashed] (9.center) to (10.center);
            		\draw (11.center) to (12.center);
            		\draw (13.center) to (11.center);
            		\draw (14.center) to (15.center);
            		\draw (14.center) to (16.center);
            		\draw (16.center) to (17.center);
            		\draw (15.center) to (17.center);
            		\draw [in=-135, out=60, looseness=0.50] (1.center) to (25.center);
            		\draw [bend right=15, looseness=0.50] (5.center) to (26.center);
            		\draw [bend left=15] (2.center) to (28.center);
            		\draw [in=0, out=180] (5.center) to (27.center);
            		\draw [bend left=15, looseness=0.75, dashed] (28.center) to (27.center);
            		\draw [bend right, dashed] (26.center) to (3.center);
            		\draw [in=165, out=45, looseness=0.75, dashed] (25.center) to (3.center);
            		\draw [line width=0.03cm](23) to (29.center);
            		\draw (31.center) to (29.center);
            		\draw (30.center) to (29.center);
            	\end{pgfonlayer}
            \end{tikzpicture}
        }
		\caption{Visualization of the Riemannian gradient descent}
		\label{fig:notation_riem_opt}
	\end{figure}
	\subsection{Riemannian gradient computation}%
	\label{ssub:riemannian_gradient_computation}
		Let  
		\begin{equation*}
			\mathcal{X}_k = \left\llbracket \mathcal{T}\left(W^{(1)}_L, \dotsc, W^{(d-1)}_L, \overline{W}^{(d)}\right);
			U^{(1)}, \dotsc, U^{(d_t)}, U, \dotsc, U\right\rrbracket
		\end{equation*}
		be in $(d-1)$-orthogonal SF-ETT form. According to 
        \eqref{eq:riemannian_grad_as_proj},
		the Riemannian gradient computation can be reduced 
		to calculation of a projection of the Euclidean gradient $\nabla \overline{f}(\mathcal{X}_k)$ 
		to the tangent space:
		\begin{equation}\label{eq:riem_grad_as_proj}
			\mathrm{grad} f \left(\mathcal{X}_k \right) = 
			P_{T_{\mathcal{X}_k} \mathrm{SFETT}\, (\mathbf{r}^{tt}, \mathbf{r}^{ts})} 
			\nabla \overline{f}\left(\mathcal{X}_k \right).
		\end{equation}
		Since $\mathrm{grad} f \left(\mathcal{X}_k \right)\in T_{\mathcal{X}_k} \mathrm{SFETT}\, (\mathbf{r}^{tt}, \mathbf{r}^{ts})$, one can describe
		it  with tensors
		\begin{equation*}
			\dot{W}^{(i)}, i \in \overline{1, d}, \quad
			\dot{U}^{(j)}, j \in \overline{1,d_t}, \quad \dot{U},
		\end{equation*}
		that form a SF-ETT represented tensor with ranks which do not exceed $ (2\mathbf{r}^{tt}, 2\mathbf{r}^{ts})$  (see Section~\ref{sub:sfett_tangent_space}).

		Mentioning the fact that the Euclidean gradient may have a significantly bigger 
		SF-ETT rank than $(2\mathbf{r}^{tt}, 2\mathbf{r}^{ts})$ as described in \citep{novikov2022automatic} one 
		can come up with the idea of computing the whole projection in
		\eqref{eq:riem_grad_as_proj} at once. That can be accomplished via the automatic differentiation (autodiff) approach. This idea was firstly 
		described in \citep{novikov2022automatic}. Let us define a construction operator, which composes 
		the tangent vector as an element of $\mathbb{R}^{n_1 \times \dots n_{d_t} \times n_s \times \dots \times n_s}$ from its cores 
		and factors:
		\begin{align}
			&\mathcal{C}_{\mathcal{X}}\left(S^{(1)}, \dotsc, S^{(d)}; B^{(1)}, \dotsc, B^{(d_t)}; A\right) = \nonumber\\
			&\left\llbracket \dot{\mathcal{T}}_{\mathcal{X}}\left(S^{(1)},\dotsc,S^{(d)}\right);
			 U^{(1)}, \dotsc, U \right\rrbracket + 
			\left\llbracket \mathcal{G};
			{B^{(1)}}, \dotsc, 
			 U\right\rrbracket+ \dots +
			\left\llbracket \mathcal{G}; U^{(1)}, \dotsc, A \right\rrbracket.\label{eq:construction_operator}
		\end{align}
		One can easily verify that
		\begin{equation*}
			\mathcal{C}_{\mathcal{X}}\left(\underbrace{ 0, \dotsc, W^{(d)}; 0, \dotsc, 0; 0 }_{\theta}\right) = \mathcal{X},
		\end{equation*}
		and defining a new real-valued function $g$:
		\begin{equation*}
			g = \overline{f} \circ \mathcal{C}_{\mathcal{X}},
		\end{equation*}
		 one can compute partial derivatives of $g$ with respect to parameters 
		 $S^{(1)}, \dotsc, S^{(d)},$ $B^{(1)}, \dotsc, B^{(d_t)}, A$ via autodiff:
		\begin{align*}
			& \frac{\partial {\left(\overline{f} \circ \mathcal{C}_{\mathcal{X}} \right)}}
			{\partial {B^{(i)}}} \left( \theta \right) = 
			\left\llbracket \nabla \overline{f}\left(\mathcal{X}\right);
				{U^{(1)}}^\top,\dotsc,\underbrace{I}_{i},\dotsc,{U}^\top\right\rrbracket_{(i)}
			\mathcal{G}_{(i)}^\top,\quad\forall i \in \overline{1,d_t},\\
			& \frac{\partial {\left(\overline{f} \circ \mathcal{C}_{\mathcal{X}} \right)}}{\partial {A}} \left( \theta \right) = 
			\sum_{i=d_t+1}^{d}
			\left\llbracket \nabla \overline{f}\left(\mathcal{X}\right);
				{U^{(1)}}^\top,\dotsc,\underbrace{I}_{i},\dotsc,{U}^\top\right\rrbracket_{(i)}
			\mathcal{G}_{(i)}^\top, \\
			& \frac{\partial {\left(\overline{f} \circ \mathcal{C}_{\mathcal{X}} \right)}}
			{\partial {S^{(i)}}} \left( \theta \right) = 
			\left(I_{r^{\mathrm{t}}_{i}} \otimes_K
			\mathcal{G}_{ \le i - 1}^\top\right)
			\hat{\mathcal{G}}^{<i>}
			\mathcal{G}_{ \ge i + 1},\quad\forall i \in \overline{1, d_t},\\
			& \frac{\partial {\left(\overline{f} \circ \mathcal{C}_{\mathcal{X}} \right)}}
			{\partial {S^{(i)}}} \left( \theta \right) = 
			\left(I_{r^{\mathrm{t}}_{s}} \otimes_K
			\mathcal{G}_{ \le i - 1}^\top\right)
			\hat{\mathcal{G}}^{<i>}
			\mathcal{G}_{ \ge i + 1},\quad\forall i \in \overline{d_{t + 1}, d_s},\\
			& \frac{\partial {\left(\overline{f} \circ \mathcal{C}_{\mathcal{X}} \right)}}
			{\partial {S^{(d)}}} \left( \theta \right) = 
			\left(I_{r^{\mathrm{t}}_{s}} \otimes_K
			\mathcal{G}_{ \le i - 1}^\top\right)
			\hat{\mathcal{G}}^{<d>},
		\end{align*}
		where
		\begin{equation*}
			\hat{\mathcal{G}} =
			\left\llbracket \nabla \overline{f}(\mathcal{X});{U^{(1)}}^\top,\dotsc,{U^{(d_t)}}^\top,{U}^\top,\dotsc,{U}^\top\right\rrbracket,\quad
			\mathcal{G}= \mathcal{T}\left( W^{(1)}_L, \dotsc, W^{(d-1)}_L, \overline{W}^{(d)}\right).
		\end{equation*}
		\begin{proof}
			The justification of these formulas is presented in Appendix~\ref{appendix:riemannian_autodiff}.
		\end{proof}
		
		This leads to closed-form formulas for computing Riemannian gradient parameters on the tangent space:
		\begin{align*}
			&\dot{U}^{(i)} 
			= P_{\perp}U^{(i)}
			\frac{\partial {\left(\overline{f} \circ \mathcal{C}_{\mathcal{X}} \right)}}{\partial {B^{(i)}}} \left( \theta \right)
			\left(\mathcal{G}_{(i)}\mathcal{G}_{(i)}^\top\right)^{-1},\quad\forall i \in \overline{1,d_t},\\
			&\dot{U} 
			= P_{\perp}U
			\frac{\partial {\left(\overline{f} \circ \mathcal{C}_{\mathcal{X}} \right)}}{\partial {A}} \left( \theta \right)
			\left(\sum_{i=d_t + 1}^{d}\mathcal{G}_{(i)}\mathcal{G}_{(i)}^\top\right)^{-1},\\
			&\dot{W}^{(i)} 
			= 
			P_{\perp}L\left(W^{(i)}_L\right)
			\frac{\partial {\left(\overline{f} \circ \mathcal{C}_{\mathcal{X}} \right)}}{\partial {S^{(i)}}} \left( \theta \right)
			,\quad\forall i \in \overline{1, d - 1},\\
			&\dot{W}^{(d)}
			= 
			\frac{\partial {\left(\overline{f} \circ \mathcal{C}_{\mathcal{X}} \right)}}{\partial {S^{(d)}}} \left( \theta \right),\\
			&P_{\perp}A= \left(I - AA^\top \right).
		\end{align*}

    \begin{remark}
        In practice, the implementation of the construction operator from \eqref{eq:construction_operator} 
        does not perform explicit arithmetic operations. Instead, since the image of $C_{\mathcal{X}}$ belongs to the 
        tangent space $T_{\mathcal{X}} \mathrm{SFETT}\, (\mathbf{r}^{tt}, \mathbf{r}^{ts})$, 
        the tensor $ \mathcal{X} = C_{\mathcal{X}}(\theta)$ is stored in $(2\mathbf{r}^{tt}, 2\mathbf{r}^{ts})$ SF-ETT format as detailed in Section~\ref{sub:sfett_tangent_space}. Henceforth, $C_{\mathcal{X}}(\theta)$ is passed into the $\overline{f}$ as standard tensor in SF-ETT format.
    \end{remark}
    \subsection{Retraction}%
	\label{sub:retraction}
		As described in Section~\ref{sec:gradient_based_optimization}, once the search direction $h_k$ is given, the final step of a Riemannian optimization algorithm is 
		a retraction step:
		\begin{equation*}
			\mathcal{X}_{k + 1} = R(\mathcal{X}_{k},  \alpha_k h_k).
		\end{equation*}
		Proposition~\ref{proposition:sfett_rounding_retraction}
		states that Algorithm~\ref{alg:sfett_rounding} is a retraction for
        $\mathrm{SFETT}\,(\mathbf{r}^{tt}, \mathbf{r}^{ts})$:
		\begin{equation*}
			R(\mathcal{X}, \xi) = P_{(\mathbf{r}^{tt}, \mathbf{r}^{ts})}^{\mathtt{sf-ett-hosvd}} \left(\mathcal{X}  + \xi\right).
		\end{equation*}
		Since both tangent foot $\mathcal{X}_{k}$ and minimization direction $\alpha_k h_k$ belong to the same tangent space, its sum also belongs that linear space, so $(\mathcal{X}_{k} + \alpha_k h_k)$ is a SF-ETT decomposed tensor with at most $(2\mathbf{r}^{tt}, 2\mathbf{r}^{ts})$ SF-ETT rank
        that allows for straightforward application of Algorithm~\ref{alg:sfett_rounding} without any explicit calculations
        of dense tensors out of SF-ETT factors.
        
    \subsection{Momentum and vector transport}%
	\label{ssub:momentum_and_vector_transport}

    One can enhance the Riemannian gradient descent algorithm by additionally incorporating momentum vectors into the minimization direction. Specifically, the optimization direction $h_k$ can be seen as
    a linear combination of the current Riemannian gradient
	$\mathrm{grad}f(\mathcal{X}_k)$ and the 
	accumulation of Riemannian gradients from previous iterations $M_{k}$. Generally every new optimization point has 
    a new tangent space built around it, so each momentum vector $M_{k - 1} \in \mathrm{T}_{\mathcal{X}_{k-1}}\mathrm{SFETT} (\mathbf{r}^{tt}, \mathbf{r}^{ts})$ needs to be
	transported to the tangent space at the current point:
    \begin{equation*}
        M_k = \tau_k(M_{k - 1}) \in \mathrm{T}_{\mathcal{X}_{k}}\mathrm{SFETT} (\mathbf{r}^{tt}, \mathbf{r}^{ts}),
    \end{equation*}
	where $\tau_k \colon T_{\mathcal{X}_{k - 1}}
	\mathrm{SFETT}\, (\mathbf{r}^{tt}, \mathbf{r}^{ts}) \to 
	T_{\mathcal{X}_{k}} \mathrm{SFETT}\, (\mathbf{r}^{tt}, \mathbf{r}^{ts})$ is called a 
	\textit{vector transport function}~\citep{absil2008optimization}. 
	In particular, we use a projection onto the current 
	tangent space as a vector transport function:
	\begin{equation*}
		\tau_k \colon \mathrm{grad}f \left(\mathcal{X}_{ k - 1 } \right)  \mapsto
		P_{T_{\mathcal{X}_k}\mathrm{SFETT}\, (\mathbf{r}^{tt}, \mathbf{r}^{ts})} 
		\mathrm{grad}f \left(\mathcal{X}_{ k - 1 } \right).
	\end{equation*}
	The computation of such operation can be 
	reduced to one autodiff call as 
    proposed by differentiating
	\begin{equation*}
		\hat{f}(\mathcal{X}) = \langle \mathcal{X}, \xi \rangle, \quad \xi\in \mathbb{R}^{n_1\times \dotsc\times n_{d_t}\times n_{s}\times \dotsc\times n_s},
	\end{equation*}
	since
	\begin{equation*}
		\mathrm{grad}\hat{f}(\mathcal{X}) =
		P_{T_{\mathcal{X}} \mathrm{SFETT}\, (\mathbf{r}^{tt}, \mathbf{r}^{ts})} 
		 \nabla_{\mathcal{X}}\langle \mathcal{X}, \xi \rangle =
		P_{T_{\mathcal{X}} \mathrm{SFETT}\, (\mathbf{r}^{tt}, \mathbf{r}^{ts})} \xi.
	\end{equation*}
	Therefore, to compute a projection of any element $\xi$
	to a tangent space, one needs to compute a Riemannian gradient of the  
	function $\hat{f}$.

\section{Numerical experiments\label{sec:num_exp}}
    The developed Riemannian framework is highly versatile, enabling its use across a wide range of functionals and practical problems where tensor decompositions are applicable. In this section, we apply it to two examples: the compression of grid functions and the solution of multidimensional eigenvalue problems, and study the effect of using multiple shared factors.

    \subsection{Grid functions}%
    \label{sub:grid_functions}
    This section presents an evaluation of the performance of Algorithm~\ref{alg:sfett_rounding} in the context of approximating grid-based functions. The experimental procedure is structured as follows. Initially, a target function is approximated by a tensor $A$ 
    using the TT-cross approximation algorithm applied on a uniform grid
    $G = \{x_k | x_k = k /N, N \in \mathbb{R}\}$ \citep{oseledets2010tt} with high precision
    (e.g. $10^{-12}$ for \texttt{float32} precision).
    Subsequently, a fixed number of shared Tucker factors are initialized alongside regular Tucker factors, both set as identity matrices, and incorporated into the TT-cross approximated tensor. This yields an exact SF-ETT representation $\hat{A}$ of the tensor $A$, although the associated SF-ETT ranks may not be optimal.
    To obtain a quasi-optimal SF-ETT approximation $\mathcal{X}_0$,  Algorithm~\ref{alg:sfett_rounding} is applied with the prescribed SF-ETT ranks
    $(\mathbf{r}^{tt},\mathbf{r}^{ts})$, i.e  
    \begin{equation*}
        \mathcal{X}_0 = P^{\mathtt{sf-ett-hosvd}}_{(\mathbf{r}^{tt},\mathbf{r}^{ts})}\,(\hat{A}).
    \end{equation*}
    For the optimal approximation, $\mathcal{X}_0$ serves as the initial guess for a Riemannian steepest gradient descent (RStGD) method on the manifold
    $\mathrm{SFETT}\, (\mathbf{r}^{tt},\mathbf{r}^{ts})$ (see Section~\ref{sec:gradient_based_optimization})  aimed at minimizing the L$2$ error:
    
	\begin{equation*}
		\min\limits_{\mathcal{X} \in \mathrm{SFETT}\, (\mathbf{r}^{tt},\mathbf{r}^{ts})} \frac{1}{2}\| A - \mathcal{X} \|_F^2.
	\end{equation*}
	The Riemannian gradient of this objective is given by
	\begin{equation*}
		\mathrm{grad} f(\mathcal{X}) =  P_{T_{\mathcal{X}} \mathrm{SFETT}\, (\mathbf{r}^{tt}, \mathbf{r}^{ts})}
		\left( A - \mathcal{X} \right).
	\end{equation*}
	The steepest optimization step $\alpha_k^\star$ in 
	$\mathcal{X}_{k + 1} = \mathcal{X}_k - \alpha_k \, \mathrm{grad} f(\mathcal{X}_k)$ is 
	\begin{equation*}
		\alpha_k^\star = - \frac{\langle\mathrm(A - \mathcal{X}_k), \mathrm{grad} f(\mathcal{X}_k)  \rangle}
		{\|\mathrm{grad} f(\mathcal{X}_k)\|^2_F}.
	\end{equation*}
    The RStGD procedure is stopped unless the next optimization point gives a smaller objective function value. 
    
    We apply the proposed approach to approximate  $12$-dimensional function defined as:
    \begin{equation*}
        f(x) = 1 / (1 + c^\top x), \quad c_i = i + 1, \quad i \in \overline{1,12}, \quad x \in \mathbb{R}^{12},
    \end{equation*}
	on $12$ independent uniform grids $G_i$ each consisting of $N_i = 512$ points:
    \begin{equation*}
	   G_i = \{x_k | x_k  = k / 512,   k \in \overline{1, 512}\}, \quad i \in \overline{1, 12}.
    \end{equation*}
    Hence, the tensor $\mathcal{X}_0$ is a $12$-dimension tensor
    with mode sizes $n_i = 512$ for $i \in \overline{1, 12}$. For varying numbers of shared factors, we construct $\mathcal{X}_0$ with different rank configurations and record the number of parameters in each tensor. Subsequently, we minimize the L$2$
    approximation error and depict the relationship between the number of parameters and the 
    error in  Figure~\ref{fig:hilbert_tens}. The steps-like morphology of the curves is a result of the iterative procedure, wherein the Tucker rank and the Tensor Train (TT) rank are increased alternately by one. Table~\ref{table:min_max_rel_diff_l2_err_hilbert}
    demonstrates relative differences between relative L$2$ error without using RStGD and 
    with it.
    \begin{figure}[h!tp]
		\centering
		\includegraphics[width=0.9\linewidth]{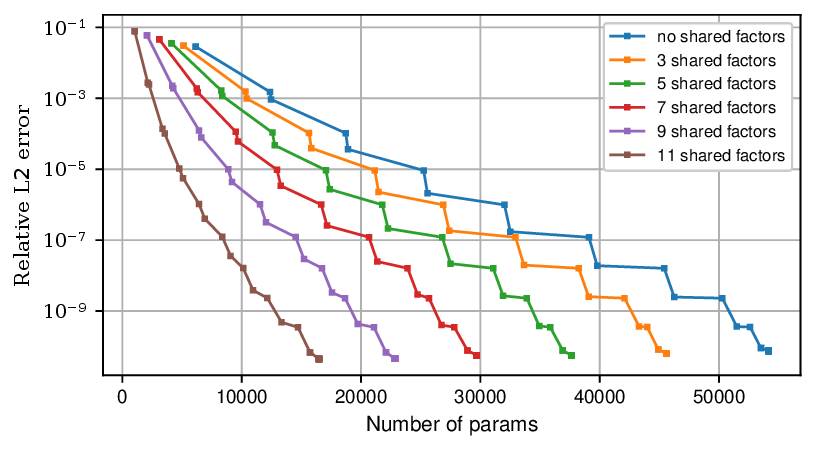}
		\caption{The SF-ETT with RStGD approximation L$2$ error vs. number of parameters of a $12$-tensor of $f(x) = 1/(c_1x_1 + \dotsm + c_{12}x_{12}), c_i = i + 1$ values 
			on a uniform grid of $512$ points for every argument with different amount of shared cores}
		\label{fig:hilbert_tens}
	\end{figure}
    \begin{table}[h!tp]
        \centering
        \caption{Minimum and maximum relative difference in L$2$ errors among different number of parameters for each number of shared factors between SF-ETT approximation of a $12$-tensor of $f(x) = 1/(c_1x_1 + \dotsm + c_{12}x_{12})$, $c_i = i + 1$ values 
        on a uniform grid of $512$ points for every argument with and without RStGD}
        \scriptsize
        \setlength{\tabcolsep}{4pt}
        \begin{tabular}{lcc}
            Shared Factors & Min Relative Difference & Max Relative Difference \\
            \hline
            no sharing & $9.48\times 10^{-7}$ & $9.84\times 10^{-4}$ \\
            3 & $3.97\times 10^{-6}$ & $3.59\times 10^{-3}$ \\
            5 & $9.47\times 10^{-6}$ & $3.66\times 10^{-2}$ \\
            7 & $1.85\times 10^{-5}$ & $2.15\times 10^{-3}$ \\
            9 & $1.64\times 10^{-6}$ & $1.23\times 10^{-2}$ \\
            11 & $4.81\times 10^{-5}$ & $5.25\times 10^{-3}$ \\
            \hline
        \end{tabular}
        \label{table:min_max_rel_diff_l2_err_hilbert}
    \end{table}

	The second function considered for approximation is  $f(x) = \exp(-\alpha x^2), x \in [0, 1]$ sampled on a uniform grid with step size  $1 / 32^{10}$. Accordingly, 
	the tensor $A$ is constructed in the QTT format \citep{approxmattt,khoromskij2011d} with mode sizes
	$n_i = 32, i \in \overline{1, 10}$. 
	The performance of the algorithm for different numbers of shared Tucker factors is summarized in
	Figure~\ref{fig:expontent}. This steps-like morphology shares its origin with that of Figure~\ref{fig:hilbert_tens}, stemming from the same iterative rank-incrementing process. Table~\ref{table:min_max_rel_diff_l2_err_exp}
    demonstrates relative differences between relative L$2$ error without using RStGD and 
    with it.
   	\begin{figure}[h!tp]
		\centering
		\includegraphics[width=0.9\linewidth]{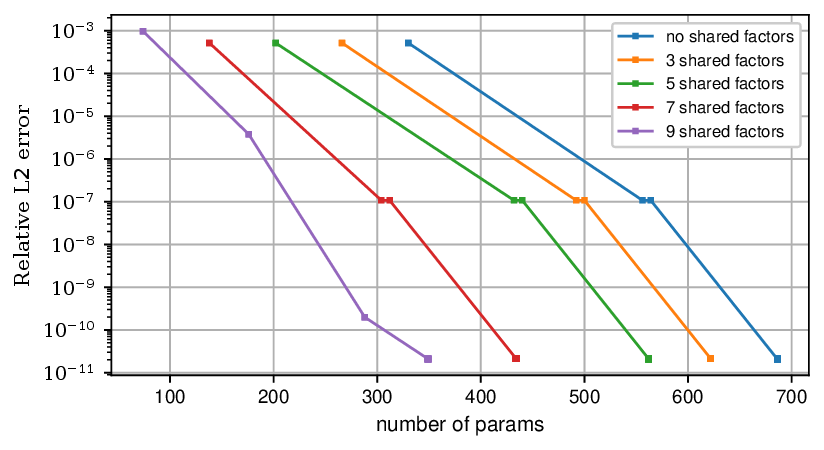}
		\caption{The SF-ETT with RStGD approximation L$2$ error vs. number of parameters of a $10$-tensor of $f(x) = \exp(-\alpha x^2), \alpha = 0.1$ values on a uniform grid of $32^{10}$ nodes 
			 with different amount of shared cores}
		\label{fig:expontent}
	\end{figure}
    \begin{table}[h!tp]
        \centering
        \caption{Minimum and maximum relative difference in L$2$ errors among different number of parameters for each number of shared factors between SF-ETT approximation of $f(x) = \exp(-\alpha x^2), \alpha = 0.1$ values on a uniform grid of $32^{10}$ nodes with and without RStGD}
        \scriptsize
        \setlength{\tabcolsep}{4pt}
        \begin{tabular}{lcc}
            Shared Factors & Min Relative Difference & Max Relative Difference \\
            \hline
            no sharing & $4.28\times 10^{-13}$ & $3.73\times 10^{-6}$ \\
            3 & $4.28\times 10^{-13}$ & $3.71\times 10^{-6}$ \\
            5 & $4.29\times 10^{-13}$ & $3.83\times 10^{-6}$ \\
            7 & $4.27\times 10^{-13}$ & $1.95\times 10^{-6}$ \\
            9 & $2.25\times 10^{-11}$ & $9.23\times 10^{-6}$ \\
            \hline
        \end{tabular}
        \label{table:min_max_rel_diff_l2_err_exp}
    \end{table}

	These experiments demonstrate that despite the quasi-optimal nature of Algorithm~\ref{alg:sfett_rounding}, the returned approximation closely approaches local optima in terms of the Frobenius norm. Furthermore, the subsequent steepest gradient descent step yields only marginal improvements in relative error in both tested configurations.
	
	\subsection{Eigenvalue problem}%
	\label{sub:eigenvalue_problem}

    This section presents numerical solutions for the eigenvalue problem of symmetric positive definite matrices, constrained to the manifold of fixed SF-ETT rank. We consider two test cases: a discretized Laplace operator with and without the Henon-Heiles potential matrix \citep{dolgov2014computation}.   Our approach utilizes a Riemannian optimization framework. Similar and other Riemannian 
	approaches for the eigenproblem can be found in \citep{baker2008riemannian,rakhuba2019low,rakhuba2018jacobi,krumnow2021computing}.

    Computing the extreme eigenvectors and eigenvalues of a symmetric positive definite matrix $H$ is equivalent to minimizing the Rayleigh quotient:
    \begin{equation*}
        \min_{\mathcal{X} \in \mathbb{R}^{n_1 \times  \dotsm \times n_d}} R(\mathcal{X} ), \quad R(\mathcal{X} ) = \frac{\langle  \mathrm{vec}\,(\mathcal{X}) , H \mathrm{vec}\,(\mathcal{X}) \rangle}{\langle \mathrm{vec}\,(\mathcal{X}) , \mathrm{vec}\,(\mathcal{X})  \rangle}, \quad H^\top = H.
    \end{equation*}
    We employ the Locally Optimal Block Preconditioned Conjugate Gradient (LOBPCG) algorithm \citep{knyazev2001toward} for a single vector.
    In the SF-ETT case we replace the whole search space $\mathbb{R}^{n_1 \times  \dotsm \times n_d}$ with the SF-ETT manifold of fixed rank.
    
    The LOBPCG algorithm is a gradient-based method for solving eigenvalue problems. Unlike simple gradient descent, it leverages not only the gradient at the current iterate but also information from gradients computed in previous iterations. At each step, the algorithm constructs a search subspace spanned by the current eigenvector approximation, the current gradient, and a set of accumulated search directions. A locally optimal solution within this subspace is then found using the Rayleigh-Ritz procedure $\mathtt{RayleighRitz}\left(H,S\right)$. In particular,  let $S$ be a basis for a search space. Then we need to solve the following generalized eigenvalue problem:
    \[
     (S^\top H S)Z = (S^\top S)Z \theta.
    \]
    Although LOBPCG is capable of computing multiple eigenpairs simultaneously (block method), in the present experiments it is configured to calculate only the eigenpair corresponding to the smallest eigenvalue.
    

    
	We use a Riemannian version of the Locally Optimal Conjugate Gradient (LOCG) Algorithm~\ref{alg:riemannian_sfett_lobpcg} that is constrained to the manifold of fixed SF-ETT rank. Preconditioning and block versions could also be included, but we restrict ourselves to the basic case to isolate the effect of sharing. Algorithm~\ref{alg:riemannian_sfett_lobpcg} approximates the 
    unit eigenvector, which corresponds to the smallest eigenvalue. In practice, this approximation is stored as $(d-1)$-orthogonal SF-ETT tensor, where $d$ is the amount of TT cores and Tucker factors.
	\begin{algorithm}[tp]
		\caption{Riemannian SF-ETT LOCG}
		\label{alg:riemannian_sfett_lobpcg}
		\textbf{Require:}
		\begin{enumerate}
			\item[]$H$ --- positive definite TT matrix,
			\item[]$\mathcal{X} = \left\llbracket \mathcal{T}\left( W^{(1)}, \dotsc, W^{(d)}\right); 
				U^{(1)}, \dotsc, U^{(d_t)}, U, \dotsc, U\right\rrbracket$ --- SF-ETT representation,
			\item[]$\left(\mathbf{r}^{\mathtt{tt}}, \mathbf{r}^{\mathtt{ts}}\right)$ --- the SF-ETT rank of $\mathcal{X}$.
		\end{enumerate}
		\textbf{Ensure:}
		\begin{enumerate}
			\item[]  $(\theta, \hat{\mathcal{X}})$ --- eigen pair for $H$ with minimal eigenvalue.
		\end{enumerate}
		\textbf{Function:}
		\begin{algorithmic}[1] 
			\State $[c_{0}, \theta_{0}] := \mathtt{RayleighRitz}\left(H,\mathcal{X}\right)$.
			\State $\mathcal{X}_{0} := c_{0}\mathcal{X}$.
			\State $R_{0} := P_{{T_{ \mathcal{X}_{0}}}{\mathrm{SFETT}\, (\mathbf{r}^{\mathrm{tt}},\mathbf{r}^{\mathrm{ts}})}}
			H \mathcal{X}_{0} - 
			\theta_{0}[1,1]\mathcal{X}_{0} $.
			\State $P_{0} := []$.
			\For {$k := 0, 1, \dots, \mathtt{max\_iters}$}
    			\State $S_{k} := [\mathcal{X}_{k}, R_{k}, P_{k}]$.
    			\State $[C_{k + 1}, \theta_{k + 1}] := \mathtt{RayleighRitz}\left(H, S_{k}\right)$.
    			\State $\mathcal{X}_{k + 1} := S_{k}C_{k+1}[:, 1]$.
    			\State $\mathcal{X}_{k + 1} := \mathtt{SF}\text{-}\mathtt{ETTRound}\left(\mathcal{X}_{k + 1},
    			\left(\mathbf{r}^{\mathrm{tt}},\mathbf{r}^{\mathrm{ts}}\right)\right)$.
    			\State $\mathcal{X}_{k + 1} := \mathcal{X}_{k+1} / \mathtt{norm}\left(\mathcal{X}_{k+1}, '\mathtt{F}'\right).$
    			\State $R_{k + 1} :=P_{{T_{\mathcal{X}_{k + 1}}}{\mathrm{SFETT}\, \left(\mathbf{r}^{\mathrm{tt}},\mathbf{r}^{\mathrm{t}}\right)}} 
    			L \mathcal{X}_{k + 1}  -  \theta_{k + 1}[1,1]\mathcal{X}_{k + 1}$.
    			\State $P_{k + 1} := S_{k + 1}[:, 2:]C^{ (k+1)}[2:, 1]$.
    			\State $P_{k + 1} := P_{{T_{\mathcal{X}_{k + 1}}}{\mathrm{SFETT}\, \left(\mathbf{r}^{\mathrm{tt}},\mathbf{r}^{\mathrm{ts}}\right)}} 
    			 P_{k + 1}$.
			\EndFor
			\State \Return $\left(\theta_{k + 1}[1,1], \mathcal{X}_{ k + 1 }\right)$.
		\end{algorithmic}
	\end{algorithm}

	 \subsubsection{Laplace operator}%
	 \label{ssub:laplace_operator}
	 
 	 The eigenproblem for the discretized Laplace operator on a uniform grid, which arises when searching for the states of an isolated particle in a  
	 $d$-dimensional box, can be written as follows:
	 \begin{equation*}
	  - L X = \Theta X, \quad L = D \otimes_K I_n \otimes_K \dotsc \otimes_K I_n + \dotsc  + I_n \otimes_K \dotsc \otimes_K I_n \otimes_K D,
	 \end{equation*}
	 where $I_n \in \mathbb{R}^{n \times n}$ is an identity matrix and $D = \mathrm{tridiag}\, \left( 1, -2, 1 \right) \in  \mathbb{R}^{n \times n}$. The tensor $L$ has a maximum TT rank bounded by $2$ \citep[Proposition~$4.1$]{dmrgqtt}.

	The analytical solution to this problem is known in advance, so the task primarily serves as a benchmark for eigensolvers. The eigenvalues of the matrix $(-D)$ are formed in pairs ${\theta_l, X_l}$:
	\begin{equation*}
		\theta_l = 4\sin^2 \left( \frac{\pi (l + 1)}{2(n + 1)} \right), \quad 
		X_{l,j} = \sin \left( \frac{\pi (l + 1)(j + 1)}{(n + 1)} \right), \quad l,j \in \overline{0, n-1}.
	\end{equation*}

	The performance of Algorithm~\ref{alg:riemannian_sfett_lobpcg} for the Laplace operator with different amount of shared Tucker 
	factors and different ranks
	are pictured in Figure~\ref{fig:laplace_rel_err}.
	Figure~\ref{fig:time_to_iters_laplace_no_pot} demonstrates the reduction in computation time achieved by Algorithm~\ref{alg:riemannian_sfett_lobpcg} with an increasing number of shared Tucker factors.

	\begin{figure}[h!tp]
		\centering
		\includegraphics[width=0.9\linewidth]{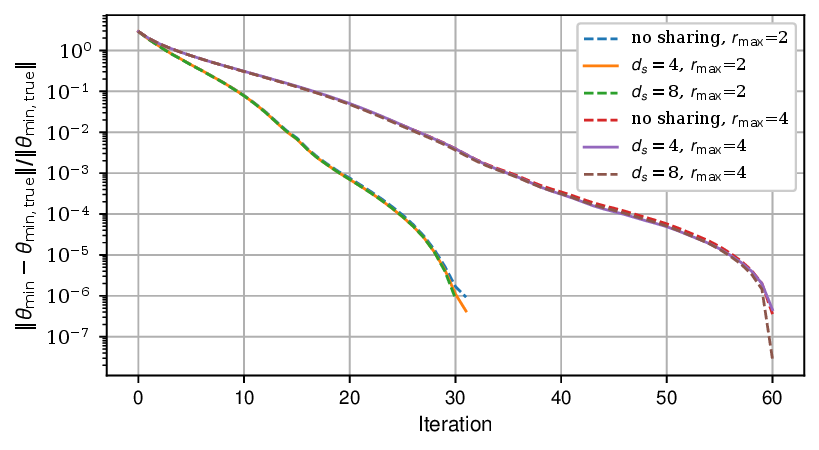}
		\caption{Relative error of the minimal eigenvalue approximation via Algorithm~\ref{alg:riemannian_sfett_lobpcg}
			for discretized Laplace operator of dimension $d = 8$,
		mode size $n = 32$ on the manifold of fixed SF-ETT rank for different ranks
        and shared Tucker factors amount}
		\label{fig:laplace_rel_err}
	\end{figure}
	\begin{figure}[h!tp]
		\centering
		\includegraphics[width=0.9\linewidth]{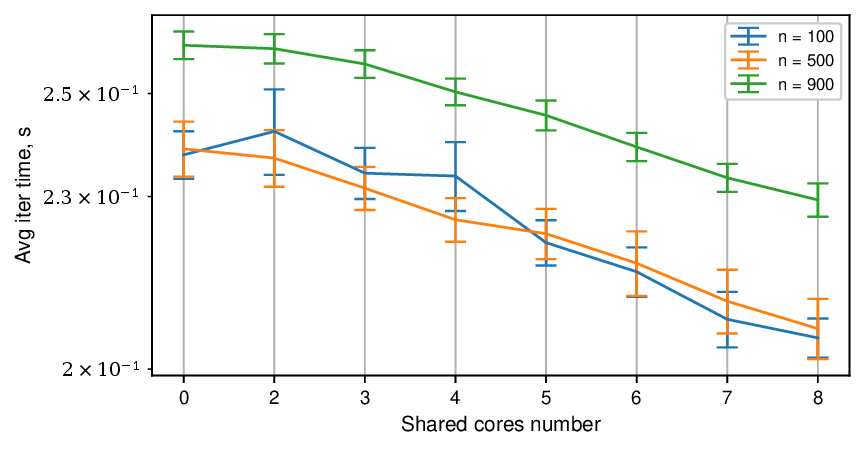}
		\caption{The average time (in seconds) of one loop iteration in Algorithm~\ref{alg:riemannian_sfett_lobpcg} for
		discretized Laplace operator of dimension $d = 8$ on the manifold of fixed SF-ETT rank for 
		different mode size $n$ and amount of shared Tucker factors}
		\label{fig:time_to_iters_laplace_no_pot}
	\end{figure}

	\subsubsection{Henon-Heiles}%
	\label{ssub:henon_heiles}
	
	Another setup is an eigenproblem given by the following matrix
	\begin{equation}\label{eq:henon_heiles_fun}
		H =  L  + \mathrm{diag}(\hat{V}),
	\end{equation}
	where $L$ is the discretized Laplace operator as in Section~\ref{ssub:laplace_operator} and $\hat{V}$ denotes a tensor of discretized Henon-Heiles potential function $V$ on a uniform grid:
	\begin{equation}\label{eq:henon_heiles_pot}
		V(x_1, \dotsc, x_m) = \frac{1}{2}\sum_{k=1}^{m} x_k^2 + \lambda \sum_{k=1}^{m-1} \left( x_k^2 x_{k + 1} - \frac{1}{3}x_{k + 1}^3 \right),
		\quad  \lambda = 0.111803.
	\end{equation}
	The tensor $\hat{V}$, representing the Henon-Heiles potential, has a maximum TT rank bounded by 3 \citep[Lemma~$4.10$]{dmrgqtt}. To get a proper SF-ETT approximation of the Henon-Heiles potential we utilize the same approach as in  
	Section~\ref{sub:grid_functions} without RStGD procedure. One may notice that 
	utilizing of such approximation for \eqref{eq:henon_heiles_fun} depends on the number of shared Tucker factors since
	SF-ETT approximation of the Henon-Heiles potential \eqref{eq:henon_heiles_pot} depends on it. 

	The experiment consists of approximating the Henon-Heiles potential \eqref{eq:henon_heiles_pot} and constructing the bilinear function \eqref{eq:henon_heiles_fun}, subsequently computing its minimal eigenpair using Algorithm~\ref{alg:riemannian_sfett_lobpcg}. For validation, this eigenpair was also calculated using the direct \texttt{eigh} solver from the \texttt{ttpy} package \citep{ttpy2012}, which serves as a reference.

	The performance of Algorithm~\ref{alg:riemannian_sfett_lobpcg} applied to \eqref{eq:henon_heiles_fun} is shown in Figure~\ref{fig:hh_rel_err} for varying numbers of shared Tucker factors and different ranks. Figure~\ref{fig:time_to_iters_laplace_hh} demonstrates the corresponding reduction in iteration time as the number of shared Tucker factors increases.  As the computations for \eqref{eq:henon_heiles_fun} are more complex, the increased number of shared factors yields a more notable speedup than in the case of the simple Laplace operator (see Figure~\ref{fig:time_to_iters_laplace_no_pot}).
	\begin{figure}[h!tp]
		\centering
		\includegraphics[width=0.9\linewidth]{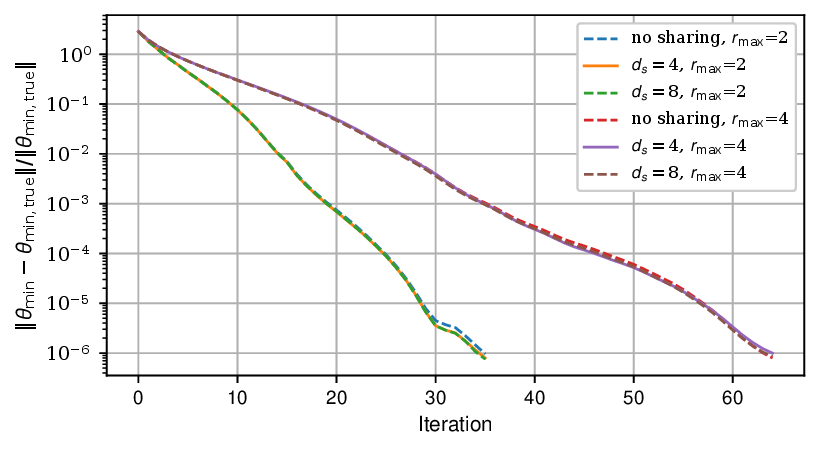}
		\caption{Relative error of the minimal eigenvalue approximation via Algorithm~\ref{alg:riemannian_sfett_lobpcg}
			for discretized function in \eqref{eq:henon_heiles_fun} function of dimension $d = 8$,
		mode size $n = 32$ on the manifold of fixed SF-ETT rank for different ranks
        and shared Tucker factors amount}
		\label{fig:hh_rel_err}
	\end{figure}

	\begin{figure}[h!tp]
		\centering
		\includegraphics[width=0.9\linewidth]{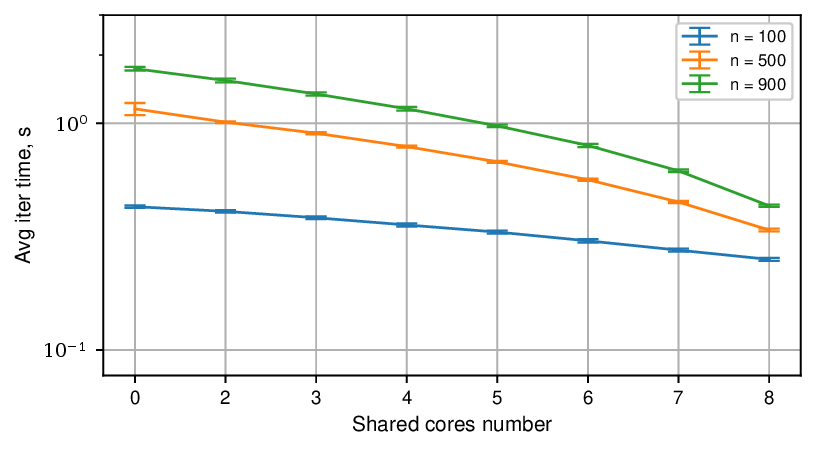}
		\caption{The average time (in seconds) of one loop iteration in Algorithm~\ref{alg:riemannian_sfett_lobpcg} for 
			function in \eqref{eq:henon_heiles_fun} function of dimension $d = 8$ on the manifold of fixed SF-ETT rank for 
			different mode size $n$ and amount of shared Tucker factors}
		\label{fig:time_to_iters_laplace_hh}
	\end{figure}

\section{Conclusion}\label{sec:conclusion}

We explore a Riemannian optimization approach for an ETT decomposition with an additional constraint that enforces equality of the factors. We develop a general framework, along with its software implementation, that provides the necessary components for Riemannian optimization algorithms. This framework can be readily applied to achieve further memory and time reduction where the classical ETT decomposition is of interest. 

\bmhead{Acknowledgements}

The authors are thankful to Danil Kolyadin for valuable consultations concerning the rounding algorithm.
\begin{appendices}
\section{Projection on tangent space of SF-ETT manifold}\label{appendix:tangent_space_proj}
	\begin{proposition}[Tangent space structure]\label{state:tangent_space_structure}
		Let $\mathcal{X} \in \mathrm{SFETT}\, (\mathbf{r}^{tt}, \mathbf{r}^{ts})$
		is a tangent foot with SF-ETT decomposition
		\begin{equation*}
		\mathcal{X} = \left\llbracket \mathcal{G}; V^{(1)}, \dotsc, V^{(d_t)}, U, \dotsc, U\right\rrbracket, \quad 
		\mathcal{G} = \mathcal{T}\left(W^{(1)}, \dotsc, W^{(d)}\right),
		\end{equation*}
		then the tangent space $T_{\mathcal{X}}\mathrm{SFETT}$ has the following structure
		\begin{equation*}
			\left(\bigoplus_{i = 1}^d L_0^{(i)}\right) \oplus  L_1  \oplus \dotsc \oplus L_{d_t} \oplus L_{d_s},
		\end{equation*}
		where 
		\begin{align*}
			&L_0^{(i)} = \\
			& \left\{ \xi \bigg| 
			\xi = \left\llbracket 
				\mathcal{T}\left(W_L^{(1)}, \dots, \dot{W}^{(i)}, \dotsc, W^{(d)}_R\right); 
				V^{(1)}, \dotsc,  U\right\rrbracket,
				L\left(\dot{W}^{(i)}\right)^\top L\left(W_L^{(i)}\right) = 0
				\right\}, \\
			&i \in \overline{1, d - 1}, \\
			&L_0^{(d)} = 
			 \left\{ \xi \bigg| 
			\xi = \left\llbracket 
				\mathcal{T}\left(W_L^{(1)}, \dots, W_L^{(d-1)}, \dot{W}^{(d)}\right); 
				V^{(1)}, \dotsc,  U\right\rrbracket
				\right\}, \\
			&L_i = 
			 \left\{ \xi \bigg| 
			\xi = \left\llbracket 
				\mathcal{G}; 
				V^{(1)}, \dotsc,  \dot{V}^{(i)}, \dotsc, V^{(d_t)}, U, \dotsc, U\right\rrbracket,
				\left( \dot{V}^{(i)} \right)^{\top} V^{(i)} = 0
				\right\}, \quad i \in \overline{1, d_t}, \\
			&L_s = 
			 \left\{ \xi \Bigg| 
			\xi = \sum_{i = d_t + 1}^{d} \left\llbracket 
				\mathcal{G}; 
				V^{(1)}, \dotsc,   V^{(d_t)}, U, \dotsc, \underbrace{\dot{U}}_i, \dotsc, U\right\rrbracket,
			 \dot{U} ^{\top} U= 0\right\}.
		\end{align*}
	\end{proposition}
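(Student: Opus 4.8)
The plan is to observe that the claimed decomposition is just a regrouping of the explicit tangent-space formula from Section~\ref{sub:sfett_tangent_space}, after which the only real work is to check that the regrouping is an \emph{orthogonal} (hence direct) sum. Indeed, every $\xi \in T_{\mathcal{X}}\mathrm{SFETT}$ is there written as the sum of a core-variation part $\sum_{i=1}^d \llbracket \mathcal{T}(W_L^{(1)},\dots,\dot{W}^{(i)},\dots,W_R^{(d)}); V^{(1)},\dots,U\rrbracket$, the non-shared factor parts $\sum_{i=1}^{d_t}\llbracket \mathcal{G}; V^{(1)},\dots,\dot{V}^{(i)},\dots,U\rrbracket$, and the shared factor part $\sum_{i=d_t+1}^{d}\llbracket \mathcal{G}; V^{(1)},\dots,\underbrace{\dot{U}}_i,\dots,U\rrbracket$, under the gauge conditions $L(\dot{W}^{(i)})^\top L(W_L^{(i)})=0$ $(i\le d-1)$, $(\dot{V}^{(i)})^\top V^{(i)}=0$ and $\dot{U}^\top U=0$. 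These summands are precisely the generic elements of $\bigoplus_{i}L_0^{(i)}$, of $L_1,\dots,L_{d_t}$, and of $L_s$; conversely each subspace lies in $T_{\mathcal{X}}\mathrm{SFETT}$ by setting all remaining variations to zero. Thus $T_{\mathcal{X}}\mathrm{SFETT}$ equals the sum of the listed subspaces, and it remains to show they are pairwise orthogonal with respect to the Frobenius inner product; orthogonality then forces the sum to be direct, since pairing a vanishing combination with each summand makes each of them vanish.

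I would dispatch first all pairs in which at least one member is a factor-variation subspace ($L_i$, $i\le d_t$, or $L_s$). Here I would use the standard identity $\langle \llbracket A; M_1,\dots,M_d\rrbracket, \llbracket B; N_1,\dots,N_d\rrbracket\rangle = \langle A, \llbracket B; M_1^\top N_1,\dots,M_d^\top N_d\rrbracket\rangle$. Since all factors except those being varied are the \emph{same} orthonormal $V^{(k)}$ or $U$, each such inner product reduces to a product over modes in which every unvaried mode contributes the identity, while in at least one mode the two elements carry differing factors and produce either $(V^{(i)})^\top\dot{V}^{(i)}$ or $U^\top\dot{U}$ --- both zero by the transposed gauge conditions. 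This makes the right-hand Tucker tensor identically zero, so the inner product vanishes; for $L_s$ the argument is applied termwise to its sum over shared modes. This settles $L_i\perp L_j$ $(i\ne j)$, $L_i\perp L_s$, and $L_0^{(k)}\perp L_i$, $L_0^{(k)}\perp L_s$ for all $k$.

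The remaining and most delicate case is $L_0^{(i)}\perp L_0^{(j)}$ for $i\ne j$, which I expect to be the main obstacle. Because both elements are contracted against the same orthonormal factors $V^{(1)},\dots,U$, their Frobenius product collapses to the pure TT inner product $\langle \mathcal{T}(\dots\dot{W}^{(i)}\dots),\mathcal{T}(\dots\dot{W}^{(j)}\dots)\rangle$. Assuming $i<j$, I would evaluate this by the left-to-right transfer-matrix sweep $\Lambda_k = \sum_{i_k} A^{(k)}(i_k)^\top\Lambda_{k-1}B^{(k)}(i_k)$ with $\Lambda_0 = 1$: the cores $W_L^{(1)},\dots,W_L^{(i-1)}$ agree in both tensors and are left-orthogonal, so $\Lambda_{i-1}=I$; at position $i$ the first tensor carries $\dot{W}^{(i)}$ while the second still carries $W_L^{(i)}$, whence $\Lambda_i = L(\dot{W}^{(i)})^\top L(W_L^{(i)})=0$ by the gauge condition, and all later $\Lambda_k$ stay zero. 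Crucially this uses the gauge condition only at the smaller index, so it remains valid when $j=d$ even though $\dot{W}^{(d)}$ is unconstrained. Having established orthogonality of all pairs, the sum is an orthogonal direct sum and coincides with $T_{\mathcal{X}}\mathrm{SFETT}$, which is the assertion.
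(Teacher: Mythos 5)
Your proposal is correct and follows essentially the same route as the paper: both reduce the statement to pairwise orthogonality of the subspaces, using the factor gauge conditions $(\dot V^{(i)})^\top V^{(i)}=0$, $\dot U^\top U=0$ for any pair involving a factor-variation subspace, and collapsing $\langle L_0^{(i)},L_0^{(j)}\rangle$ to a pure TT inner product (via orthonormality of the Tucker factors) that vanishes by the core gauge condition at the smaller index. Your write-up is in fact slightly more self-contained — you make explicit the transfer-matrix sweep, the case $j=d$, and the fact that the subspaces sum to the whole tangent space, whereas the paper asserts the TT inner product vanishes directly and cites the SF-Tucker paper for the $L_i\perp L_j$ and $L_i \perp L_s$ cases.
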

	\begin{proof}
		Let $\xi \in L_{0}^{(i)}, \eta \in L_j, \forall i \in \overline{1,d}, j \in \overline{1,d_t}$ then
		\begin{align*}
			\left\langle \xi, \eta \right\rangle &=  \\ 
			 & \left\langle 
				\left\llbracket \dot{\mathcal{G}}; 
				V^{(1)}, \dotsc, V^{(d_t)}, U, \dots, U\right\rrbracket, 
						\left\llbracket 
				\mathcal{G}; V^{(1)}, \dotsc, \dot{V}^{(j)}, \dotsc, V^{(d_t)}, U, \dots, U
			\right\rrbracket\right\rangle = 0,
		\end{align*}
		due to the gauge condition
		\begin{equation*}
			\left( \dot{V}^{(j)} \right)^\top V^{(j)} = 0.
		\end{equation*}

		Now let $\eta \in L_s$ then
		\begin{align*}
			&\left\langle \xi, \eta \right\rangle =  \\ 
			 & \sum_{i=1}^{d_s} \left\langle 
				\left\llbracket \dot{\mathcal{G}}; 
				V^{(1)}, \dotsc, V^{(d_t)}, U, \dots, U\right\rrbracket, 
						\left\llbracket 
				\mathcal{G}; V^{(1)}, \dotsc,  V^{(d_t)}, U, \dots, \underbrace{\dot{U}}_i, \dots, U
			\right\rrbracket\right\rangle = 0,
		\end{align*}
		due to the gauge condition
		\begin{equation*}
			\left( \dot{U} \right)^\top U^{(j)} = 0.
		\end{equation*}
		
		Finally, let $\xi \in L_0^{(i')}, i \neq i'$ then
		\begin{align*}
			&\left\langle \xi, \eta \right\rangle =  \\ 
			 & \left\langle 
				 \left\llbracket \dot{\mathcal{T}}_{\mathcal{X}} \left( 0, \dotsc,\dot{W}^{(i)},\dotsc,0\right); 
				V^{(1)}, \dotsc, V^{(d_t)}, U, \dots, U\right\rrbracket, \right.\\ 
			 &\left.\left\llbracket 
				\dot{\mathcal{T}}_{\mathcal{X}}\left( 0, \dotsc,\dot{W}^{(i')},\dotsc,0\right); V^{(1)}, \dotsc,  V^{(d_t)}, U, \dots,  U
			\right\rrbracket\right\rangle = \\
			 & \left\langle 
				 \left\llbracket \dot{\mathcal{T}}_{\mathcal{X}} \left( 0, \dotsc,\dot{W}^{(i)},\dotsc,0\right); 
				I, \dotsc, I, I, \dots, I\right\rrbracket, \right.
			 \left.
				\dot{\mathcal{T}}_{\mathcal{X}}\left( 0, \dotsc,\dot{W}^{(i')},\dotsc,0\right)
			\right\rangle = \\
			 & \left\langle  
			 \mathcal{T}\left(W_L^{(1)}, \dots, \dot{W}^{(i)}, \dots,  W_R^{(d)}\right),
			 \mathcal{T}\left(W_L^{(1)}, \dots, \dot{W}^{(i')}, \dots,  W_R^{(d)}\right)
		 \right\rangle = 0,
		\end{align*}
		due to the gauge condition
		\begin{equation*}
			L\left( W_L^{(i)} \right)^\top  L\left( \dot{W}^{(i)} \right)= 0.
		\end{equation*}
		Thus $\forall i \in \overline{1,d}, i' \in \overline{1,d}, i' \neq i, \forall j \in \overline{1,d_t}$ it is implied that 
		$L_0^{(i)}\perp L_0^{(i')}$, $L_0^{(i)}\perp L_j$, $L_0^{(i)}\perp L_s$. The orthogonality of $L_j$ against $L_{j'}$ and
		against $L_s$ is the same as in \citep{peshekhonov2024training}.
	\end{proof}

	\begin{proposition}[Tangent space projection]
		Let $Z \in \mathbb{R}^{n_1\times \dotsc\times n_{d_t}\times n_{s}\times \dotsc\times n_s}$ 
		and $\mathcal{X} \in \mathrm{SFETT}\, (\mathbf{r}^{tt}, \mathbf{r}^{ts})$
		is a tangent foot with SF-ETT decomposition 
		\begin{equation*}
			\mathcal{X} = \left\llbracket \mathcal{G}; V^{(1)}, \dotsc, V^{(d_t)}, U, \dotsc, U\right\rrbracket, \quad 
			\mathcal{G} = \mathcal{T}\left(W^{(1)}_L, \dotsc, W^{(d-1)}_L, \overline{W}^{(d)}\right),
		\end{equation*}
		then
		$P_{T_{\mathcal{X}} \mathrm{SFETT}\, (\mathbf{r}^{tt}, \mathbf{r}^{ts})} \left(Z \right) 
		\in T_{\mathcal{X}}\mathrm{SFETT}\, (\mathbf{r}^{tt}, \mathbf{r}^{ts})$ has 
		SF-ETT decomposition as follows
		\begin{align*}
			&P_{T_{\mathcal{X}}\mathrm{SFETT}\, (\mathbf{r}^{tt}, \mathbf{r}^{ts})} \left(Z \right) = \\
			&\left\llbracket \dot{\mathcal{T}}_{\mathcal{X}}\left(\dot{W}^{(1)},\dotsc,\dot{W}^{(d)}\right);
			 V^{(1)}, \dotsc, U \right\rrbracket + 
			\left\llbracket \mathcal{G};
			{\dot{V}^{(1)}}, \dotsc, 
			 U\right\rrbracket+ \dots +
			\left\llbracket \mathcal{G}; V^{(1)}, \dotsc, \dot{U} \right\rrbracket.\\
			&\dot{V}^{(i)} 
			= P_{\perp}\left(V^{(i)}\right)
			\left\llbracket Z;{V^{(1)}}^\top,\dotsc,\underbrace{I}_{i},\dotsc,{U}^\top\right\rrbracket_{(i)}
			\mathcal{G}_{(i)}^\dagger,\quad\forall i \in \overline{1,d_t},\\
			&\dot{U} = 
			P_{\perp}\left(U\right)
			\left(\sum_{i=d_t+1}^{d}
			\left\llbracket Z;{V^{(1)}}^\top,\dotsc,\underbrace{I}_{i},\dotsc,{U}^\top\right\rrbracket_{(i)}
			\mathcal{G}_{(i)}^\top\right)
			\left(\sum_{i=d_t+1}^{d}\mathcal{G}_{(i)}\mathcal{G}_{(i)}^\top\right)^{-1},\\
			&\dot{W}^{(i)} 
			= 
			P_{\perp}\left(L\left(W^{(i)}_L\right)\right)
			\left(I_{r^{\mathrm{t}}_{i}} \otimes_K
			\mathcal{G}_{ \le i - 1}^\top\right)
			\hat{\mathcal{G}}^{<i>}
			\mathcal{G}_{ \ge i + 1},\quad\forall i \in \overline{1, d_t},\\
			&\dot{W}^{(i)} 
			= 
			P_{\perp}\left(L\left(W^{(i)}_L\right)\right)\left(I_{r^{\mathrm{t}}_{s}} \otimes_K
			\mathcal{G}_{ \le i - 1}^\top\right)
			\hat{\mathcal{G}}^{<i>}
			\mathcal{G}_{ \ge i + 1},\quad\forall i \in \overline{d_t + 1, d - 1},\\
			&\dot{W}^{(d)}
			= 
			\left(I_{r^{\mathrm{t}}_{s}} \otimes_K
			\mathcal{G}_{ \le d - 1}^\top\right) 
			\hat{\mathcal{G}}^{<d>},\\ 
			&P_{\perp}\left(A\right)= \left(I - AA^\top \right), \quad \hat{\mathcal{G}} = 
			\left\llbracket Z;{V^{(1)}}^\top,\dotsc,{V^{(d_t)}}^\top,{V}^\top,\dotsc,{U}^\top\right\rrbracket.
		\end{align*}
	\end{proposition}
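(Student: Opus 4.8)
The plan is to build the projection out of the orthogonal decomposition of the tangent space established in Proposition~\ref{state:tangent_space_structure}. Since
\begin{equation*}
T_{\mathcal{X}}\mathrm{SFETT}\,(\mathbf{r}^{tt}, \mathbf{r}^{ts}) = \left(\bigoplus_{i=1}^d L_0^{(i)}\right) \oplus L_1 \oplus \dotsc \oplus L_{d_t} \oplus L_s
\end{equation*}
is an orthogonal direct sum, the orthogonal projection onto the whole space is the sum of the orthogonal projections onto the individual summands, $P_{T_{\mathcal{X}}\mathrm{SFETT}} = \sum_{i=1}^d P_{L_0^{(i)}} + \sum_{j=1}^{d_t} P_{L_j} + P_{L_s}$. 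It therefore suffices to derive each component separately and to read off the projected vectors $\dot{V}^{(j)}$, $\dot{U}$, $\dot{W}^{(i)}$ from the resulting SF-ETT expressions.

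First I would treat the non-shared factor subspaces $L_j$, where the core $\mathcal{G}$ is fixed and a single Tucker factor is varied under the gauge condition $(\dot{V}^{(j)})^\top V^{(j)} = 0$. Projecting $Z$ onto $L_j$ is the constrained least-squares problem of minimizing $\|Z - \xi\|$ over $\xi = \llbracket \mathcal{G}; V^{(1)}, \dotsc, \dot{V}^{(j)}, \dotsc, U\rrbracket$; exploiting the orthonormality of all the remaining factors, the objective reduces to a problem in the $j$-th matricization, where the contraction of $Z$ with the transposed factors is exactly $\llbracket Z; {V^{(1)}}^\top, \dotsc, \underbrace{I}_j, \dotsc, U^\top\rrbracket_{(j)}$. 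Solving the normal equations and enforcing the gauge via $P_{\perp}(V^{(j)}) = I - V^{(j)}(V^{(j)})^\top$ produces the stated formula with the Moore--Penrose factor $\mathcal{G}_{(j)}^\dagger$; this coincides with the SF-Tucker factor projection of \citep{peshekhonov2024training}, which I would reuse. The shared subspace $L_s$ is the part I expect to be the main obstacle: because the same perturbation $\dot{U}$ enters all shared modes simultaneously, the least-squares problem does not decouple mode-by-mode, and the cross terms among the shared matricizations survive. Consequently the normal equations feature the Gram-type matrix $\sum_{i=d_t+1}^d \mathcal{G}_{(i)}\mathcal{G}_{(i)}^\top$ rather than a single summand, and solving them after applying $P_{\perp}(U)$ yields the claimed expression for $\dot{U}$. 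I would again appeal to the shared-factor analysis of \citep{peshekhonov2024training} to justify the coupled normal equations and the invertibility of this Gram matrix.

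Finally I would handle the TT-core subspaces $L_0^{(i)}$, in which the orthogonal factors are held fixed and only the TT core is perturbed. Contracting $Z$ against all the factors produces the reduced tensor $\hat{\mathcal{G}} = \llbracket Z; {V^{(1)}}^\top, \dotsc, U^\top\rrbracket$, so that the projection of $Z$ onto $\bigoplus_i L_0^{(i)}$ becomes the standard TT tangent-space projection of $\hat{\mathcal{G}}$ at the core $\mathcal{G}$. Using the left-orthogonality of the cores to the left of index $i$, the right-orthogonality of those to the right, and the core gauge $L(W_L^{(i)})^\top L(\dot{W}^{(i)}) = 0$, the $i$-th contribution collapses to the interface product $(I \otimes_K \mathcal{G}_{\le i-1}^\top)\,\hat{\mathcal{G}}^{<i>}\,\mathcal{G}_{\ge i+1}$, with $P_{\perp}(L(W_L^{(i)}))$ imposing the gauge for $i \in \overline{1, d-1}$ and the final core $\dot{W}^{(d)}$ remaining gauge-free. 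Summing the three groups of contributions assembles the full SF-ETT form of $P_{T_{\mathcal{X}}\mathrm{SFETT}}(Z)$ claimed in the statement.
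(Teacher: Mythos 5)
Your proposal is correct and follows essentially the same route as the paper's proof: both decompose the tangent space into the mutually orthogonal subspaces $L_0^{(i)}$, $L_j$, $L_s$, reuse the SF-Tucker factor projections of \citep{peshekhonov2024training} for $\dot{V}^{(j)}$ and $\dot{U}$ (including the coupled Gram matrix $\sum_{i=d_t+1}^{d}\mathcal{G}_{(i)}\mathcal{G}_{(i)}^\top$ for the shared factor), and reduce the core subspaces $L_0^{(i)}$ to the TT tangent-space projection of the contracted tensor $\hat{\mathcal{G}}$. The only cosmetic difference is that the paper derives the core components via the variational identity $\langle P_{L_0^{(i)}}Z,\zeta_0^{(i)}\rangle=\langle Z,\zeta_0^{(i)}\rangle$ and an explicit appeal to \citep[Theorem 3.1]{lubich2015time}, whereas you invoke the standard TT projection directly — the same underlying argument.
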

	\begin{proof}
		According to Proposition~\ref{state:tangent_space_structure} every 
		$P_{T_{\mathcal{X}} \mathrm{SFETT}\, (\mathbf{r}^{tt}, \mathbf{r}^{ts})} \left(Z \right) 
		\in T_{\mathcal{X}} \mathrm{SFETT}$
		has a unique representation as sum of elements of $L_0^{(i)}, L_i, L_s$
		\begin{equation*}
			P_{T_{\mathcal{X}} \mathrm{SFETT}\, (\mathbf{r}^{tt}, \mathbf{r}^{ts})} \left(Z \right)  = 
			\sum_{i=1}^{d} \eta_0^{(i)} + \sum_{i=1}^{d_t} \eta_i + \eta_s, 
		\end{equation*}
		where components can be obtained via orthogonal projectors
		\begin{equation*}
			\eta_0^{(i)} = P_{L_0^{(i)}} Z, \quad \eta_i = P_{L_i}Z, \quad \eta_s = P_{L_s}Z.
		\end{equation*}
		If $Z \in \mathbb{R}^{n_1\times \dotsc\times n_{d_t}\times n_{s}\times \dotsc\times n_s}$ then 
		then projection formulas for $\dot{V}^{(i)}$ and $\dot{U}$ are the same as in \citep{peshekhonov2024training}
		since $\mathcal{X}$ can be seen as element of $\mathrm{SFT}\, (\mathbf{r}^{ts})$.

		So one need only to find out $P_{L_0^{(i)}} Z$. Since $P_{L_0^{(i)}}$ is an orthogonal 
		projector for any $i \in \overline{1,d}$
		it satisfies
		\begin{equation*}
			\left\langle P_{L_0^{(i)}} Z, \zeta_0^{(i)}\right\rangle 
			= \left\langle Z , \zeta_0^{(i)} \right\rangle, \quad
			\forall \zeta_0^{(i)} \in L_0^{(i)}.
		\end{equation*}
		Let $i = d$ then 
		\begin{align*}
			&P_{L_0^{(d)}} Z = \left\llbracket 
				\mathcal{T}\left(W_L^{(1)}, \dots, W_L^{(d-1)}, \dot{W}^{(d)}\right); 
				V^{(1)}, \dotsc, V^{(d_t)},  U, \dotsc,  U\right\rrbracket, \\ 
			&\zeta_0^{(d)} =\left\llbracket 
				\mathcal{T}\left(W_L^{(1)}, \dots, W_L^{(d-1)}, B\right); 
				V^{(1)}, \dotsc, V^{(d_t)}, U,\dotsc,  U\right\rrbracket. 
		\end{align*}
		So the scalar product at the left hand side due to the orthogonality of Tucker factors is equal to
		\begin{align*}
			 &\left\langle P_{L_0^{(d)}} Z, \zeta_0^{(d)}\right\rangle  = 
			 \left\langle  \left\llbracket 
				\mathcal{T}\left(W_L^{(1)}, \dots, W_L^{(d-1)}, \dot{W}^{(d)}\right); 
				V^{(1)}, \dotsc, V^{(d_t)},  U, \dotsc,  U\right\rrbracket, \right.\\ 
			&\left.\left\llbracket 
				\mathcal{T}\left(W_L^{(1)}, \dots, W_L^{(d-1)}, B\right); 
				V^{(1)}, \dotsc, V^{(d_t)}, U,\dotsc,  U\right\rrbracket\right\rangle =  \\ 
			& \left\langle \mathcal{T}\left(W_L^{(1)}, \dots, W_L^{(d-1)}, \dot{W}^{(d)}\right),
			\mathcal{T}\left(W_L^{(1)}, \dots, W_L^{(d-1)}, B\right) \right\rangle.
		\end{align*}
		The scalar product at the right hand side equals to
		\begin{align*}
			 &\left\langle Z, \zeta_0^{(d)}\right\rangle  = 
			 \left\langle  Z, 
			\left\llbracket 
				\mathcal{T}\left(W_L^{(1)}, \dots, W_L^{(d-1)}, B\right); 
				V^{(1)}, \dotsc, V^{(d_t)}, U,\dotsc,  U\right\rrbracket\right\rangle =  \\ 
			& \left\langle \left\llbracket Z;{V^{(1)}}^\top, \dotsc, {V^{(d_t)}}^\top, U^\top,\dotsc,  U^\top\right\rrbracket,
			\mathcal{T}\left(W_L^{(1)}, \dots, W_L^{(d-1)}, B\right) \right\rangle.
		\end{align*}
		Thus the following identity is obtained
		\begin{align*}
			& \left\langle \mathcal{T}\left(W_L^{(1)}, \dots, W_L^{(d-1)}, \dot{W}^{(d)}\right),
			\mathcal{T}\left(W_L^{(1)}, \dots, W_L^{(d-1)}, B\right) \right\rangle = \\
			& \left\langle \left\llbracket Z;{V^{(1)}}^\top, \dotsc, {V^{(d_t)}}^\top, U^\top,\dotsc,  U^\top\right\rrbracket,
			\mathcal{T}\left(W_L^{(1)}, \dots, W_L^{(d-1)}, B\right) \right\rangle, \quad \forall B.
		\end{align*}
		Due to the  in \citep[Theorem $3.1$]{lubich2015time} one can obtain
		\begin{align*}
			&\dot{W}^{(d)} =\\
			&\left(I \otimes_K
			\mathcal{T}\left(W^{(1)}, \dots,  W^{(d)}\right)_{ \le d - 1}^\top\right) 
			\left\llbracket Z;{V^{(1)}}^\top, \dotsc, {V^{(d_t)}}^\top, U^\top,\dotsc,  U^\top\right\rrbracket^{<d>}.
		\end{align*}
		Let $i \in \overline{1,d-1}$ then 
		\begin{align*}
			&P_{L_0^{(i)}} Z = \left\llbracket 
				\mathcal{T}\left(W_L^{(1)}, \dots, \dot{W}^{(d)}, \dotsc, W_R^{(d)}, \right); 
				V^{(1)}, \dotsc, V^{(d_t)},  U, \dotsc,  U\right\rrbracket, \\ 
			&\zeta_0^{(d)} =\left\llbracket 
				\mathcal{T}\left(W_L^{(1)}, \dots, B, \dotsc, W_R^{(d)}, \right); 
				V^{(1)}, \dotsc, V^{(d_t)}, U,\dotsc,  U\right\rrbracket. 
		\end{align*}
		Left hand side scalar product is equal to
		\begin{align*}
			 &\left\langle P_{L_0^{(i)}} Z, \zeta_0^{(i)}\right\rangle  = 
			 \left\langle  \left\llbracket 
				\mathcal{T}\left(W_L^{(1)}, \dots, \dot{W}^{(d)}, \dotsc, W_R^{(d)} \right); 
				V^{(1)}, \dotsc, V^{(d_t)},  U, \dotsc,  U\right\rrbracket, \right.\\ 
			&\left.\left\llbracket 
				\mathcal{T}\left(W_L^{(1)}, \dots, B, \dotsc, W_R^{(d)} \right); 
				V^{(1)}, \dotsc, V^{(d_t)}, U,\dotsc,  U\right\rrbracket\right\rangle =  \\ 
			& \left\langle
				\mathcal{T}\left(W_L^{(1)}, \dots, \dot{W}^{(d)}, \dotsc, W_R^{(d)} \right),
				\mathcal{T}\left(W_L^{(1)}, \dots, B, \dotsc, W_R^{(d)} \right)
		\right\rangle.
		\end{align*}
		Right hand side equals to
		\begin{align*}
			 &\left\langle Z, \zeta_0^{(i)}\right\rangle  = 
			 \left\langle  Z, 
			\left\llbracket 
				\mathcal{T}\left(W_L^{(1)}, \dots, B, \dotsc, W_R^{(d)} \right); 
				V^{(1)}, \dotsc, V^{(d_t)}, U,\dotsc,  U\right\rrbracket\right\rangle =  \\ 
			& \left\langle \left\llbracket Z;{V^{(1)}}^\top, \dotsc, {V^{(d_t)}}^\top, U^\top,\dotsc,  U^\top\right\rrbracket,
				\mathcal{T}\left(W_L^{(1)}, \dots, B, \dotsc, W_R^{(d)} \right) \right\rangle. 
		\end{align*}
		Thus the following holds
		\begin{align*}
			& \left\langle
				\mathcal{T}\left(W_L^{(1)}, \dots, \dot{W}^{(d)}, \dotsc, W_R^{(d)} \right),
				\mathcal{T}\left(W_L^{(1)}, \dots, B, \dotsc, W_R^{(d)} \right)
		\right\rangle = \\
			& \left\langle \left\llbracket Z;{V^{(1)}}^\top, \dotsc, {V^{(d_t)}}^\top, U^\top,\dotsc,  U^\top\right\rrbracket,
				\mathcal{T}\left(W_L^{(1)}, \dots, B, \dotsc, W_R^{(d)} \right) \right\rangle, \quad \forall B.
		\end{align*}
		Due to the \citep[Theorem $3.1$]{lubich2015time} one can obtain
		\begin{align*}
			&\dot{W}^{(i)} =
			\left(I - L \left( W^{(i)}_L \right)L \left( W^{(i)}_L \right)^\top\right) 
			 \left(I \otimes_K
			\mathcal{T}\left(W^{(1)}, \dots, W^{(d)}\right)_{ \le i - 1}^\top\right) \cdot \\
			&\left\llbracket Z;{V^{(1)}}^\top, \dotsc, {V^{(d_t)}}^\top, U^\top,\dotsc,  U^\top\right\rrbracket^{<i>}
			\mathcal{T}\left(W^{(1)}, \dots, W^{(d)}\right)_{ \ge i + 1}.
		\end{align*}
	\end{proof}
\section{Riemannian Autodiff}\label{appendix:riemannian_autodiff}
	\begin{proposition}[Partial derivatives]\label{state:partial_derivs}
		Let $\mathcal{X} \in \mathrm{SFETT}\, (\mathbf{r}^{tt}, \mathbf{r}^{ts})$ and 
		\begin{equation*}
			\mathcal{X} = \left\llbracket \mathcal{G}; V^{(1)}, \dotsc, V^{(d_t)}, U, \dotsc, U\right\rrbracket, \quad 
			\mathcal{G} = \mathcal{T}\left(W^{(1)}_L, \dotsc, W^{(d-1)}_L, \overline{W}^{(d)}\right).
		\end{equation*}
		Let also $\mathcal{C}$ operator be a following operator 
		\begin{align*}
			&\mathcal{C}_{\mathcal{X}}\left(S^{(1)}, \dotsc, S^{(d)}; B^{(1)}, \dotsc, B^{(d_t)}; A\right) = \\
			&\left\llbracket \dot{\mathcal{T}}_{\mathcal{X}}\left(S^{(1)},\dotsc,S^{(d)}\right);
			 U^{(1)}, \dotsc, U \right\rrbracket + 
			\left\llbracket \mathcal{G};
			{B^{(1)}}, \dotsc, 
			 U\right\rrbracket+ \dots +
			\left\llbracket \mathcal{G}; U^{(1)}, \dotsc, A \right\rrbracket,\\
			& \mathcal{C}_{\mathcal{X}}(0, \dotsc, \overline{W}^{(d)}; 0, \dotsc, 0; 0 ) =\mathcal{C}_{\mathcal{X}}(\theta) = \mathcal{X}.
		\end{align*}
		Let $g$ be a scalar function defined as
		\begin{equation*}
			g = f \circ \mathcal{C}_{\mathcal{X}},
		\end{equation*}
		where $f: \mathbb{R}^{n_1 \times \dots n_{d_t} \times n_s \times \dots \times n_s} \to \mathbb{R}$ is 
		an arbitrary smooth function defined in some neighborhood of point $\mathcal{X}$. Then
		\begin{align*}
			& \frac{\partial {\left(f \circ \mathcal{C}_{\mathcal{X}} \right)}}
			{\partial {B^{(i)}}} \left( \theta \right) = 
			\left\llbracket \nabla f\left(\mathcal{X}\right);
			{V^{(1)}}^\top,\dotsc,\underbrace{I}_{i},\dotsc,{U}^\top\right\rrbracket_{(i)}
			\mathcal{G}_{(i)}^\top,\quad\forall i \in \overline{1,d_t},\\
			& \frac{\partial {\left(f \circ \mathcal{C}_{\mathcal{X}} \right)}}
			{\partial {A}} \left( \theta \right) = 
			\sum_{i=d_t+1}^{d}
			\left\llbracket \nabla f\left(\mathcal{X}\right);
			{V^{(1)}}^\top,\dotsc,\underbrace{I}_{i},\dotsc,{U}^\top\right\rrbracket_{(i)}
			\mathcal{G}_{(i)}^\top, \\
			& \frac{\partial {\left(f \circ \mathcal{C}_{\mathcal{X}} \right)}}
			{\partial {S^{(i)}}} \left( \theta \right) = 
			\left(I_{r^{\mathrm{t}}_{i}} \otimes_K
			\mathcal{G}_{ \le i - 1}^\top\right)
			\hat{\mathcal{G}}^{<i>}
			\mathcal{G}_{ \ge i + 1},\quad\forall i \in \overline{1, d_t},\\
			& \frac{\partial {\left(f \circ \mathcal{C}_{\mathcal{X}} \right)}}
			{\partial {S^{(i)}}} \left( \theta \right) = 
			\left(I_{r^{\mathrm{t}}_{s}} \otimes_K
			\mathcal{G}_{ \le i - 1}^\top\right)
			\hat{\mathcal{G}}^{<i>}
			\mathcal{G}_{ \ge i + 1},\quad\forall i \in \overline{d_{t + 1}, d_s},\\
			& \frac{\partial {\left(f \circ \mathcal{C}_{\mathcal{X}} \right)}}
			{\partial {S^{(d)}}} \left( \theta \right) = 
			\left(I_{r^{\mathrm{t}}_{s}} \otimes_K
			\mathcal{G}_{ \le i - 1}^\top\right)
			\hat{\mathcal{G}}^{<d>}, \\
			&\hat{\mathcal{G}} =
			\left\llbracket \nabla f(\mathcal{X});
			{V^{(1)}}^\top,\dotsc,{V^{(d_t)}}^\top,{U}^\top,\dotsc,{U}^\top\right\rrbracket,\\
			&\mathcal{G}= \mathcal{T}\left( W^{(1)}_L, \dotsc, W^{(d-1)}_L, \overline{W}^{(d)}\right).
		\end{align*}
	\end{proposition}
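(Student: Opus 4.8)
The plan is to reduce the entire computation to the chain rule, exploiting that the construction operator $\mathcal{C}_{\mathcal{X}}$ is \emph{linear} in its parameters — linear in each block $S^{(k)}$, $B^{(i)}$, $A$ separately and jointly — with base value $\mathcal{C}_{\mathcal{X}}(\theta) = \mathcal{X}$. First I would make this linearity precise: each summand of $\mathcal{C}_{\mathcal{X}}$ is a Tucker contraction that is linear in exactly one factor block, and $\dot{\mathcal{T}}_{\mathcal{X}}$ is linear in the cores $S^{(k)}$; hence every partial differential $D_P\mathcal{C}_{\mathcal{X}}$ is constant in $\theta$ and no higher-order terms arise. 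Writing $g = f\circ\mathcal{C}_{\mathcal{X}}$, the chain rule then gives, for any parameter block $P$,
\[
\frac{\partial g}{\partial P}(\theta) = \left(D_P\mathcal{C}_{\mathcal{X}}\big|_\theta\right)^{*}\bigl[\nabla f(\mathcal{X})\bigr],
\]
where $(\cdot)^{*}$ is the adjoint taken with respect to the Euclidean inner products on the parameter and ambient spaces, and I use $\mathcal{C}_{\mathcal{X}}(\theta)=\mathcal{X}$ to evaluate $\nabla f$ at $\mathcal{X}$. The task then splits into computing three families of adjoints.

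For the non-shared Tucker blocks $B^{(i)}$, the partial differential in a direction $\delta B$ is $\left\llbracket\mathcal{G};V^{(1)},\dotsc,\underbrace{\delta B}_{i},\dotsc,U\right\rrbracket$, since $B^{(i)}$ enters $\mathcal{C}_{\mathcal{X}}$ only through the single summand in which it occupies the $i$-th factor slot. I would then evaluate $\bigl\langle\nabla f(\mathcal{X}),\left\llbracket\mathcal{G};\dotsc,\delta B,\dotsc\right\rrbracket\bigr\rangle$ through the $i$-th matricization, using the Tucker identity $\mathcal{Y}_{(i)} = \delta B\,\mathcal{G}_{(i)}K^{\top}$ with $K$ the Kronecker product of the remaining factors, and rearrange the resulting trace into the form $\langle M,\delta B\rangle$ to read off $M = \left\llbracket\nabla f(\mathcal{X});{V^{(1)}}^{\top},\dotsc,I,\dotsc,U^{\top}\right\rrbracket_{(i)}\mathcal{G}_{(i)}^{\top}$. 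For the shared block $A$, which appears simultaneously in every slot $i\in\overline{d_t + 1, d}$, linearity of the differential produces precisely the sum over these $i$ of the same expression.

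For the core blocks $S^{(i)}$, which enter only through $\dot{\mathcal{T}}_{\mathcal{X}}$, I would first contract $\nabla f(\mathcal{X})$ against all Tucker factors to form $\hat{\mathcal{G}} = \left\llbracket\nabla f(\mathcal{X});{V^{(1)}}^{\top},\dotsc,U^{\top}\right\rrbracket$, reducing the problem to differentiating the TT map of the core with respect to its $i$-th core and taking the adjoint. Invoking the standard TT-gradient identity (as in the cited \citep[Theorem~$3.1$]{lubich2015time}), expressed through the left and right interface matrices $\mathcal{G}_{\le i-1}$ and $\mathcal{G}_{\ge i+1}$, yields $\bigl(I_{r^{\mathrm{t}}_{i}}\otimes_K\mathcal{G}_{\le i-1}^{\top}\bigr)\hat{\mathcal{G}}^{<i>}\mathcal{G}_{\ge i+1}$, with the case $i=d$ degenerating because no right interface is present.

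The Tucker-factor computations are routine adjoint-of-contraction manipulations, so I expect the main obstacle to be the core block: one must carefully track the unfolding $\hat{\mathcal{G}}^{<i>}$ together with the interface matrices and verify that the adjoint of the core-derivative of the TT map is exactly the stated Kronecker–interface expression. This index and orthogonality bookkeeping — and the correct invocation of the TT calculus for the degenerate boundary core $i=d$ — is where I would be most careful.
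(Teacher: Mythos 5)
Your proposal is correct and follows essentially the same route as the paper: chain rule plus blockwise linearity of $\mathcal{C}_{\mathcal{X}}$, with each partial derivative obtained as the adjoint of the corresponding per-block differential applied to $\nabla f(\mathcal{X})$ — for the core blocks $S^{(i)}$ the paper performs exactly your interface-matrix computation, via the identity $\mathrm{vec}(AXB^\top)=(B\otimes_K A)\,\mathrm{vec}(X)$ applied to $\mathcal{G}_{\neq i}$, which is the same content as the Lubich TT-calculus result you invoke. The only cosmetic difference is that the paper cites \citep{peshekhonov2024training} for the $B^{(i)}$ and shared-$A$ formulas rather than re-deriving them by matricization and trace rearrangement as you do.
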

	\begin{proof}
		The partial derivatives with respect to $B^{(i)}, i \in \overline{1, d_t}$ and $A$ are 
		the same as in the case of SF-Tucker decomposition \citep{peshekhonov2024training}.
		So one only need to check the equalities above for the case 
		of partial derivatives with respect to $S^{(i)}, i \in \overline{1,d}$.

		Now  let $i \in \overline{1, d-1}$
		\begin{align*}
			&\mathrm{D}_{S^{(i)}} \left( (f \circ \mathcal{C}_{\mathcal{X}})(\theta) \right)\left[dS^{(i)}\right] = 
			\left[\mathcal{Y} =  \mathcal{C}_{\mathcal{X}} (\theta)\right] = 
			\mathrm{D}_{} \left( f(\mathcal{\mathcal{Y}}) \right)\left[
				\mathrm{D}_{S^{(i)}} \left( \mathcal{C}_{\mathcal{X}}(\theta) \right) \left[dS^{(i)}\right]  
			\right] = \\
			& \left\langle \nabla f(\mathcal{X}),
			\mathrm{D}_{S^{(i)}} \left( \mathcal{C}_{\mathcal{X}}(\theta) \right) \left[dS^{(i)}\right]\right\rangle = \\
			&\left\langle \nabla f(\mathcal{X}), \left\llbracket  
			\mathrm{D}_{S^{(i)}} \left(\dot{\mathcal{T}}_{\mathcal{X}}\left(S^{(1)}, \dotsc, S^{(d)}\right)\right)
			\left[dS^{(i)}\right]\Bigg|_{\theta}; 
			V^{(1)}, \dotsc, V^{(d_t)}, U, \dotsc, U
			\right\rrbracket\right\rangle = \\
			& \left\langle \nabla f(\mathcal{X}), \left\llbracket  
				\mathcal{T}\left(W^{(1)}_L, \dotsc, dS^{(i)}, \dotsc, W_R^{(d)}\right);
			V^{(1)}, \dotsc, V^{(d_t)}, U, \dotsc, U
			\right\rrbracket\right\rangle = \\
			&  \left\langle \left\llbracket  
				\nabla f(\mathcal{X});
			{V^{(1)}}^\top, \dotsc, {V^{(d_t)}}^\top, U^\top, \dotsc, U^\top
			\right\rrbracket,   
				\mathcal{T}\left(W^{(1)}_L, \dotsc, dS^{(i)}, \dotsc, W_R^{(d)}\right)
			\right\rangle.
		\end{align*}	
		Now applying vectorization operation on both of the vectors inside the scalar product lefts the result 
		unchanged so
		\begin{align*}
			&\mathrm{D}_{S^{(i)}} \left( (f \circ \mathcal{C}_{\mathcal{X}})(\theta) \right)\left[dS^{(i)}\right] = \\
			&  \left\langle \mathrm{vec}\,
					\underbrace{\left(\left\llbracket  \nabla f(\mathcal{X});
						{V^{(1)}}^\top, \dotsc, {V^{(d_t)}}^\top, U^\top, \dotsc, U^\top
					\right\rrbracket\right)}_{\nabla \hat{f}(\mathcal{X})},
				\mathcal{G}_{\neq i} \mathrm{vec}\, \left( dS^{(i)} \right) 
			\right\rangle = \\
			&  \left\langle \left( \mathcal{G}_{\ge i + 1} \otimes_K I \otimes_K \mathcal{G}_{\le i - 1}
				\right)^\top 
				\mathrm{vec}\,\left(\nabla \hat{f}(\mathcal{X})\right),   
				 \mathrm{vec}\, \left( dS^{(i)} \right) 
			 \right\rangle =  \\
			&\left[ \mathrm{vec}\, (AXB^\top) = (B \otimes_K A) \mathrm{vec} (X)\right] =  \\
			&\left\langle 
			\mathrm{vec}\,   
				\underbrace{\left(\left(I \otimes\mathcal{G}_{\le i - 1}^\top\right)\nabla \hat{f}(\mathcal{X})^{<i>}
				\mathcal{G}_{\ge i + 1}\right)}_{{\partial (f \circ \mathcal{C}_{{\mathcal{X}}})} / {\partial S^{(i)}} },
				 \mathrm{vec}\, \left( dS^{(i)} \right)
			\right\rangle.
		\end{align*}
		
		For $i = d$ one can perform the same operations.
	\end{proof}
		
\end{appendices}




\begin{thebibliography}{43}
\ifx \bisbn   \undefined \def \bisbn  #1{ISBN #1}\fi
\ifx \binits  \undefined \def \binits#1{#1}\fi
\ifx \bauthor  \undefined \def \bauthor#1{#1}\fi
\ifx \batitle  \undefined \def \batitle#1{#1}\fi
\ifx \bjtitle  \undefined \def \bjtitle#1{#1}\fi
\ifx \bvolume  \undefined \def \bvolume#1{\textbf{#1}}\fi
\ifx \byear  \undefined \def \byear#1{#1}\fi
\ifx \bissue  \undefined \def \bissue#1{#1}\fi
\ifx \bfpage  \undefined \def \bfpage#1{#1}\fi
\ifx \blpage  \undefined \def \blpage #1{#1}\fi
\ifx \burl  \undefined \def \burl#1{\textsf{#1}}\fi
\ifx \doiurl  \undefined \def \doiurl#1{\url{https://doi.org/#1}}\fi
\ifx \betal  \undefined \def \betal{\textit{et al.}}\fi
\ifx \binstitute  \undefined \def \binstitute#1{#1}\fi
\ifx \binstitutionaled  \undefined \def \binstitutionaled#1{#1}\fi
\ifx \bctitle  \undefined \def \bctitle#1{#1}\fi
\ifx \beditor  \undefined \def \beditor#1{#1}\fi
\ifx \bpublisher  \undefined \def \bpublisher#1{#1}\fi
\ifx \bbtitle  \undefined \def \bbtitle#1{#1}\fi
\ifx \bedition  \undefined \def \bedition#1{#1}\fi
\ifx \bseriesno  \undefined \def \bseriesno#1{#1}\fi
\ifx \blocation  \undefined \def \blocation#1{#1}\fi
\ifx \bsertitle  \undefined \def \bsertitle#1{#1}\fi
\ifx \bsnm \undefined \def \bsnm#1{#1}\fi
\ifx \bsuffix \undefined \def \bsuffix#1{#1}\fi
\ifx \bparticle \undefined \def \bparticle#1{#1}\fi
\ifx \barticle \undefined \def \barticle#1{#1}\fi
\bibcommenthead
\ifx \bconfdate \undefined \def \bconfdate #1{#1}\fi
\ifx \botherref \undefined \def \botherref #1{#1}\fi
\ifx \url \undefined \def \url#1{\textsf{#1}}\fi
\ifx \bchapter \undefined \def \bchapter#1{#1}\fi
\ifx \bbook \undefined \def \bbook#1{#1}\fi
\ifx \bcomment \undefined \def \bcomment#1{#1}\fi
\ifx \oauthor \undefined \def \oauthor#1{#1}\fi
\ifx \citeauthoryear \undefined \def \citeauthoryear#1{#1}\fi
\ifx \endbibitem  \undefined \def \endbibitem {}\fi
\ifx \bconflocation  \undefined \def \bconflocation#1{#1}\fi
\ifx \arxivurl  \undefined \def \arxivurl#1{\textsf{#1}}\fi
\csname PreBibitemsHook\endcsname

\bibitem[\protect\citeauthoryear{Absil et~al.}{2008}]{absil2008optimization}
\begin{bbook}
\bauthor{\bsnm{Absil}, \binits{P.-A.}},
\bauthor{\bsnm{Mahony}, \binits{R.}},
\bauthor{\bsnm{Sepulchre}, \binits{R.}}:
\bbtitle{Optimization Algorithms on Matrix Manifolds}.
\bpublisher{Princeton University Press},
\blocation{Princeton, NJ}
(\byear{2008}).
\doiurl{10.1007/s10208-009-9051-7}
\end{bbook}
\endbibitem

\bibitem[\protect\citeauthoryear{Balazevic et~al.}{2019}]{balavzevic2019tucker}
\begin{bchapter}
\bauthor{\bsnm{Balazevic}, \binits{I.}},
\bauthor{\bsnm{Allen}, \binits{C.}},
\bauthor{\bsnm{Hospedales}, \binits{T.}}:
\bctitle{{T}uck{ER}: Tensor factorization for knowledge graph completion}.
In: \beditor{\bsnm{Inui}, \binits{K.}},
\beditor{\bsnm{Jiang}, \binits{J.}},
\beditor{\bsnm{Ng}, \binits{V.}},
\beditor{\bsnm{Wan}, \binits{X.}} (eds.)
\bbtitle{Proceedings of the 2019 Conference on Empirical Methods in Natural Language Processing and the 9th International Joint Conference on Natural Language Processing (EMNLP-IJCNLP)},
pp. \bfpage{5185}--\blpage{5194}.
\bpublisher{Association for Computational Linguistics},
\blocation{Hong Kong, China}
(\byear{2019}).
\doiurl{10.18653/v1/D19-1522} .
\burl{https://aclanthology.org/D19-1522/}
\end{bchapter}
\endbibitem

\bibitem[\protect\citeauthoryear{Baker}{2008}]{baker2008riemannian}
\begin{bbook}
\bauthor{\bsnm{Baker}, \binits{C.G.}}:
\bbtitle{Riemannian Manifold Trust-region Methods with Applications to Eigenproblems}.
\bpublisher{The Florida State University},
\blocation{Tallahassee, FL}
(\byear{2008})
\end{bbook}
\endbibitem

\bibitem[\protect\citeauthoryear{Bogachev et~al.}{2025}]{bogachev2025riemannlora}
\begin{barticle}
\bauthor{\bsnm{Bogachev}, \binits{V.}},
\bauthor{\bsnm{Aletov}, \binits{V.}},
\bauthor{\bsnm{Molozhavenko}, \binits{A.}},
\bauthor{\bsnm{Bobkov}, \binits{D.}},
\bauthor{\bsnm{Soboleva}, \binits{V.}},
\bauthor{\bsnm{Alanov}, \binits{A.}},
\bauthor{\bsnm{Rakhuba}, \binits{M.}}:
\batitle{Riemannlora: A unified riemannian framework for ambiguity-free lora optimization}.
\bjtitle{arXiv preprint arXiv:2507.12142}
(\byear{2025})
\doiurl{10.48550/arXiv.2507.12142}
\end{barticle}
\endbibitem

\bibitem[\protect\citeauthoryear{Boumal}{2023}]{boumal2023introduction}
\begin{bbook}
\bauthor{\bsnm{Boumal}, \binits{N.}}:
\bbtitle{An Introduction to Optimization on Smooth Manifolds}.
\bpublisher{Cambridge University Press},
\blocation{Cambridge, UK}
(\byear{2023}).
\doiurl{10.1017/9781009166164}
\end{bbook}
\endbibitem

\bibitem[\protect\citeauthoryear{Dolgov et~al.}{2014}]{dolgov2014computation}
\begin{barticle}
\bauthor{\bsnm{Dolgov}, \binits{S.V.}},
\bauthor{\bsnm{Khoromskij}, \binits{B.N.}},
\bauthor{\bsnm{Oseledets}, \binits{I.V.}},
\bauthor{\bsnm{Savostyanov}, \binits{D.V.}}:
\batitle{Computation of extreme eigenvalues in higher dimensions using block tensor train format}.
\bjtitle{Computer Physics Communications}
\bvolume{185}(\bissue{4}),
\bfpage{1207}--\blpage{1216}
(\byear{2014})
\doiurl{10.1016/j.cpc.2013.12.017}
\end{barticle}
\endbibitem

\bibitem[\protect\citeauthoryear{Grasedyck et~al.}{2013}]{grasedyck2013literature}
\begin{barticle}
\bauthor{\bsnm{Grasedyck}, \binits{L.}},
\bauthor{\bsnm{Kressner}, \binits{D.}},
\bauthor{\bsnm{Tobler}, \binits{C.}}:
\batitle{A literature survey of low-rank tensor approximation techniques}.
\bjtitle{GAMM-Mitteilungen}
\bvolume{36}(\bissue{1}),
\bfpage{53}--\blpage{78}
(\byear{2013})
\doiurl{10.1002/gamm.201310004}
\end{barticle}
\endbibitem

\bibitem[\protect\citeauthoryear{Gao et~al.}{2024}]{gao2024riemannian}
\begin{barticle}
\bauthor{\bsnm{Gao}, \binits{B.}},
\bauthor{\bsnm{Peng}, \binits{R.}},
\bauthor{\bsnm{Yuan}, \binits{Y.-x.}}:
\batitle{Riemannian preconditioned algorithms for tensor completion via tensor ring decomposition}.
\bjtitle{Computational Optimization and Applications}
\bvolume{88}(\bissue{2}),
\bfpage{443}--\blpage{468}
(\byear{2024})
\doiurl{10.1007/s10589-024-00559-7}
\end{barticle}
\endbibitem

\bibitem[\protect\citeauthoryear{Harshman}{1978}]{harshman1978models}
\begin{bchapter}
\bauthor{\bsnm{Harshman}, \binits{R.A.}}:
\bctitle{Models for analysis of asymmetrical relationships among n objects or stimuli}.
In: \bbtitle{Paper Presented at the First Joint Meeting of the Psychometric Society and the Society of Mathematical Psychology, Hamilton}
(\byear{1978})
\end{bchapter}
\endbibitem

\bibitem[\protect\citeauthoryear{Hitchcock}{1927}]{hitchcock1927expression}
\begin{barticle}
\bauthor{\bsnm{Hitchcock}, \binits{F.L.}}:
\batitle{The expression of a tensor or a polyadic as a sum of products}.
\bjtitle{Journal of Mathematics and Physics}
\bvolume{6}(\bissue{1-4}),
\bfpage{164}--\blpage{189}
(\byear{1927})
\doiurl{10.1002/sapm192761164}
\end{barticle}
\endbibitem

\bibitem[\protect\citeauthoryear{Hackbusch and K{\"u}hn}{2009}]{hackbusch2009new}
\begin{barticle}
\bauthor{\bsnm{Hackbusch}, \binits{W.}},
\bauthor{\bsnm{K{\"u}hn}, \binits{S.}}:
\batitle{A new scheme for the tensor representation}.
\bjtitle{Journal of Fourier analysis and applications}
\bvolume{15}(\bissue{5}),
\bfpage{706}--\blpage{722}
(\byear{2009})
\doiurl{10.1007/s00041-009-9094-9}
\end{barticle}
\endbibitem

\bibitem[\protect\citeauthoryear{Holtz et~al.}{2012a}]{holtz2012alternating}
\begin{barticle}
\bauthor{\bsnm{Holtz}, \binits{S.}},
\bauthor{\bsnm{Rohwedder}, \binits{T.}},
\bauthor{\bsnm{Schneider}, \binits{R.}}:
\batitle{The alternating linear scheme for tensor optimization in the tensor train format}.
\bjtitle{SIAM Journal on Scientific Computing}
\bvolume{34}(\bissue{2}),
\bfpage{683}--\blpage{713}
(\byear{2012})
\doiurl{10.1137/100818893}
\end{barticle}
\endbibitem

\bibitem[\protect\citeauthoryear{Holtz et~al.}{2012b}]{holtz2012manifolds}
\begin{barticle}
\bauthor{\bsnm{Holtz}, \binits{S.}},
\bauthor{\bsnm{Rohwedder}, \binits{T.}},
\bauthor{\bsnm{Schneider}, \binits{R.}}:
\batitle{On manifolds of tensors of fixed tt-rank}.
\bjtitle{Numerische Mathematik}
\bvolume{120}(\bissue{4}),
\bfpage{701}--\blpage{731}
(\byear{2012})
\doiurl{10.1007/s00211-011-0419-7}
\end{barticle}
\endbibitem

\bibitem[\protect\citeauthoryear{Kolda and Bader}{2009}]{kolda2009tensor}
\begin{barticle}
\bauthor{\bsnm{Kolda}, \binits{T.G.}},
\bauthor{\bsnm{Bader}, \binits{B.W.}}:
\batitle{Tensor decompositions and applications}.
\bjtitle{SIAM review}
\bvolume{51}(\bissue{3}),
\bfpage{455}--\blpage{500}
(\byear{2009})
\doiurl{10.1137/07070111}
\end{barticle}
\endbibitem

\bibitem[\protect\citeauthoryear{Khoromskij}{2011}]{khoromskij2011d}
\begin{barticle}
\bauthor{\bsnm{Khoromskij}, \binits{B.N.}}:
\batitle{O (d log n)-quantics approximation of n-d tensors in high-dimensional numerical modeling}.
\bjtitle{Constructive Approximation}
\bvolume{34}(\bissue{2}),
\bfpage{257}--\blpage{280}
(\byear{2011})
\doiurl{10.1007/s00365-011-9131-1}
\end{barticle}
\endbibitem

\bibitem[\protect\citeauthoryear{Khoromskij}{2012}]{KHOROMSKIJ20121}
\begin{barticle}
\bauthor{\bsnm{Khoromskij}, \binits{B.N.}}:
\batitle{Tensors-structured numerical methods in scientific computing: Survey on recent advances}.
\bjtitle{Chemometrics and Intelligent Laboratory Systems}
\bvolume{110}(\bissue{1}),
\bfpage{1}--\blpage{19}
(\byear{2012})
\doiurl{10.1016/j.chemolab.2011.09.001}
\end{barticle}
\endbibitem

\bibitem[\protect\citeauthoryear{Knyazev}{2001}]{knyazev2001toward}
\begin{barticle}
\bauthor{\bsnm{Knyazev}, \binits{A.V.}}:
\batitle{Toward the optimal preconditioned eigensolver: Locally optimal block preconditioned conjugate gradient method}.
\bjtitle{SIAM journal on scientific computing}
\bvolume{23}(\bissue{2}),
\bfpage{517}--\blpage{541}
(\byear{2001})
\doiurl{10.1137/S1064827500366124}
\end{barticle}
\endbibitem

\bibitem[\protect\citeauthoryear{Krumnow et~al.}{2021}]{krumnow2021computing}
\begin{barticle}
\bauthor{\bsnm{Krumnow}, \binits{C.}},
\bauthor{\bsnm{Pfeffer}, \binits{M.}},
\bauthor{\bsnm{Uschmajew}, \binits{A.}}:
\batitle{Computing eigenspaces with low rank constraints}.
\bjtitle{SIAM Journal on Scientific Computing}
\bvolume{43}(\bissue{1}),
\bfpage{586}--\blpage{608}
(\byear{2021})
\doiurl{10.1137/19M130838}
\end{barticle}
\endbibitem

\bibitem[\protect\citeauthoryear{Kressner et~al.}{2014}]{kressner2014low}
\begin{barticle}
\bauthor{\bsnm{Kressner}, \binits{D.}},
\bauthor{\bsnm{Steinlechner}, \binits{M.}},
\bauthor{\bsnm{Vandereycken}, \binits{B.}}:
\batitle{Low-rank tensor completion by riemannian optimization}.
\bjtitle{BIT Numerical Mathematics}
\bvolume{54},
\bfpage{447}--\blpage{468}
(\byear{2014})
\doiurl{10.1007/s10543-013-0455-z}
\end{barticle}
\endbibitem

\bibitem[\protect\citeauthoryear{Kressner et~al.}{2016}]{kressner2016preconditioned}
\begin{barticle}
\bauthor{\bsnm{Kressner}, \binits{D.}},
\bauthor{\bsnm{Steinlechner}, \binits{M.}},
\bauthor{\bsnm{Vandereycken}, \binits{B.}}:
\batitle{Preconditioned low-rank riemannian optimization for linear systems with tensor product structure}.
\bjtitle{SIAM Journal on Scientific Computing}
\bvolume{38}(\bissue{4}),
\bfpage{2018}--\blpage{2044}
(\byear{2016})
\doiurl{10.1137/15M1032909}
\end{barticle}
\endbibitem

\bibitem[\protect\citeauthoryear{Lee}{2012}]{lee2012transversality}
\begin{bchapter}
\bauthor{\bsnm{Lee}, \binits{J.M.}}:
\bctitle{Sard's theorem and transversality}.
In: \bbtitle{Smooth Manifolds},
pp. \bfpage{143}--\blpage{147}.
\bpublisher{Springer},
\blocation{New York, NY}
(\byear{2012}).
\doiurl{10.1007/978-1-4419-9982-5}
\end{bchapter}
\endbibitem

\bibitem[\protect\citeauthoryear{Lubich et~al.}{2015}]{lubich2015time}
\begin{barticle}
\bauthor{\bsnm{Lubich}, \binits{C.}},
\bauthor{\bsnm{Oseledets}, \binits{I.V.}},
\bauthor{\bsnm{Vandereycken}, \binits{B.}}:
\batitle{Time integration of tensor trains}.
\bjtitle{SIAM Journal on Numerical Analysis}
\bvolume{53}(\bissue{2}),
\bfpage{917}--\blpage{941}
(\byear{2015})
\doiurl{10.1137/140976546}
\end{barticle}
\endbibitem

\bibitem[\protect\citeauthoryear{Novikov and Belousov}{2021}]{ttax2020}
\begin{botherref}
\oauthor{\bsnm{Novikov}, \binits{A.}},
\oauthor{\bsnm{Belousov}, \binits{D.}}:
TTAX: Tensor-Train toolbox on Jax
(2021).
\url{https://ttax.readthedocs.io/en/latest/index.html}
\end{botherref}
\endbibitem

\bibitem[\protect\citeauthoryear{Novikov et~al.}{2017}]{t3f2017}
\begin{botherref}
\oauthor{\bsnm{Novikov}, \binits{A.}},
\oauthor{\bsnm{Belousov}, \binits{D.}},
\oauthor{\bsnm{Izmailov}, \binits{P.}},
\oauthor{\bsnm{Oseledets}, \binits{I.}},
\oauthor{\bsnm{Figurnov}, \binits{M.}},
\oauthor{\bsnm{Khrulkov}, \binits{V.}}:
t3f - library for working with Tensor Train decomposition built on top of TensorFlow
(2017).
\url{https://t3f.readthedocs.io/en/latest/index.html}
\end{botherref}
\endbibitem

\bibitem[\protect\citeauthoryear{Novikov et~al.}{2022}]{novikov2022automatic}
\begin{barticle}
\bauthor{\bsnm{Novikov}, \binits{A.}},
\bauthor{\bsnm{Rakhuba}, \binits{M.}},
\bauthor{\bsnm{Oseledets}, \binits{I.}}:
\batitle{Automatic differentiation for riemannian optimization on low-rank matrix and tensor-train manifolds}.
\bjtitle{SIAM Journal on Scientific Computing}
\bvolume{44}(\bissue{2}),
\bfpage{843}--\blpage{869}
(\byear{2022})
\doiurl{10.1137/20M1356774}
\end{barticle}
\endbibitem

\bibitem[\protect\citeauthoryear{Nickel et~al.}{2011}]{nickel2011three}
\begin{bchapter}
\bauthor{\bsnm{Nickel}, \binits{M.}},
\bauthor{\bsnm{Tresp}, \binits{V.}},
\bauthor{\bsnm{Kriegel}, \binits{H.-P.}}, \betal:
\bctitle{A three-way model for collective learning on multi-relational data.}
In: \bbtitle{Icml},
vol. \bseriesno{11},
pp. \bfpage{3104482}--\blpage{3104584}
(\byear{2011})
\end{bchapter}
\endbibitem

\bibitem[\protect\citeauthoryear{Oseledets and Khoromskij}{2010}]{dmrgqtt}
\begin{botherref}
\oauthor{\bsnm{Oseledets}, \binits{I.V.}},
\oauthor{\bsnm{Khoromskij}, \binits{B.N.}}:
{DMRG+QTT approach to computation of the ground state for the molecular Schrödinger operator}.
{Preprint},
MPI MIS
(2010)
\end{botherref}
\endbibitem

\bibitem[\protect\citeauthoryear{Obukhov et~al.}{2020}]{obukhov2020t}
\begin{bchapter}
\bauthor{\bsnm{Obukhov}, \binits{A.}},
\bauthor{\bsnm{Rakhuba}, \binits{M.}},
\bauthor{\bsnm{Georgoulis}, \binits{S.}},
\bauthor{\bsnm{Kanakis}, \binits{M.}},
\bauthor{\bsnm{Dai}, \binits{D.}},
\bauthor{\bsnm{Van~Gool}, \binits{L.}}:
\bctitle{T-basis: a compact representation for neural networks}.
In: \beditor{\bsnm{III}, \binits{H.D.}},
\beditor{\bsnm{Singh}, \binits{A.}} (eds.)
\bbtitle{Proceedings of the 37th International Conference on Machine Learning}.
\bsertitle{Proceedings of Machine Learning Research},
vol. \bseriesno{119},
pp. \bfpage{7392}--\blpage{7404}.
\bpublisher{PMLR},
\blocation{New York, NY}
(\byear{2020}).
\burl{https://proceedings.mlr.press/v119/obukhov20a.html}
\end{bchapter}
\endbibitem

\bibitem[\protect\citeauthoryear{Oseledets}{2010}]{approxmattt}
\begin{barticle}
\bauthor{\bsnm{Oseledets}, \binits{I.V.}}:
\batitle{Approximation of $2^d\times2^d$ matrices using tensor decomposition}.
\bjtitle{SIAM Journal on Matrix Analysis and Applications}
\bvolume{31}(\bissue{4}),
\bfpage{2130}--\blpage{2145}
(\byear{2010})
\doiurl{10.1137/090757861}
\end{barticle}
\endbibitem

\bibitem[\protect\citeauthoryear{Oseledets}{2011}]{oseledets2011tensor}
\begin{barticle}
\bauthor{\bsnm{Oseledets}, \binits{I.V.}}:
\batitle{Tensor-train decomposition}.
\bjtitle{SIAM Journal on Scientific Computing}
\bvolume{33}(\bissue{5}),
\bfpage{2295}--\blpage{2317}
(\byear{2011})
\doiurl{10.1137/090752286}
\end{barticle}
\endbibitem

\bibitem[\protect\citeauthoryear{Oseledets}{2012}]{ttpy2012}
\begin{botherref}
\oauthor{\bsnm{Oseledets}, \binits{I.}}:
ttpy: Python implementation of the TT-Toolbox
(2012).
\url{https://github.com/oseledets/ttpy}
\end{botherref}
\endbibitem

\bibitem[\protect\citeauthoryear{Oseledets and Tyrtyshnikov}{2009}]{oseledets2009breaking}
\begin{barticle}
\bauthor{\bsnm{Oseledets}, \binits{I.V.}},
\bauthor{\bsnm{Tyrtyshnikov}, \binits{E.E.}}:
\batitle{Breaking the curse of dimensionality, or how to use svd in many dimensions}.
\bjtitle{SIAM Journal on Scientific Computing}
\bvolume{31}(\bissue{5}),
\bfpage{3744}--\blpage{3759}
(\byear{2009})
\doiurl{10.1137/090748330}
\end{barticle}
\endbibitem

\bibitem[\protect\citeauthoryear{Oseledets and Tyrtyshnikov}{2010}]{oseledets2010tt}
\begin{barticle}
\bauthor{\bsnm{Oseledets}, \binits{I.}},
\bauthor{\bsnm{Tyrtyshnikov}, \binits{E.}}:
\batitle{Tt-cross approximation for multidimensional arrays}.
\bjtitle{Linear Algebra and its Applications}
\bvolume{432}(\bissue{1}),
\bfpage{70}--\blpage{88}
(\byear{2010})
\doiurl{10.1016/j.laa.2009.07.024}
\end{barticle}
\endbibitem

\bibitem[\protect\citeauthoryear{Peshekhonov et~al.}{2024}]{peshekhonov2024training}
\begin{bchapter}
\bauthor{\bsnm{Peshekhonov}, \binits{I.}},
\bauthor{\bsnm{Arzhantsev}, \binits{A.}},
\bauthor{\bsnm{Rakhuba}, \binits{M.}}:
\bctitle{Training a {T}ucker model with shared factors: a {R}iemannian optimization approach}.
In: \beditor{\bsnm{Dasgupta}, \binits{S.}},
\beditor{\bsnm{Mandt}, \binits{S.}},
\beditor{\bsnm{Li}, \binits{Y.}} (eds.)
\bbtitle{Proceedings of The 27th International Conference on Artificial Intelligence and Statistics}.
\bsertitle{Proceedings of Machine Learning Research},
vol. \bseriesno{238},
pp. \bfpage{3304}--\blpage{3312}.
\bpublisher{PMLR},
\blocation{New York, NY}
(\byear{2024}).
\burl{https://proceedings.mlr.press/v238/peshekhonov24a.html}
\end{bchapter}
\endbibitem

\bibitem[\protect\citeauthoryear{Pollack}{1974}]{pollack1974differential}
\begin{barticle}
\bauthor{\bsnm{Pollack}, \binits{V.G.A.}}:
\batitle{Differential topology prentice-hall}.
\bjtitle{Inc., Englewood Cliffs, New Jersey}
(\byear{1974})
\doiurl{10.1090/chel/370}
\end{barticle}
\endbibitem

\bibitem[\protect\citeauthoryear{Rakhuba et~al.}{2019}]{rakhuba2019low}
\begin{barticle}
\bauthor{\bsnm{Rakhuba}, \binits{M.}},
\bauthor{\bsnm{Novikov}, \binits{A.}},
\bauthor{\bsnm{Oseledets}, \binits{I.}}:
\batitle{Low-rank riemannian eigensolver for high-dimensional hamiltonians}.
\bjtitle{Journal of Computational Physics}
\bvolume{396},
\bfpage{718}--\blpage{737}
(\byear{2019})
\doiurl{10.1016/j.jcp.2019.07.003}
\end{barticle}
\endbibitem

\bibitem[\protect\citeauthoryear{Rakhuba and Oseledets}{2018}]{rakhuba2018jacobi}
\begin{barticle}
\bauthor{\bsnm{Rakhuba}, \binits{M.}},
\bauthor{\bsnm{Oseledets}, \binits{I.V.}}:
\batitle{Jacobi--davidson method on low-rank matrix manifolds}.
\bjtitle{SIAM Journal on Scientific Computing}
\bvolume{40}(\bissue{2}),
\bfpage{1149}--\blpage{1170}
(\byear{2018})
\doiurl{10.1137/17M1123080}
\end{barticle}
\endbibitem

\bibitem[\protect\citeauthoryear{Robbin and Salamon}{2011}]{robbin2011introduction}
\begin{barticle}
\bauthor{\bsnm{Robbin}, \binits{J.W.}},
\bauthor{\bsnm{Salamon}, \binits{D.A.}}:
\batitle{Introduction to differential geometry}.
\bjtitle{ETH, Lecture Notes, preliminary version}
\bvolume{18},
\bfpage{52}
(\byear{2011})
\doiurl{10.1007/978-3-662-64340-2}
\end{barticle}
\endbibitem

\bibitem[\protect\citeauthoryear{Steinlechner}{2016}]{steinlechner2016riemannian}
\begin{botherref}
\oauthor{\bsnm{Steinlechner}, \binits{M.M.}}:
Riemannian optimization for solving high-dimensional problems with low-rank tensor structure.
PhD thesis,
EPFL
(2016).
\doiurl{10.1137/15M1010506}
\end{botherref}
\endbibitem

\bibitem[\protect\citeauthoryear{Tucker}{1963}]{tucker1963implications}
\begin{barticle}
\bauthor{\bsnm{Tucker}, \binits{L.R.}}:
\batitle{Implications of factor analysis of three-way matrices for measurement of change}.
\bjtitle{Problems in measuring change}
\bvolume{15}(\bissue{122-137}),
\bfpage{3}
(\byear{1963})
\doiurl{10.1108/09534810210423008}
\end{barticle}
\endbibitem

\bibitem[\protect\citeauthoryear{Usvyatsov et~al.}{2022}]{tntorch}
\begin{barticle}
\bauthor{\bsnm{Usvyatsov}, \binits{M.}},
\bauthor{\bsnm{Ballester-Ripoll}, \binits{R.}},
\bauthor{\bsnm{Schindler}, \binits{K.}}:
\batitle{tntorch: Tensor network learning with {PyTorch}}.
\bjtitle{Journal of Machine Learning Research}
\bvolume{23}(\bissue{208}),
\bfpage{1}--\blpage{6}
(\byear{2022})
\end{barticle}
\endbibitem

\bibitem[\protect\citeauthoryear{Uschmajew and Vandereycken}{2013}]{uschmajew2013geometry}
\begin{barticle}
\bauthor{\bsnm{Uschmajew}, \binits{A.}},
\bauthor{\bsnm{Vandereycken}, \binits{B.}}:
\batitle{The geometry of algorithms using hierarchical tensors}.
\bjtitle{Linear Algebra and its Applications}
\bvolume{439}(\bissue{1}),
\bfpage{133}--\blpage{166}
(\byear{2013})
\doiurl{10.1016/j.laa.2013.03.016}
\end{barticle}
\endbibitem

\bibitem[\protect\citeauthoryear{Zhao et~al.}{2016}]{zhao2016tensor}
\begin{barticle}
\bauthor{\bsnm{Zhao}, \binits{Q.}},
\bauthor{\bsnm{Zhou}, \binits{G.}},
\bauthor{\bsnm{Xie}, \binits{S.}},
\bauthor{\bsnm{Zhang}, \binits{L.}},
\bauthor{\bsnm{Cichocki}, \binits{A.}}:
\batitle{Tensor ring decomposition}.
\bjtitle{arXiv preprint arXiv:1606.05535}
(\byear{2016})
\doiurl{10.48550/arXiv.1606.05535}
\end{barticle}
\endbibitem

\end{thebibliography}
\end{document}